\documentclass[10pt]{amsart}

\usepackage{amssymb,amsthm,amsfonts,latexsym}
\usepackage{amsmath}
\usepackage{mathrsfs}
\usepackage{stmaryrd}
\usepackage{accents}
\usepackage[utf8]{inputenc}
\usepackage{color}
\usepackage{enumitem}
\usepackage{tabularx}
\usepackage{hyperref}

\pagestyle{plain}

\input{xy}
\xyoption{all}
\xyoption{poly}
\usepackage[all]{xy}
\setlength{\textwidth}{15cm}
\setlength{\topmargin}{0cm}
\setlength{\oddsidemargin}{.5cm}
\setlength{\evensidemargin}{.5cm}
\setlength{\textheight}{21.5cm}
\allowdisplaybreaks

%%%%%%%%%
 
\theoremstyle{plain}
\newtheorem{theorem}{Theorem}[section]
\newtheorem{lemma}[theorem]{Lemma}
\newtheorem{corollary}[theorem]{Corollary}
\newtheorem{proposition}[theorem]{Proposition}

\newtheorem*{theorem*}{Theorem}
\newtheorem*{corollary*}{Corollary}

\theoremstyle{definition}
\newtheorem{example}[theorem]{Example}
\newtheorem{definition}[theorem]{Definition}

\newtheorem{remark}[theorem]{Remark}

\newcommand{\donerk}{\hfill $\diamondsuit$ \smallskip }

\numberwithin{equation}{section}

 % BB letters
 \renewcommand\AA{{\mathbb{A}}}
 
 \newcommand\QQ{\mathbb{Q}}
 \renewcommand\SS{{\mathbb{S}}}

 % Cal letters
 \def\A{{\mathcal A}}
 \def\B{{\mathcal B}}
 
 \def\D{{\mathcal D}}

 \def\K{{\mathcal K}}

 \def\N{{\mathcal N}}
 \def\O{{\mathcal O}}
 \def\P{{\mathcal P}}

 \def\S{{\mathcal S}}

 % Sporadic groups

 \DeclareMathOperator{\HS}{HS}

 % Classical groups

 % Lie-type groups 
 \DeclareMathOperator{\Sz}{Sz} 
 \DeclareMathOperator{\Ree}{Ree}
 
 % Automorphisms
 \DeclareMathOperator{\Aut}{Aut}
 
 \DeclareMathOperator{\Out}{Out}
 \DeclareMathOperator{\Inn}{Inn}

 % Outer poset and image poset
 \DeclareMathOperator{\Outposet}{I}
 \newcommand{\Imageposet}[2]{{\mathcal{A}_{#1, #2}}}
 
 % Others
 \DeclareMathOperator{\Syl}{Syl}

 \DeclareMathOperator{\Id}{Id}
  
 \DeclareMathOperator{\Lk}{Lk}
 \DeclareMathOperator{\St}{St}
 
 % Join
 \def\join{*}

\newcommand{\normal}{\trianglelefteq}
\def\groupiso{\cong}

\newcommand{\tq}{\mathrel{{\ensuremath{\: : \: }}}}

\def\Im{\mathrm{Im}}

% Espacio generador por []
\newcommand\gen[1]{\left\langle#1\right\rangle}

   % The following mysterious maneuver gets rid of AMS junk at the top
   % and bottom of the first page.
   
      \makeatletter
      \def\@setcopyright{}
      \def\serieslogo@{}
      \makeatother
   
% This ends the preamble.  We now proceed to the document itself.

\begin{document}

\title [Eliminating components in Quillen's Conjecture]
       {Eliminating components in Quillen's Conjecture}

%\date{ drafted~9nov2020; PRELIMINARY version~segev.6.sds as of 24jan2021 }

\author{ Kevin Iv\'{a}n Piterman* \\
          Departamento de Matem\'{a}tica \\
	  IMAS-CONICET, FCEyN \\
	  Universidad de Buenos Aires \\
	  Buenos Aires  ARGENTINA \\  
	  e-mail: { \tt kpiterman@dm.uba.ar } \\ 
	       \\ 
        Stephen D. Smith \\ 
         Department of Mathematics  \\
         University of Illinois at Chicago \\
         Chicago Illinois USA \\
	 (home: 728 Wisconsin, Oak Park IL 60304 USA) \\
	 e-mail: {\tt smiths@math.uic.edu}       \\
       }

\thanks{*Supported by a CONICET postdoctoral fellowship and grants PIP 11220170100357, PICT 2017-2997, and UBACYT 20020160100081BA}

% abstract 
\begin{abstract}
We generalize an earlier result of Segev,
which shows that {\em some\/} component in a minimal counterexample to Quillen's conjecture must admit an outer automorphism.
We show in fact that {\em every\/} component must admit an outer automorphism.
Thus we transform his restriction-result on components to an elimination-result:
namely one which excludes any component which does not admit an outer automorphism.
Indeed we show that the outer automorphisms admitted must include~$p$-outers:
that is, outer automorphisms of order divisible by~$p$.
This gives stronger, concrete eliminations:
for example if~$p$ is odd, it eliminates sporadic and alternating components---thus
reducing to Lie-type components (and typically forcing~$p$-outers of field type).
For~$p = 2$, we obtain similar but less restrictive results.
We also provide some tools to help eliminate suitable components that do admit~$p$-outers in a minimal counterexample.

% on the kernel on components, new tools and results towards Quillen's conjecture.
%Segev's theorem establishes Quillen's conjecture under suitable conditions in all the components of the group.
%In this article, we focus on the behavior of a single component $L$ of a group $G$ to establish Quillen's conjecture,
%by studying the map $\A_p(L)\to \A_p(\Aut_G(L))$ and the centralizers of outer automorphisms of $L$.
%Recall that $\A_p(G)$ is the poset of nontrivial elementary abelian $p$-subgroups of $G$.
%We provide useful criteria to decide when we can propagate homology from $\A_p(L)$ to $\A_p(G)$ if $L$ is a component of $G$,
%by looking into the centralizers in $L$.
%As a corollary, we show that in a group $G$ of minimal order subject to failing Quillen's conjecture,
%every component $L\leq G$ must admit a nontrivial elementary abelian $p$-subgroup $E\leq G$ inducing outer automorphism on $L$.
%In particular,
%it shows that if $p$ is odd then a minimal order counterexample to the conjecture has neither alternating nor sporadic components.
%For $p = 2$ we obtain similar, but less restrictive, conclusions on the structure of the components.
\end{abstract}

\subjclass[2010]{20J05, 20D05, 20D25, 20D30, 05E18, 06A11.}

\keywords{$p$-subgroups, Quillen's conjecture, posets, finite groups.}

\maketitle

\tableofcontents

\centerline{
    THIS PAPER IS DEDICATED TO THE MEMORY OF JAN SAXL
            }

\section{Introduction}

This paper began life as a sequel to Segev's article
``Quillen's conjecture and the kernel on components"~\cite{Segev}; and expanded from there.
In brief summary:

\bigskip

One consequence of Segev's main result---essentially the contrapositive
of the final remark on~$\Out_G(L)$ in Theorem~\ref{theoremSegevOriginal} below---can be stated in the form:

\begin{corollary}[{Segev}]
\label{corollarySegev}
If~$G$ is a minimal counterexample to Quillen's conjecture, then~$G$ induces outer automorphisms on {\em some\/} component.
\end{corollary}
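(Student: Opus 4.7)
My plan is to derive Corollary~\ref{corollarySegev} directly as the contrapositive of the concluding clause of Segev's Theorem~\ref{theoremSegevOriginal}. I would first isolate from that theorem its assertion about $\Out_G(L)$---presumably of the form ``for some component $L$ of $G$, $\Out_G(L) \neq 1$'' under the assumption that $G$ is a minimal counterexample to Quillen's conjecture. The corollary is then the same statement rephrased as ``$G$ induces outer automorphisms on some component,'' so no new argument is required beyond invoking Segev's result.

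Concretely, I would begin by supposing for contradiction that $G$ is a minimal counterexample to Quillen's conjecture but that $G$ induces only inner automorphisms on every component $L$; equivalently, for each such $L$ the conjugation map $G \to \Aut(L)$ has image contained in $\Inn(L) \cong L/Z(L)$, and $\Out_G(L) = 1$ for every component of $G$. Applying Theorem~\ref{theoremSegevOriginal} to $G$ would then contradict its final remark on $\Out_G(L)$, since that remark guarantees at least one component with $\Out_G(L) \neq 1$ in the minimal counterexample setting.

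The genuine work has already been carried out inside Segev's proof of Theorem~\ref{theoremSegevOriginal}---where the interplay between $F^*(G)$, the kernel of $G$'s action on the set of components, and the contractibility of the Quillen poset is exploited to force the appearance of an outer automorphism. Treating that theorem as a black box, the passage to the corollary is a one-line application of contraposition. The only potential obstacle is bookkeeping the quantifiers: one must confirm that the ``$\Out_G(L) \neq 1$'' clause in Segev's statement is genuinely existential over the components of $G$---matching the word ``some'' in the corollary---and is not qualified by further hypotheses that would have to be verified independently in our minimal counterexample.
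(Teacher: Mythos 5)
Your approach is exactly the one the paper intends: Corollary~\ref{corollarySegev} is stated as the contrapositive of the last clause of Theorem~\ref{theoremSegevOriginal}, and no independent argument is expected. However, the concern you raise at the end is a real one and should be resolved rather than merely flagged. Theorem~\ref{theoremSegevOriginal} carries the standing hypothesis $O_{p'}(G)=1$, so the contraposition is only available once you know a minimal counterexample $G$ has $O_{p'}(G)=1$. This is where minimality actually enters: a minimal counterexample automatically satisfies (H1), and Theorem~\ref{generalReduction} then says that a group satisfying (H1) with $O_{p'}(G)\neq 1$ already satisfies (H-QC). Hence $O_{p'}(G)=1$ for a minimal counterexample, and Theorem~\ref{theoremSegevOriginal} applies. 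Its final clause, read contrapositively, now yields a component $L$ for which $\Out_G(L)$ (indeed already $\Out_H(L)\leq\Out_G(L)$) fails to be a $p'$-group; in particular $\Out_G(L)\neq 1$, which is precisely the assertion that $G$ induces outer automorphisms on some component. One small correction to your reading: the final clause of Theorem~\ref{theoremSegevOriginal} is not a direct assertion of the form ``$\Out_G(L)\neq 1$ for some $L$,'' but a sufficient condition for (H-QC) in terms of $\Out_H(L)$ and $\Out_G(L)$ being $p'$-groups; only after contraposition and after the $O_{p'}(G)=1$ step do you obtain the existential statement.
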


\noindent
We call this kind of result on a counterexample to Quillen's conjecture a \textit{restriction} result,
since it restricts at least one component to have a certain Property~$P_G$. 
(Here, $P_G$ would be that the component admits an outer automorphism induced in~$G$.)

By contrast, a consequence of our extension of Segev's work can be stated in the related form:

\begin{corollary}
\label{corollaryMain}
If~$G$ is a minimal counterexample to Quillen's conjecture, then~$G$ induces outer automorphisms on {\em all\/} components.
\end{corollary}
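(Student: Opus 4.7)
The plan is to argue by contradiction via a minimal-counterexample reduction. Assume $G$ is a minimal counterexample to Quillen's conjecture and, contrary to the claim, that some component $L$ of $G$ satisfies $\Out_G(L) = 1$. Since every element of $G$ then acts on $L$ as conjugation by some element of $L$ itself, $G$ cannot permute $L$ with another component, so $L \trianglelefteq G$; and the embedding $N_G(L)/L C_G(L) \hookrightarrow \Out(L)$ collapsing forces $G = L \cdot C_G(L)$, a central product amalgamated along $Z(L) = L \cap C_G(L)$.

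The strategy is to show $H := C_G(L)$ is a strictly smaller counterexample to Quillen's conjecture, contradicting the minimality of $G$. First, $H \lneq G$, because $L$ is quasisimple (hence nonabelian) and so $L \not\subseteq C_G(L)$. Next, $O_p(H) = 1$: if $Q \trianglelefteq H$ is a normal $p$-subgroup, then $L$ normalizes $Q$ because $[L,H]=1$, so $Q \trianglelefteq LH = G$ and thus $Q \leq O_p(G) = 1$.

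The remaining assertion, that $\A_p(H)$ is contractible, is the crux. In the clean case $p \nmid |Z(L)|$, I would use that $Z(L) \leq O_{p'}(G)$ to get $\A_p(G) \simeq \A_p(G/Z(L))$, together with the direct-product decomposition $G/Z(L) = L/Z(L) \times H/Z(L)$; Quillen's join formula then yields $\A_p(G) \simeq \A_p(L) \ast \A_p(H)$. Since $\A_p(G)$ is contractible, one of the join factors must be contractible (by the K\"unneth formula and simple connectivity). If this factor is $\A_p(H)$, we reach the desired contradiction with minimality. If instead only $\A_p(L)$ is contractible, then by minimality of $G$ the conjecture is known for the proper subgroup $L$, so $O_p(L) \neq 1$; but $O_p(L)$ is characteristic in $L \trianglelefteq G$, hence normal in $G$, contradicting $O_p(G) = 1$.

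The main obstacle will be the case $p \mid |Z(L)|$, where the clean join decomposition of $\A_p(G)$ fails. Handling it will require a refined homotopy analysis of $\A_p$ for central products with $p$-amalgamation (in the spirit of Pulkus--Welker or the Aschbacher--Smith product-formula techniques), or an inductive reduction modulo the $p$-part of $Z(L)$ in which one must carefully check that the counterexample hypotheses $O_p = 1$ and $\A_p$-contractibility are preserved. Managing this $p$-central overlap is where the essential technical work of the argument is expected to concentrate; a secondary, milder difficulty is verifying that $C_G(L)$ is genuinely a \emph{counterexample} (and not merely a smaller group satisfying the conjecture vacuously), which should follow from a direct comparison of the $\A_p$-homology of $G$ and the join.
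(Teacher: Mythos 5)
Your argument breaks at the first step. From $\Out_G(L)=1$, i.e.\ $N_G(L)=L\,C_G(L)$, you conclude that $L\normal G$ and hence $G=L\,C_G(L)$. That inference is false: $\Out_G(L)$ only records the behaviour of elements of $N_G(L)$, and says nothing about the elements of $G\setminus N_G(L)$, which permute $L$ among its $G$-conjugates. Concretely, take $G=(L_1\times L_2)\rtimes C_2$ with $L_1\cong L_2$ quasisimple and $C_2$ swapping the factors: then $N_G(L_1)=L_1\times L_2=L_1C_G(L_1)$, so $\Out_G(L_1)=1$, yet $L_1$ is not normal. This is exactly the situation the paper must (and does) handle, via the local kernel $H=\bigcap_i N_G(L_i)$ and the Mayer--Vietoris/Segev machinery behind Theorem~\ref{theoremNormalCaseReduction}; Corollary~\ref{corollaryMain} is obtained there as the contrapositive of the last sentence of Theorem~\ref{theorem2} (``if $\Out_G(L)$ is a $p'$-group then (H-QC) holds''), which covers arbitrary orbit length $t$. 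Your construction only treats $t=1$.

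Two further observations. First, the ``main obstacle'' you flag, the case $p\mid|Z(L)|$, never arises in a minimal counterexample: once $O_p(G)=1$ one has $F(G)\le O_{p'}(G)$, so $Z(L)\le Z(E(G))\le Z\bigl(F(G)\bigr)$ is automatically a $p'$-group (cf.\ the discussion before Lemma~\ref{lemmaOpandp}); moreover, since $G=LC_G(L)$ in your setting, $Z(L)$ is even central in $G$, giving a clean poset isomorphism $\A_p(G)\cong\A_p(G/Z(L))$. So you devoted your contingency planning to a vacuous case while leaving the real gap unaddressed. Second, restricted to the subcase $L\normal G$, your route---central product, pass to $G/Z(L)$, apply Quillen's join formula, then K\"unneth to force one factor to have vanishing reduced homology, and finish by minimality and the Aschbacher--Kleidman almost-simple case---is a genuinely more elementary argument than the paper's, which even here routes through the Mayer--Vietoris decomposition of $\A_p(G)$ along $\N_G(H)$. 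But as written the proposal does not prove the Corollary.
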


\noindent
We call this kind of result an \textit{elimination} result, since it establishes
that every component in a minimal counterexample to Quillen's conjecture must have a certain Property~$P_G$---so that
a component with not-$P_G$ (here, one having no outers in~$G$) is eliminated from a minimal counterexample.

In fact, we can sharpen Corollary~\ref{corollaryMain}, to eliminate components with only $p'$-outers;
the following is essentially the contrapositive of the final statement about~$\Out_G(L)$ in Theorem~\ref{theorem2} below:

\begin{corollary}
\label{corollaryPOuter}
If~$G$ is a minimal counterexample to Quillen's conjecture,
then~$G$ induces outer automorphisms of order~$p$ on {\em all\/} components.
\end{corollary}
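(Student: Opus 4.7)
The plan is to deduce Corollary~\ref{corollaryPOuter} from the final clause of Theorem~\ref{theorem2} by contraposition; thus the task reduces to proving that clause: if $G$ is a minimal counterexample to Quillen's conjecture, then every component $L$ of $G$ has $\Out_G(L)$ of order divisible by~$p$.

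I would establish this in two stages. The first stage proves the weaker assertion of Corollary~\ref{corollaryMain}, namely that $\Out_G(L) \neq 1$ for every component $L$, by iterating Segev's argument~\cite{Segev}: Segev's Theorem~\ref{theoremSegevOriginal} produces a single distinguished component with non-trivial induced outer automorphism, and the task is to re-run his ``kernel on components'' analysis with each prescribed component $L_0$ in the distinguished role---so that the conclusion is localized at $L_0$. The second stage sharpens ``outer'' to ``$p$-outer''. Here I would proceed by contradiction: assume $\Out_G(L_0)$ is a non-trivial $p'$-group, let $E_0$ be the product of the $G$-conjugates of $L_0$, and pass to a normal subgroup $H \trianglelefteq G$ of $p'$-index---constructed via the preimage in $G$ of $O^{p'}\bigl(G/E_0 C_G(E_0)\bigr)$, or a close variant exploiting the $p'$-nature of $\Out_G(L_0)$. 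Quillen's fiber theorem for normal $p'$-extensions then gives $\mathcal{A}_p(H) \homotequiv \mathcal{A}_p(G)$ $G$-equivariantly, so $\mathcal{A}_p(H)$ is non-contractible; and $O_p(H) \leq O_p(G) = 1$. Provided $H < G$ strictly, this contradicts the minimality of~$G$.

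The main obstacle I anticipate is the first stage: extending Segev's single-component conclusion to every component. This requires identifying precisely what in Segev's ``kernel on components'' analysis pins down the distinguished component, and showing that his argument can be re-run with any prescribed component in that role---most likely via a refined Aschbacher--Smith style decomposition of the non-contractible part of $\mathcal{A}_p(G)$ that tracks each component's contribution separately. The $p$-outer sharpening in the second stage should then follow routinely, modulo the minor point of verifying that $G/E_0 C_G(E_0)$ has a non-trivial $p'$-quotient whenever $\Out_G(L_0) \neq 1$ is $p'$, so that the reduction subgroup~$H$ is indeed proper in~$G$.
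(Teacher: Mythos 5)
Your opening move is correct---Corollary~\ref{corollaryPOuter} is simply the contrapositive of the last clause of Theorem~\ref{theorem2}, combined with the fact that a minimal counterexample to~(H-QC) satisfies~(H1); that observation \emph{is} the paper's entire proof of the corollary, and nothing more is needed once Theorem~\ref{theorem2} is established. The gap lies in your attempt to re-derive that clause. Stage~2 hinges on the claim, which you flag as a ``minor point,'' that a nontrivial $p'$-group $\Out_G(L_0)$ forces $G$ to have a proper normal subgroup of $p'$-index. This is false: take $p=2$, $L_0 = \PSL_2(8)$ (so $\Out(L_0)\cong C_3$, generated by a field automorphism), let $a,b$ denote the field automorphisms of the two factors inside $\Aut(L_0)\wr C_2$, let $c$ be the coordinate swap, and set $G := (L_0\times L_0)\,\langle ab^{-1},c\rangle$. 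Then $\langle ab^{-1},c\rangle\cong \SS_3$, $C_G(L_0\times L_0)=1$, and $\Out_G(L_0)\cong C_3$ is a nontrivial $2'$-group; yet $G/(L_0\times L_0)\cong\SS_3$ has no nontrivial odd-order quotient, and $L_0\times L_0$ is generated by involutions, so $O^{2'}(G)=G$. Your proposed reduction subgroup $H$ is all of~$G$ and the descent step dissolves.

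The paper's treatment of the $p'$ case is of a quite different shape, requiring no passage to a smaller group. If $\Out_H(L)$ (or the larger $\Out_G(L)$) is a $p'$-group, then every $p$-subgroup of $\Aut_H(L)$ has trivial image in $\Out_H(L)$ and so lies in $\Inn(L)\cong L/Z(L)$; since $Z(L)$ is a $p'$-group (as $O_p(G)=1$), this gives $\A_p\bigl(\Aut_H(L)\bigr)=\A_p(L)$ as posets. The inclusion map demanded by the first clause of Theorem~\ref{theorem2} is then the identity, which is nonzero in homology since $\tilde{H}_*\bigl(\A_p(L)\bigr)\neq 0$ by Theorem~\ref{thm:HomologyCalA}; this verifies the hypothesis of Corollary~\ref{cor:genTheorem2Segev} and hence feeds the Mayer--Vietoris argument of Theorem~\ref{theoremNormalCaseReduction}. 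Your Stage~1---``re-run Segev's kernel-on-components analysis with the prescribed component $L_0$ in the distinguished role''---names the right target, but that is precisely where all the substance of the paper sits: the local kernel $H=\bigcap_i N_G(L_i)$ over the $G$-orbit of $L_0$, the image posets $\A_i$, the map $\psi_H$, and the modified join construction and Mayer--Vietoris sequence of Theorem~\ref{theoremNormalCaseReduction}. Without those ingredients, ``re-running the analysis'' is an intention rather than an argument.
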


\noindent
As a preview of our elimination-applications:
Assume $p$ is odd.
Then Corollary~\ref{corollaryPOuter} eliminates alternating and sporadic components~$L$, since they have~$\Out(L)$ a~$2$-group.
So by the Classification of Finite Simple Groups~(CFSG), all the components are of Lie-type.
These have~$\Out(L)$ given by diagonal$\backslash$field$\backslash$graph automorphisms.
But we can have diagonal$\backslash$graph automorphisms for only some~$p$; so ``mostly" we get just field automorphisms.

\bigskip
\noindent
In the remainder of this Introduction, we further expand on the above summary: 

\bigskip

We begin with some background on Quillen's Conjecture:
Let~$G$ be a finite group and~$p$ a prime dividing its order.
In~\cite{Qui78},
Quillen introduced the poset~$\A_p(G)$ of nontrivial elementary abelian~$p$-subgroups
and studied its homotopical and topological properties  via its order complex.
He showed that if~$G$ has a nontrivial normal~$p$-subgroup, then~$\A_p(G)$ is contractible;
and conjectured the converse: 
that if~$\A_p(G)$ is contractible, then~$G$ should have a nontrivial normal~$p$-subgroup.
Contrapositively, if the largest normal~$p$-subgroup of~$G$ is trivial, then~$\A_p(G)$ should not be contractible.
This is the well-known \textit{Quillen Conjecture}, which we abbreviate by~(QC).
The Conjecture remains open in general;
but there have been important advances, such as~\cite{AK90, AS93, KP20, Qui78, Segev}
(see~\cite[Ch.8]{Smi11} for a fuller historical discussion).

In this article, we restrict attention to the following stronger homology-version of~(QC);
recall that~$O_p(G)$ denotes the largest normal~$p$-subgroup of~$G$:

\vspace{0.2cm}
\begin{tabular}{cc}
(H-QC) & If $O_p(G) = 1$, then $\tilde{H}_*(\A_p(G),\QQ)\neq 0$.
\end{tabular}
\vspace{0.2cm}

\noindent
Here, $\tilde{H}_*(X,\QQ)$ denotes the rational homology of the finite poset~$X$, which is the homology of its order complex.
We will work with rational homology,
so in general we will drop the coefficient notation and write~$\tilde{H}_*(X)$ for~$\tilde{H}_*(X,\QQ)$.

We next indicate some existing~(H-QC)-results, as background to Segev's work:
Quillen established~(H-QC) for solvable groups, and groups of~$p$-rank at most~$2$.
Later, various authors extended~(H-QC) to~$p$-solvable groups (see~\cite[8.2.12]{Smi11}).
In this direction, in~\cite[1.6]{AS93} and~\cite{KP20} (cf.~Theorem~\ref{generalReduction} below),
it is proved that a minimal counterexample~$G$ to~(H-QC) has~$O_{p'}(G) =1$.%
\footnote{
   Recall that~$O_{p'}(G)$ denotes the largest normal~$p'$-subgroup of~$G$.
          }
It then follows that the generalized Fitting subgroup~$F^*(G)$ is the direct product of the components of~$G$---which
are simple,%
\footnote{
  Our convention is that that ``simple'' means  {\em non-abelian\/} simple.
          }
and of order divisible by~$p$ (see Lemma~\ref{lemmaOpandp} below).
The case where~$G$ itself is simple was included in the work of Aschbacher and Kleidman in~\cite{AK90}---who
established~(H-QC) for almost-simple groups (i.e., where~$F^*(G)$ is simple).
Subsequently Aschbacher and Smith in~\cite{AS93} obtained a result for fairly general~$G$---showing
that~(H-QC) holds for~$p > 5$ under suitable constraints on the components of~$G$ which are unitary groups.
Along the way, in~\cite[1.7]{AS93} it is shown
that every component in a minimal counterexample to~(H-QC) must fail the~QD-property
(``Quillen dimension'', defined at~\cite[p.474]{AS93})---notice
this is an elimination result, ruling out components with~QD.

\bigskip

In~\cite{Segev}, Segev worked with the kernel on components of~$G$.
This subgroup of~$G$ is the kernel of the conjugation action of~$G$ on its set of components, namely:
  \[ H := \bigcap_L N_G(L) . \]
Here the intersection runs over all the components~$L$ of~$G$, and~$N_G(L)$ denotes the normalizer of~$L$ in~$G$.
Segev established~(H-QC) under certain conditions on the kernel~$H$ on components, when~$O_{p'}(G) = 1$.
As indicated in the previous paragraph, $F^*(G)$ is the product of the components. 
In particular, his main result (stated as Theorem~\ref{theoremSegevOriginal} below)
gives~(H-QC) under the hypothesis that~$H = F^*(G)$---which
implies that the kernel~$H$ on components induces only {\em inner\/} automorphisms on each component:

\begin{theorem}[{Segev, \cite[Thms~2 \& 3]{Segev}}]
\label{theoremSegevOriginal}
Suppose that~$O_{p'}(G) = 1$.
Let~$H := \bigcap_{L} N_G(L)$ be the kernel on components.
If for each component~$L$ of~$G$, the inclusion map~$\A_p(L) \subseteq \A_p(\Aut_H(L))$ is not the zero map in homology,
then~$G$ satisfies~(H-QC).

In particular, if~$H=F^*(G)$, then~$G$ satisfies~(H-QC).

Indeed if~$\Out_H(L)$
or~$\Out_G(L)$ is a~$p'$-group for every component~$L$ of~$G$, then~$G$ has~(H-QC).
\end{theorem}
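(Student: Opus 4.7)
The plan is to deduce this final statement from the first (main) assertion of the theorem, by checking that the $p'$-outer hypothesis forces the homological nonvanishing condition $\A_p(L) \subseteq \A_p(\Aut_H(L))$ demanded for each component. As a trivial first reduction, because $H \leq G$ we have $\Aut_H(L) \leq \Aut_G(L)$ by restriction to $L$, and hence $\Out_H(L) \leq \Out_G(L)$; so if $\Out_G(L)$ is a $p'$-group then so is $\Out_H(L)$. We may therefore assume throughout that $\Out_H(L)$ is a $p'$-group for every component $L$.

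Under this assumption I claim that the posets $\A_p(L)$ and $\A_p(\Aut_H(L))$ actually coincide. Indeed, since $\Inn(L) \trianglelefteq \Aut_H(L)$ with quotient embedded in the $p'$-group $\Out_H(L)$, every $p$-subgroup of $\Aut_H(L)$ must project trivially to that quotient and so lie inside $\Inn(L)$. Because the component $L$ is nonabelian simple and thus centerless, the natural map $L \to \Inn(L)$ is an isomorphism, carrying $\A_p(L)$ bijectively onto $\A_p(\Inn(L)) = \A_p(\Aut_H(L))$. Consequently the ``inclusion'' appearing in the hypothesis of the first assertion is nothing other than the identity map on a common poset.

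It remains to verify that this identity is nonzero in rational homology, i.e.\ that $\tilde H_*(\A_p(L), \QQ) \neq 0$. The hypothesis $O_{p'}(G) = 1$ forces each component $L$ to have order divisible by $p$ (otherwise $L \leq O_{p'}(G)$), so $L$ is a nonabelian simple, hence almost simple, group of order divisible by $p$; the almost-simple case of (H-QC) due to Aschbacher--Kleidman \cite{AK90} then delivers the desired nonvanishing. The hypothesis of the first assertion now holds for every component, and we conclude that $G$ satisfies~(H-QC). Given this structure, the present deduction carries no real obstacle beyond what is already packaged in the first assertion itself (whose proof presumably rests on a join/K\"unneth analysis of $\A_p(F^*(G))$ together with a rational-coefficient averaging over the permutation action of $G/H$ on the set of components); the only subtle bookkeeping point is to keep $\Aut_H(L)$ uniformly identified with its image in $\Aut(L)$ (namely $N_H(L)/C_H(L)$), so that ``$p$-elements of $\Aut_H(L)$ lie in $\Inn(L)$'' yields an honest equality of posets rather than merely a homotopy equivalence.
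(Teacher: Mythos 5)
Your deduction of the final ``Indeed'' statement from the first assertion is correct, and it is essentially the same route the paper takes when it later derives the parallel final statements of Theorem~\ref{theorem2} (end of Section~\ref{sec:concreteconsqH1etc}): there too, the $p'$-outer hypothesis reduces the ``inclusion'' $\A_p(L) \subseteq \A_p(\Aut_H(L))$ to an identity map, and nonvanishing of $\tilde H_*(\A_p(L))$ is then supplied by Aschbacher--Kleidman. (Since Theorem~\ref{theoremSegevOriginal} is quoted from Segev, the paper does not reprove the main assertion; the only thing one can be asked to check is exactly the implication you checked.) Two small remarks: you should say explicitly that you are assuming $O_p(G)=1$ (the nontrivial case of (H-QC)), since that together with $O_{p'}(G)=1$ is what makes the components simple via Lemma~\ref{lemmaOpandp}; and your closing speculation that Segev's proof of the main assertion rests on ``rational-coefficient averaging over the permutation action of $G/H$'' is not accurate --- as Section~\ref{sectionDiscussionSegev} explains, it is a Mayer--Vietoris argument on the decomposition $\A_p(G) = \N_G(H) \cup (\A_p(G) - \A_p(H))$, with a K\"unneth join analysis entering only in constructing the comparison map $\psi : \A_p(H) \to X$.
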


\noindent
Here $\Aut_G(L) = N_G(L)/C_G(L)$, with~$C_G(L)$ the centralizer of~$L$ in~$G$;
also~$\Out_G(L) = \Aut_G(L)/L$.
%The equality $H = F^*(G)$ means that $\Aut_H(L) = L$ for every component $L$ of $G$.
We have~$\Aut_H(L) \leq \Aut_G(L)$ and~$\Out_H(L) \leq \Out_G(L)$.

\bigskip 

We will next indicate some ways in which we can weaken Segev's hypotheses.
Note that Segev's theorem requires a common behavior in all the components of~$G$; and also requires~$O_{p'}(G) = 1$.

In this article, we will show that we can instead focus on the behavior of a single component.
Therefore, we will convert Segev's restriction result into an elimination result---as
we had indicated earlier, when we contrasted Corollaries~\ref{corollarySegev} and~\ref{corollaryMain}.

We will also drop the requirement that~$O_{p'}(G) = 1$.
Instead, we will often work under one of the following inductive hypotheses~(H1) or~(H$L(p)$)---which
will at least limit some aspects of behavior of the components.
Hypothesis~(H1) below is motivated by previous results~\cite{AS93,KP20}, with an eye to a minimal counterexample to~(H-QC):%
\footnote{
   Recall that a proper central quotient of~$G$ is a quotient by a nontrivial subgroup~$Z \leq Z(G)$,
   where~$Z(G)$ is the center of~$G$.
          }

\vspace{0.2cm}

\begin{tabular}{cc}
(H1) & Proper subgroups and proper central quotients of~$G$ satisfy~(H-QC).
\end{tabular}

\vspace{0.2cm}

\noindent
And indeed in the context of a counterexample of minimal order to~(H-QC), we do have the above hypothesis~(H1).
Hence, results proved under~(H1) will also hold under the minimal-counterexample variant of~(H1):
namely that~(H-QC) holds for all groups~$H$ such that~$|H|<|G|$.
For our other inductive hypothesis, let~$L$ be a component of~$G$, and~$L_1,\ldots,L_t$ its~$G$-orbit; define:

\vspace{0.2cm}

\begin{tabular}{cc}
(H$L(p)$) & $p$ divides the order of~$L$, and~$C_G(L_1 \ldots L_t)$ satisfies~(H-QC).
\end{tabular}

\vspace{0.2cm}

\noindent
In fact, later Remark~\ref{remarkH1andHLp} gives an implication-relation between our two hypotheses:
Namely under~(H1), $G$ satisfies either~(H-QC), or~(H$L(p)$) for every component~$L$ of~$G$.
So within the context of proving~(H-QC), the hypothesis~(H$L(p)$) is weaker than~(H1).
Hence in proving various results,
e.g.~our main~(H-QC)-result Theorem~\ref{theorem2} below (and Corollary~\ref{cor:genTheorem2Segev} which leads to it), 
we will be able to assume as hypothesis ``either~(H1), or~(H$L(p)$) for some~$L$''---and then it suffices 
to give the proof just under~(H$L(p)$) for that~$L$. 

\bigskip

Now we turn our attention away from hypotheses, towards sharpening Segev's conclusions---that is, to obtaining elimination-results.
Namely in Theorem~\ref{theorem2} below, we will see that, under~(H1), 
if there is some component~$L$ of~$G$ such that no elementary abelian~$p$-subgroup of~$G$ induces outer automorphisms on~$L$,
then~$G$ satisfies~(H-QC).

We then immediately obtain many of our particular elimination-results as consequences:
Recall we had mentioned that Corollary~\ref{corollaryPOuter} (and hence also Corollary~\ref{corollaryMain}) 
follows via the contrapositive of the final remark on~$\Out_G(L)$ in Theorem~\ref{theorem2}.

First assume~$p$ is odd.
Then using Corollary~\ref{corollaryPOuter},
we can eliminate alternating and sporadic components from a minimal counterexample to~(H-QC):
For (using e.g.~\cite[Secs~5.2,5.3]{GLS98}), if~$L = \AA_n$ is the alternating group on~$n$ letters,
then~$\Out(L) = C_2$ for~$n \neq 6$, and $\Out(\AA_6) = C_2\times C_2$;
while if~$L$ is a sporadic group, then~$\Out(L)\leq C_2$.
Therefore, if~$p$ is odd and~$L$ is a component of~$G$ of alternating or sporadic type, $\Out_G(L)$ is a~$p'$-group.
(This gives case~(1) below.)

Now assume $p=2$. 
Using Corollary~\ref{corollaryPOuter} as above,
we can also eliminate Suzuki (including~${^2}F_4$) and Ree components:
since their outer automorphisms are~$2'$-groups (using for example~\cite[Ch~4]{GLS98}).
(This gives case~(4) below.)

Furthermore we will also able, in the final two sections of the paper, to obtain elimination-consequences from various other results
(such as Proposition~\ref{prop:TrivialInclusionCentralizer}), which give us some tools
toward suitable conditions on the outer automorphisms of a simple group (and on the fixed points of those automorphisms)---in order
to then guarantee the hypotheses of Theorem~\ref{theorem2}, and hence establish~(H-QC) for~$G$.
Such results allow us to also eliminate, for example,
at least some alternating and sporadic components (see cases~(2) and~(3) below)
from a minimal counterexample to~(H-QC), in the present subcase~$p = 2$ as well.

We summarize our various concrete elimination-consequences of the above types in:

\begin{corollary}\label{corollaryComponents}
Assume~(H1).  Then~(H-QC) holds if~$p$ and~a component~$L$ of~$G$ satisfy one of:
\begin{enumerate}
\item $p$ is odd, and~$L$ is alternating or sporadic; or:
\end{enumerate}

We have~$p = 2$, with~$L$ given by one of:

\begin{enumerate}
\item[(2)] $L$ is sporadic~$\HS$.
\item[(3)] $L$ is alternating~$\AA_6$ or~$\AA_8$.
\item[(4)] $L$ has one of the Lie types~$\Sz(q)$, or~$^2F_4(q)$, or~$\Ree(q)$.
\end{enumerate}
\end{corollary}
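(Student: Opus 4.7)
The plan is to handle the four cases in two groups. Cases~(1) and~(4) should follow almost immediately from the contrapositive of the final clause of Theorem~\ref{theorem2} (which is exactly Corollary~\ref{corollaryPOuter}): if~$\Out_G(L)$ is a~$p'$-group for even a single component~$L$, then under~(H1) we conclude~(H-QC). So the proof reduces to an outer-automorphism bookkeeping using CFSG-data (e.g.~\cite[Secs~5.2, 5.3, Ch~4]{GLS98}).

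For case~(1) with~$p$ odd, I would simply note that~$\Out(\AA_n) = C_2$ for~$n \ne 6$, $\Out(\AA_6) = C_2 \times C_2$, and~$\Out(L) \le C_2$ whenever~$L$ is sporadic; so in each instance~$\Out(L)$ is a~$2$-group, whence~$\Out_G(L) \le \Out(L)$ is a~$p'$-group, and the conclusion follows. For case~(4) with~$p = 2$, the analogous observation is that the Suzuki groups~$\Sz(q)$ (with~$q = 2^{2n+1}$), the Ree~variants~${}^2F_4(q)$, and the groups~$\Ree(q)$ (with~$q = 3^{2n+1}$) all have cyclic outer-automorphism groups of odd order, so~$\Out_G(L)$ is a~$2'$-group and again Theorem~\ref{theorem2} applies.

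Cases~(2) and~(3) are more delicate, because~$\Out(\HS) = C_2$, $\Out(\AA_6) = C_2 \times C_2$, and~$\Out(\AA_8) = C_2$ all contain~$2$-outers; so the naive~$p'$-group argument used above is not available. Here I would instead invoke the finer tools developed in the last two sections (in particular Proposition~\ref{prop:TrivialInclusionCentralizer} and its relatives), whose whole purpose is to verify the hypothesis of Theorem~\ref{theorem2} that the inclusion~$\A_p(L) \subseteq \A_p(\Aut_H(L))$ is nonzero in homology, even when~$p$-outers are present. For each specific~$L \in \{\HS, \AA_6, \AA_8\}$ I would then run the verification by using the known structure of the (unique up to conjugacy) involutory outer automorphism---checking, for instance, that its fixed-point subgroup in~$L$ contains a suitable elementary abelian~$2$-subgroup whose class survives in the homology of~$\A_2(L)$---so that the hypothesis of Theorem~\ref{theorem2} (or of the relevant proposition) is satisfied and~(H-QC) follows.

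The routine step is case~(1) and case~(4): it is essentially just quoting~\cite{GLS98}. The main obstacle is cases~(2) and~(3), where one must actually produce, for each of~$\HS$, $\AA_6$, $\AA_8$, an explicit elementary abelian~$2$-subgroup~$E$ and a nonzero homology class in~$\A_2(L)$ supported on chains involving~$E$, whose image in~$\A_2(\Aut_H(L))$ remains nonzero under every possible action of the outer involution(s); this computation is where the specific feature of these three groups (rather than a uniform family) enters and has to be checked case-by-case using the tools of the last two sections.
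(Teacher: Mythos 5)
Your treatment of cases~(1) and~(4) is exactly the paper's: both follow from Corollary~\ref{corollaryPOuter} (the contrapositive of the last clause of Theorem~\ref{theorem2}) once one records from~\cite{GLS98} that~$\Out(L)$ is a~$2$-group for~$L$ alternating or sporadic, and a~$2'$-group for~$L$ of Suzuki, ${}^2F_4$, or Ree type. That part of your proposal is correct.

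Your sketch of cases~(2) and~(3) misstates the homological criterion and misses the key obstruction for~$\HS$. In Proposition~\ref{prop:TrivialInclusionCentralizer} the hypothesis is that for some~$k$ with~$\tilde{H}_k(\A_p(L)) \neq 0$, the map~$\tilde{H}_k(\A_p(C_L(E))) \to \tilde{H}_k(\A_p(L))$ is \emph{zero} for every~$p$-outer~$E$; you have it roughly reversed, asking for a class from the fixed-point subgroup to ``survive,'' whereas what is actually needed is that the fixed-point poset contributes \emph{nothing} in degree~$k$, so that the Mayer--Vietoris argument yields injectivity of~$\tilde{H}_k(\A_p(L)) \hookrightarrow \tilde{H}_k(\A)$. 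Your claim that the involutory outer automorphism is ``unique up to conjugacy'' is also wrong for both~$\HS$ (two classes,~$2C$ and~$2D$) and~$\AA_6$ (where~$\Out = C_2 \times C_2$). Most significantly, for~$\HS$ Proposition~\ref{prop:TrivialInclusionCentralizer} does \emph{not} apply directly: the~$2D$-centralizer~$\SS_8$ has~$\A_2(\SS_8)$ a wedge of~$2$-spheres, so the required degree-$2$ vanishing fails. The paper therefore gives a separate and substantially harder argument (Theorem~\ref{theoremHS}), showing~$\A_2(\HS) \to \A_2(\Aut(\HS))$ is a~$2$-equivalence and then using positivity of both Euler characteristics (computed in GAP) to conclude the degree-$2$ map is nonzero. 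Only~$\AA_6$ and~$\AA_8$ are handled by a direct application of Proposition~\ref{prop:TrivialInclusionCentralizer}, with~$k=1$ and~$k=2$ respectively; you would need to supply that distinction and the actual centralizer/dimension bookkeeping to complete the proof.
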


\noindent
Note furthermore that any simple~$L$ with~$\Out(L) = 1$
is similarly eliminated (for any~$p$) by Corollary~\ref{corollaryMain}. 
For example, the reader can consult~\cite[Table~5.3]{GLS98} for the~$14$ sporadic~$L$ with~$\Out(L) = 1$.

\bigskip

We conclude our Introduction with the promised statement of our main general theorem leading to elimination results:

\bigskip

\begin{theorem}
\label{theorem2}
Let~$L$ be a component of~$G$ and~$L_1,\ldots, L_t$ its~$G$-orbit; and set~$H := \bigcap_i N_G(L_i)$.
Suppose that~$G$ satisfies~(H1) or~(H$L(p)$).

If~$\A_p(L) \subseteq \A_p \bigl( \Aut_H(L) \bigr)$ is not the zero map in homology, then~$G$ satisfies~(H-QC).

In particular, if we have~$H = L_1\ldots L_tC_G(L_1\ldots L_t)$, then~$G$ satisfies~(H-QC).

Indeed, if~$\Out_H(L)$~or $\Out_G(L)$ is a~$p'$-group, then~(H-QC) holds.
\end{theorem}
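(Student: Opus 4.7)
The plan is to deduce (H-QC) for $G$ by exhibiting a nonzero class in $\tilde{H}_*(\A_p(G))$, working under the assumption $O_p(G) = 1$ (otherwise the conclusion of~(H-QC) is vacuous). First I would reduce to the inductive hypothesis (H$L(p)$): under (H1), the relation between (H1) and (H$L(p)$) promised in Remark~\ref{remarkH1andHLp} says that either $G$ already satisfies (H-QC), in which case we are done, or (H$L(p)$) holds for every component, and in particular for our chosen~$L$.

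Next I would reduce the second and third conclusions of the theorem to the first. For the third, if $\Out_H(L)$ is a $p'$-group (and the case $\Out_G(L)$ a $p'$-group subsumes it, since $\Out_H(L)\le\Out_G(L)$), then every elementary abelian $p$-subgroup of $\Aut_H(L)$ projects trivially to $\Out_H(L)$ and hence lies in $\Inn(L)\cong L$; so $\A_p(\Aut_H(L))=\A_p(L)$ as posets, and the inclusion is the identity. Since $L$ is simple with $p\mid|L|$, the Aschbacher--Kleidman result~\cite{AK90} gives $\tilde{H}_*(\A_p(L))\neq 0$, so the identity inclusion is nonzero in homology. For the second conclusion, if $H=L_1\cdots L_tC_G(L_1\cdots L_t)$, then $\Aut_H(L)=N_H(L)/C_H(L)\cong L$, reducing again to the identity case.

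For the main statement, set $M=L_1\cdots L_t$, the central product of the components in the $G$-orbit of~$L$. Since $M$ is $G$-invariant, so is $C_G(M)$, and then $O_p(C_G(M))$ is characteristic in $C_G(M)$ and hence normal in~$G$; the hypothesis $O_p(G)=1$ therefore forces $O_p(C_G(M))=1$. Hypothesis (H$L(p)$) now gives $\tilde{H}_*(\A_p(C_G(M)))\neq 0$. Separately, $\A_p(M)$ decomposes (up to the $p$-part of $Z(M)$) as the iterated join $\A_p(L_1)\join\cdots\join\A_p(L_t)$ of $t$ copies of $\A_p(L)$, so by the join/K\"unneth formula its rational homology is nonzero; and the further central product $M\cdot C_G(M)$ yields, again by the join formula, a nonzero class in $\tilde{H}_*(\A_p(M\cdot C_G(M)))$.

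The main obstacle, and where the hypothesis on $\A_p(L)\hookrightarrow\A_p(\Aut_H(L))$ is essential, is to promote this class from $M\cdot C_G(M)$ up to $G$ without it being killed by the outer automorphisms of the~$L_i$ induced in $H$, and then by the $G/H$-action permuting the orbit. For the first step, the given nonzero inclusion on~$L$ tensored across the $t$ isomorphic components and joined with the $C_G(M)$-class produces an $H$-invariant class surviving in $\A_p(\Aut_H(M)\cdot C_G(M))$ and hence in $\A_p(H)$; the precise bookkeeping should follow the scheme of Segev's proof of Theorem~\ref{theoremSegevOriginal} in~\cite{Segev}, with the crucial change that the nonvanishing inclusion hypothesis is only needed for the single component~$L$, because the remaining components are handled together via (H$L(p)$) applied to $C_G(M)$. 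For the second step, the inclusion $\A_p(H)\hookrightarrow\A_p(G)$ in rational homology is controlled by a standard transfer/averaging argument over the transitive action of $G/H$ on $\{L_1,\ldots,L_t\}$, which cannot annihilate a nonzero invariant class in characteristic zero. The hard part is to verify coherence of the join-construction across the $t$ components and the passage through $H\to G$; this is where Segev's technique must be refined to exploit~(H$L(p)$).
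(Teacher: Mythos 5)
Your reductions at the start are correct and match the paper's route: under (H1) Remark~\ref{remarkH1andHLp} lets you pass to (H$L(p)$); and both the ``In particular'' clause (where $H = L_1\cdots L_t C_G(L_1\cdots L_t)$ forces $\A_p(\Aut_H(L)) = \A_p(L)$ up to the natural identification) and the ``Indeed'' clause (where $\Out_H(L)$ a $p'$-group kills all $p$-outers) do reduce to the main statement, with Aschbacher--Kleidman supplying $\tilde{H}_*(\A_p(L))\neq 0$.

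There is, however, a genuine gap in the main argument, and it is concentrated in one sentence: the assertion that the inclusion $\A_p(H)\hookrightarrow\A_p(G)$ ``in rational homology is controlled by a standard transfer/averaging argument over the transitive action of $G/H$ \ldots which cannot annihilate a nonzero invariant class in characteristic zero.'' No such transfer exists. If a nonzero class in $\tilde{H}_*(\A_p(H))$ automatically survived to $\tilde{H}_*(\A_p(G))$ for a normal subgroup $H$, Quillen's conjecture would follow trivially whenever $G$ has any normal subgroup with nonzero $\A_p$-homology; this passage from $H$ to $G$ is precisely the hard part. For a concrete illustration of the failure, the inclusion $\A_2(\AA_5)\hookrightarrow\A_2(\SS_5)$ is zero in reduced homology, since the former has all its reduced homology in degree~$0$ but the latter is connected; the paper's own final example invokes exactly this phenomenon (and in that example $G/H$ is a $2$-group, so one cannot even retreat to a $p'$-index hypothesis). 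The mechanism the paper actually uses is a Mayer--Vietoris argument on the decomposition $\A_p(G) = Y\cup Z$ with $Y = \N_G(H)$ the inflation of $\A_p(H)$ and $Z = \A_p(G)-\A_p(H)$: the goal is not to show the inclusion $\A_p(H)\to\A_p(G)$ is nonzero, but to show that the map $\tilde{H}_*(Y_0)\to\tilde{H}_*(Y)$ from the boundary $Y_0 = Y\cap Z$ fails to be surjective, for which the map $\psi_H:\A_p(H)\to X$ of Theorem~\ref{theoremNormalCaseReduction} serves as a witness via conditions (C), (D), (E). Your ``step 1'' --- producing a nonzero class out of $\A_p(L_1)\join\cdots\join\A_p(L_t)\join\A_p(C_G(M))$ that detects the homology of $\A_p(H)$ --- is essentially the content of the paper's proof of Corollary~\ref{cor:genTheorem2Segev}, where one shows $\psi_H|_{\B}\neq 0$ for $\B = \A_p(H_0)$, $H_0 = C_H(N)L_1\cdots L_t$. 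But ``step 2'' cannot be waved away by a transfer; it is exactly what Theorem~\ref{theoremNormalCaseReduction} supplies.
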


\noindent
Note that the ``In particular" part of Theorem~\ref{theorem2} applies for {\em some\/} component of $G$
(in contrast to {\em each\/} component, in Segev's Theorem \ref{theoremSegevOriginal});
so that its contrapositive forces nontrivial~$\Out_G(L)$ for all~$L$ in a counterexample to~(H-QC).
Hence we do indeed have an elimination result.

In our Theorem~\ref{theorem2}, we in fact work with the \textit{local} kernel~$H := \bigcap_{i=1}^t N_G(L_i)$,
where $L_1,\ldots, L_t$ is the~$G$-orbit of the component~$L$.
And for this local kernel~$H$,
rather than requiring~$H = F^*(G)$ as in the ``In particular'' part of Segev's Theorem~\ref{theoremSegevOriginal},
we instead ask for the inner-only condition just on the orbit of~$L$,
namely~$H = L_1 \ldots L_t C_G(L_1\ldots L_t)$---which holds for example if~$\Out_H(L) = 1$.
The behavior of components in other~$G$-orbits is hidden in~$C_G(L_1 \ldots L_t)$;
and for this centralizer, we will be able to exploit the inductive hypotheses~(H1) or~(H$L(p)$).
Moreover, the original requirement~$O_{p'}(G) = 1$ in Segev's theorems
is relaxed to just the divisibility of the order of the component~$L$ by~$p$.

Our Theorem~\ref{theorem2} will in fact follow as a special case 
of the more general and technical Theorem~\ref{theoremNormalCaseReduction}---which has still-more-flexible hypotheses.

\section{Notation and preliminaries}

In this section, we establish some notation and recall some fundamental constructions on finite groups
that will be used throughout this article.
For more details on the assertions on finite posets and their homotopy properties in relation with their order complexes,
we refer the reader to~\cite{Qui78}.
For results on finite groups we refer to~\cite{AscFGT}.
We will follow the conventions of~\cite{GL83} for finite simple groups;
the reader should be aware that there may be minor notational variations from other standard sources.

All the posets and simplicial complex considered here are finite.
If~$X$ is a finite poset, then~$\K(X)$ denotes its order complex.
Recall that the simplices of~$\K(X)$ are the non-empty chains of~$X$.
We study the homotopy properties of~$X$ by means of its order complex.
If~$f : X \to Y$ is an order-preserving map between finite posets, then~$f$ induces a simplicial map~$f : \K(X) \to \K(Y)$.
If~$f,g : X \to Y$ are two order-preserving maps between finite posets and~$f \leq g$ (i.e.~$f(x) \leq g(x)$ for all~$x \in X$),
then the induced simplicial maps~$f,g : \K(X) \to \K(Y)$ are homotopic.
Write~$X \simeq Y$ for finite posets~$X,Y$ if their order complexes~$\K(X)$ and~$\K(Y)$ are homotopy equivalent.

We denote by~$X \join Y$ the join of the posets~$X$ and~$Y$.
The underlying set of this join is the disjoint union of~$X$ and~$Y$, and the order is given as follows.
We keep the given order in~$X$ and~$Y$, and we put~$x < y$ for~$x \in X$ and~$y \in Y$.
It can be shown that~$\K(X \join Y) = \K(X) \join \K(Y)$, where the latter join is the join of simplicial complexes.
Moreover, its geometric realization coincides with the classical join of topological spaces.
That is,~if $K,L$ are simplicial complexes and~$|K|$ denotes the geometric realization of~$K$,
then we have a homeomorphism~$|K \join L| \equiv |K| \join |L|$.
For more details on these join-properties, see~\cite{Qui78}.
If~$f :X_1 \to Y_1$ and~$g : X_2 \to Y_2$ are order-preserving maps,
then we have an induced map~$f*g : X_1*X_2 \to Y_1*Y_2$
defined by~$(f*g)(x) = f(x) \in Y_1$ if~$x \in X_1$, and~$(f*g)(x) = g(x) \in Y_2$ if~$x \in X_2$.

Below we recall a generalized version of Quillen's fiber lemma (cf.~\cite[Prop.~1.6]{Qui78}. See also \cite{BWW}).
Recall that an $n$-equivalence is a continuous function $f : X \to Y$
such that $f$ induces isomorphisms in the homotopy groups $f_* : \pi_i(X) \to \pi_i(Y)$ with $i<n$,
and an epimorphism in the $n$-th homotopy group.
By the Hurewicz theorem, an $n$-equivalence also induces isomorphisms in the homology groups of degree $< n$,
and an epimorphism in degree $n$.
The topological space $X$ is $n$-connected if its homotopy groups of degree at most $n$ are trivial
(and hence its homology groups of degree at most $n$ also vanish).
By convention, $(-1)$-connected just means non-empty, and every space is~$(-2)$-connected.

\begin{proposition}[{Quillen's fiber lemma}]\label{variantQuillenFiber}
Let~$f : X \to Y$ be a map between finite posets.
Assume~$n\geq 0$.
Suppose that for all $y\in Y$, $f^{-1}(Y_{\leq y})*Y_{>y}$ (resp. $f^{-1}(Y_{\geq y})*Y_{<y}$) is $(n-1)$-connected.
Then $f$ is an $n$-equivalence.

In particular, if for all $y\in G$, $f^{-1}(Y_{\leq y})*Y_{>y}$ (resp. $f^{-1}(Y_{\geq y})*Y_{<y}$) is contractible,
then $f$ is a homotopy equivalence.
\end{proposition}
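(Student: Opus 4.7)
The plan is to factor $f$ through a non-Hausdorff mapping cylinder and then establish a generalized fiber-connectivity criterion for the projection onto $Y$. Introduce the poset
\[
Z \;:=\; \{\,(x,y)\in X\times Y \tq f(x)\leq y\,\}
\]
ordered componentwise, the section $s\co X\to Z$ defined by $s(x):=(x,f(x))$, and the two projections $\pi_X\co Z\to X$ and $\pi_Y\co Z\to Y$. These are order-preserving, and they satisfy $f=\pi_Y\circ s$, $\pi_X\circ s=\id_X$, and $s\circ \pi_X\leq \id_Z$ (the last because $(x,f(x))\leq (x,y)$ in $Z$ whenever $f(x)\leq y$). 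The last inequality, combined with the homotopy statement for comparable order-preserving maps recalled above, makes $s$ and $\pi_X$ into mutually inverse homotopy equivalences. It thus suffices to show that $\pi_Y$ is an $n$-equivalence.

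The hypothesis rephrases cleanly in terms of $\pi_Y$. For each $y\in Y$, the preimage
\[
\pi_Y^{-1}(Y_{\leq y}) \;=\; \{\,(x,y')\tq f(x)\leq y'\leq y\,\}
\]
carries the order-preserving self-map $(x,y')\mapsto (x,y)$, which is well-defined (since $f(x)\leq y'\leq y$) and pointwise $\geq \id$; its image is naturally identified with $f^{-1}(Y_{\leq y})$. Hence $\pi_Y^{-1}(Y_{\leq y})\homotequiv f^{-1}(Y_{\leq y})$, and since the join is homotopy-invariant, the given hypothesis is equivalent to the statement that $\pi_Y^{-1}(Y_{\leq y})*Y_{>y}$ is $(n-1)$-connected for every $y\in Y$.

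The main step, and the principal obstacle, is then the following ``generalized fiber lemma'' applied to $\pi_Y$: if $g\co W\to Y$ is an order-preserving map of finite posets with $g^{-1}(Y_{\leq y})*Y_{>y}$ being $(n-1)$-connected for every $y\in Y$, then $g$ is an $n$-equivalence. My plan is induction on $|Y|$; the base $|Y|=1$ is immediate since then $g=\id_W$ composed with a constant, and the hypothesis forces $W$ to be $(n-1)$-connected. For the inductive step, choose a maximal $y_0\in Y$, set $Y':=Y\setminus\{y_0\}$, $W':=g^{-1}(Y')$, $g':=g|_{W'}$. Since $y_0$ is maximal, $\K(Y)=\K(Y')\cup \St(y_0)$ with intersection the link $\K(Y_{<y_0})$, and $\K(W)$ decomposes analogously over $g^{-1}(y_0)$. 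The induction hypothesis applies to $g'$, and a Mayer--Vietoris / homotopy-pushout comparison of the two squares---using the join-connectivity formula $\mathrm{conn}(A*B)=\mathrm{conn}(A)+\mathrm{conn}(B)+2$---combines the inductive input with the hypothesis at $y=y_0$ to yield the required $n$-connectedness of the homotopy fiber of $g$. The delicate bookkeeping lies in checking, at each intermediate $y\in Y'$, that the hypothesis for $g$ at $y$ descends to the analogous hypothesis for $g'$ at $y$: here the factor $Y_{>y}$ in the hypothesis is crucial, since it absorbs precisely the suspension contributed by the star attachment at $y_0$. Finally, the dual statement, with $f^{-1}(Y_{\geq y})*Y_{<y}$ in place of $f^{-1}(Y_{\leq y})*Y_{>y}$, follows by applying the whole argument to the opposite posets $X^{\op}\to Y^{\op}$.
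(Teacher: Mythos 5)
The paper does not actually supply a proof of this proposition---it is recalled with citations to Quillen~\cite[Prop.~1.6]{Qui78} and to Bj\"orner--Wachs--Welker~\cite{BWW}---so your argument must stand on its own. The mapping-cylinder reduction in your first two paragraphs is correct as far as it goes: the verifications for $s$, $\pi_X$, $\pi_Y$, the relations $f=\pi_Y\circ s$, $\pi_X\circ s=\id_X$, $s\circ\pi_X\leq\id_Z$, and the identification $\pi_Y^{-1}(Y_{\leq y})\homotequiv f^{-1}(Y_{\leq y})$ are all sound. But notice that the reduction is circular: the ``generalized fiber lemma'' you then propose to prove for $\pi_Y$ is, word for word, the Proposition itself for an arbitrary poset map $g$, and your subsequent induction never exploits any special structure of $Z$ (for instance that the literal fibers $\pi_Y^{-1}(y)$, and not merely the preimages of downsets, are copies of the $f^{-1}(Y_{\leq y})$). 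So the cylinder step has not moved you any closer to a proof.

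The genuine gap is in the inductive step. Deleting a maximal element $y_0$ leaves $Y'_{\leq y}=Y_{\leq y}$ for every $y\in Y'$, so the fiber factor is unchanged; but $Y'_{>y}=Y_{>y}\setminus\{y_0\}$ whenever $y<y_0$, and the $(n-1)$-connectivity of $g^{-1}(Y_{\leq y})*Y_{>y}$ does \emph{not} imply $(n-1)$-connectivity of $g^{-1}(Y_{\leq y})*Y'_{>y}$. A concrete failure: take $y<y_0$ with $Y_{>y}=\{y_0\}$. Then $g^{-1}(Y_{\leq y})*Y_{>y}$ is a cone, hence contractible, so the hypothesis at $y$ is automatic and carries no information about $g^{-1}(Y_{\leq y})$ itself---yet the hypothesis you would need in order to invoke the inductive assumption for $g':W'\to Y'$ is precisely that $g^{-1}(Y_{\leq y})$ be $(n-1)$-connected. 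Your remark that the factor $Y_{>y}$ ``absorbs precisely the suspension contributed by the star attachment at $y_0$'' points in the wrong direction: removing $y_0$ from the join factor \emph{weakens} the connectivity hypothesis, it does not absorb anything. The same obstruction also blocks applying the induction to the intersection map $g^{-1}(Y_{<y_0})\to Y_{<y_0}$, which your gluing-lemma comparison equally requires. The proofs in the cited sources circumvent this by working with diagram-of-spaces / homotopy-colimit decompositions over the whole of $Y$, rather than a one-maximal-element-at-a-time induction on $|Y|$.
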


Let $X$ be a finite poset.
We denote by $\tilde{H}_*(X,R)$ the homology of $X$ with coefficients in the ring~$R$,
which is the homology of its order complex $\K(X)$.
In general we will work with $R = \QQ$ and we will just write $\tilde{H}_*(X)$.
If $f : X \to Y$ is an order-preserving map between finite posets, then we denote by~$f_*$ the map induced in homology.

If~$R$ is a field, by the K\"{u}nneth formulas,
the homology of a join of spaces is the tensor product of the homologies of the factors.
That is, we have that
\begin{equation}
\label{equationJoinHomology}
\begin{split}
\tilde{H}_*(X\join Y,R) & = \tilde{H}_*(X,R)\otimes_R \tilde{H}_*(Y,R),\\
\tilde{H}_n(X\join Y, R) & = \bigoplus_{i+j=n-1} \tilde{H}_i(X,R) \otimes_R \tilde{H}_j(Y,R).
\end{split}
\end{equation}
Note that we have a dimension shift in the join.
Roughly, it adds one degree of connectivity in the above sense.
For example, if $X,Y$ are non-empty (i.e. $(-1)$-connected), then $X*Y$ is path-connected (i.e. $0$-connected),
and if one of them is $0$-connected, then their join is simply connected (i.e. $1$-connected).
More generally, if $X$ is $n$-connected and $Y$ is $m$-connected, then~$X*Y$ is~$(n+m+2)$-connected.
For more details, see~\cite{Milnor}.
 
\bigskip

We now give some notation on finite groups, and recall some useful facts.
All the groups considered here are finite.
By a simple group we will mean a non-abelian simple group.
The alternating and symmetric group on $n$ letters are denoted by $\AA_n$ and $\SS_n$ respectively.
We also write $C_n$ and $D_n$ for the cyclic group of order $n$ and the dihedral group of order $n$, respectively.

For subgroups $H,K\leq G$, we denote by $N_K(H)$ the normalizer of $H$ in $K$, and by $C_K(H)$ the centralizer of $H$ in $K$.
We also write $[H,K]$ for the subgroup generated by the commutators between elements of $H$ and $K$.
Recall that $K$ normalizes $H$ if and only if $[H,K]\leq H$, and that~$K$ centralizes~$H$ if and only if $[H,K] = 1$.
The derived subgroup of $G$ is $G' = [G,G]$.
We denote by~$Z(G)$, $F(G)$, $O_p(G)$, $O_{p'}(G)$ the center, the Fitting subgroup,
the largest normal $p$-subgroup, and the largest normal $p'$-subgroup of $G$, respectively.

Recall that~$F(G)$ is the direct product of the subgroups~$O_p(G)$ for~$p$ a prime dividing the order of~$G$.
For solvable groups~$G$, we have the \textit{self-centralizing\/} property: that~$C_G \bigl( F(G) \bigr) \leq F(G)$.
However, this property does not hold for arbitrary groups~$G$;
so to get the desired self-centralizing property, 
we have to replace the subgroup~$F(G)$  with a natural larger subgroup~$F^*(G) = F(G)E(G)$, as follows:
The \textit{generalized Fitting subgroup} of~$G$ is the subgroup $F^*(G)$,
which is the central product of the subgroups~$F(G)$ and~$E(G)$.
The subgroup~$E(G)$ is the \textit{layer of~$G$} and it is defined as follows:
A quasisimple group is a perfect group $L$ such that $L/Z(L)$ is simple.
A \textit{component} of~$G$ is a subnormal quasisimple subgroup, and~$E(G)$ is generated by all the components of~$G$.
Note that~$G$ permutes its components via the natural conjugation action.
If~$L_1$ and~$L_2$ are distinct components of~$G$, then they commute and~$L_1 \cap L_2 \leq Z(L_1) \cap Z(L_2)$.
Therefore $E(G)$ is a central product of quasisimple groups.
We also have that $[F(G),E(G)] = 1$, and hence $F^*(G)$ is the central product of $F(G)$ and $E(G)$.
The generalized Fitting subgroup is self-centralizing: that is, we have~$C_G(F^*(G)) \leq F^*(G)$;
and when~$G$ is solvable, $F^*(G) = F(G)$.
Also~$Z(F^*(G)) = Z(F(G))$ and $Z(E(G))\leq Z(F(G))$.
See \cite[Sec.37]{AscFGT} for fuller reference.

Note that we always have~$F(G) \leq O_p(G) O_{p'}(G)$.
We will often work under the assumption that~$O_p(G) = 1 = O_{p'}(G)$, so that~$F(G) = 1$ and hence,~$Z \bigl( E(G) \bigr) = 1$.
Since~$Z \bigl( E(G) \bigr)$ equals the product of the centers of the components of~$G$,
in this case we see that the components of~$G$ are simple groups.
We summarize these standard observations (compare \cite[1.6]{AS93}) in:

\begin{lemma}
\label{lemmaOpandp}
Suppose that~$O_p(G) = 1 = O_{p'}(G)$.
Then~$F(G) = 1$ and~$F^*(G) = E(G)$ is the direct product of the components of~$G$,
which are all simple and of order divisible by~$p$.
That is, we have~$F^*(G) = L_1 \ldots L_n$; and each~$L_i$ is a simple component of~$G$.
Moreover, the self-centralizing property gives~$C_G \bigl( F^*(G) \bigr) = Z \bigl( F(G) \bigr) = 1$,
so we have a natural inclusion:
  \[ F^*(G) \leq G \leq \Aut( F^*(G) ) , \text{ and so } G/F^*(G) \leq \Out \bigl( F^*(G) \bigr) . \]
\end{lemma}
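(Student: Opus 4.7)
The plan is to chain together the structural facts recalled in this section. First I would observe that the nilpotent subgroup $F(G)$ decomposes as the direct product $\prod_q O_q(G)$ over primes $q$ dividing $|G|$. The hypothesis forces $O_p(G) = 1$, while for $q \neq p$ we have $O_q(G) \leq O_{p'}(G) = 1$; hence $F(G) = 1$, and in particular $Z \bigl( F(G) \bigr) = 1$. Since $Z \bigl( E(G) \bigr) \leq Z \bigl( F(G) \bigr)$ was recalled above, every component of $G$ has trivial center and, being quasisimple, is therefore simple. Because distinct components commute and meet in a subgroup of their (now trivial) centers, the central product $E(G) = L_1 \cdots L_n$ is in fact a direct product; and $F^*(G) = F(G) E(G) = E(G)$ gives the asserted direct factorization into simple components.

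Next I would show that $p$ divides $|L_i|$ for every $i$. Suppose for contradiction that some component $L$ has $(|L|, p) = 1$. Because $G$ permutes its components by conjugation, the $G$-orbit $L_1, \ldots, L_t$ of $L$ consists of pairwise commuting simple $p'$-groups with pairwise trivial intersection, so $N := L_1 \cdots L_t$ is a direct product of $p'$-groups and hence a $p'$-group. By construction $N$ is $G$-invariant, so $N$ is a nontrivial normal $p'$-subgroup of $G$, contradicting $O_{p'}(G) = 1$.

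For the final assertion, the self-centralizing property recalled in the section gives $C_G \bigl( F^*(G) \bigr) \leq F^*(G)$, so $C_G \bigl( F^*(G) \bigr) \leq F^*(G) \cap C_G \bigl( F^*(G) \bigr) = Z \bigl( F^*(G) \bigr)$. But $Z \bigl( F^*(G) \bigr) = Z \bigl( F(G) \bigr) = 1$ by the identities listed above, so $C_G \bigl( F^*(G) \bigr) = 1$. Therefore conjugation produces a faithful embedding $G \hookrightarrow \Aut \bigl( F^*(G) \bigr)$, and since $Z \bigl( F^*(G) \bigr) = 1$ identifies $F^*(G)$ with $\Inn \bigl( F^*(G) \bigr)$, passing to the quotient yields $G / F^*(G) \hookrightarrow \Aut \bigl( F^*(G) \bigr) / \Inn \bigl( F^*(G) \bigr) = \Out \bigl( F^*(G) \bigr)$, as claimed.

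The only genuinely non-bookkeeping step is the $p$-divisibility argument for the components, where one must close up under the full $G$-orbit---a single subnormal $p'$-component is typically not itself $G$-invariant, so one cannot directly conclude from $O_{p'}(G) = 1$ without first forming the orbit product. Everything else is a direct application of the structural facts already summarized in this section or cited from \cite{AscFGT}.
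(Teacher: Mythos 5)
Your proof is correct and fills in exactly the details that the paper leaves as ``standard observations'' (with a pointer to~\cite[1.6]{AS93}): the decomposition of $F(G)$ over primes, the containment $Z\bigl(E(G)\bigr) \leq Z\bigl(F(G)\bigr)$, the orbit-product argument for $p$-divisibility, and the self-centralizing property of $F^*(G)$ yielding the embedding of $G$ into $\Aut\bigl(F^*(G)\bigr)$. One small correction to your closing remark: it is in fact true that every subnormal $\pi$-subgroup of $G$ lies in $O_\pi(G)$ (induct on the subnormal defect, using that $O_\pi(N)$ is characteristic in $N$, hence normal in $G$, whenever $N \normal G$), so a single $p'$-component would already land in $O_{p'}(G) = 1$ and the orbit product---though perfectly valid and arguably more transparent---is not strictly required.
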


%The following lemma will be useful.

%\begin{lemma}\label{lemmaNormalizeComponent}
%If $K,L\leq G$ where $L$ is a component of $G$ and $K$ normalizes a nontrivial subgroup of $L$, then $K$ normalizes $L$.
%\end{lemma}

\noindent
Recall the outer automorphism group of~$G$ is~$\Out(G) = \Aut(G) / \Inn(G)$,
where~$\Inn(G) = G/Z(G)$ is the group of inner automorphisms of~$G$.
Recall for~$H\leq G$ that~$\Aut_G(H) = N_G(H)/C_G(H)$ is the group of automorphisms of~$H$ induced by~$G$,
and~$\Out_G(H) = N_G(H)/(H C_G(H))$ is the group of outer automorphisms of~$H$ induced by~$G$.
The subgroup~$H C_G(H)$ is the subgroup of $G$ whose elements induce inner automorphisms on~$H$.
We will say that a subgroup~$K \leq G$ induces (or acts by) outer automorphisms on~$H$,
if~$K$ normalizes~$H$, and~$K$ contains no inner automorphism of~$H$.
That is, $K$ induces outer automorphisms on~$H$ if and only if~$K\cap (HC_G(H)) = 1$.

In the following lemma, we relate the subgroups of inner automorphisms for sets of commuting subgroups of a given group.
The proof is a straightforward use of the Dedekind modular law.

\begin{lemma}\label{lemmaInnerDecomposition}
Let~$A,B \leq G$ such that~$[A,B] = 1$.
Then~$\bigl( A C_G(A) \bigr) \cap \bigl( B C_G(B) \bigr) = AB C_G(AB)$.

\noindent
Indeed if~$A_1,\ldots,A_r\leq G$ commute pairwise,
then~$\bigcap_i\ \bigl( A_i C_G(A_i) \bigr) = (A_1 \ldots A_r) C_G(A_1 \ldots A_r)$. 
\end{lemma}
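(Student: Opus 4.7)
The plan is to prove the two-subgroup case directly by a double inclusion using the Dedekind modular law, then obtain the $r$-fold version by a straightforward induction.

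For the inclusion $\supseteq$, I would first observe that the hypothesis $[A,B]=1$ gives $B \leq C_G(A)$ and $A \leq C_G(B)$, hence $AB \leq A C_G(A)$ and $AB \leq B C_G(B)$. Since $C_G(AB) = C_G(A) \cap C_G(B)$ is contained in both $C_G(A)$ and $C_G(B)$, it follows that $AB \, C_G(AB)$ lies inside both $A C_G(A)$ and $B C_G(B)$.

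For the nontrivial inclusion $\subseteq$, I would take an arbitrary $x \in (A C_G(A)) \cap (B C_G(B))$ and write $x = ac$ with $a \in A$ and $c \in C_G(A)$. Since $a \in A \leq C_G(B)$ and $x \in B C_G(B)$, the element $c = a^{-1}x$ lies in $B C_G(B) \cap C_G(A)$. Now I would invoke Dedekind's modular law: because $B \leq C_G(A)$, we have
\[
B C_G(B) \cap C_G(A) = B \bigl( C_G(B) \cap C_G(A) \bigr) = B \, C_G(AB).
\]
So $c = bd$ with $b \in B$ and $d \in C_G(AB)$, giving $x = abd \in AB \, C_G(AB)$, as required. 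This completes the case $r=2$, which is really the whole content of the lemma.

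The general statement then follows by induction on $r$: setting $A = A_1 \cdots A_{r-1}$ and $B = A_r$, the pairwise commuting hypothesis gives $[A,B] = 1$, and the inductive hypothesis identifies $\bigcap_{i=1}^{r-1} A_i C_G(A_i)$ with $A C_G(A)$, so applying the $r=2$ case to $A$ and $B$ yields the result. There is no real obstacle here; the only point requiring any care is keeping track of which subgroup is contained in which in order to apply the modular law in the correct direction, but the containment $B \leq C_G(A)$ provided by $[A,B]=1$ makes this immediate.
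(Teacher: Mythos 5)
Your proof is correct, and it follows exactly the route the paper indicates: the paper states only that the lemma is "a straightforward use of the Dedekind modular law," and your argument fills that in precisely — double inclusion with the modular law applied to $B C_G(B) \cap C_G(A)$ using $B \leq C_G(A)$, the identity $C_G(A) \cap C_G(B) = C_G(AB)$, and a routine induction for the $r$-fold version.
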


In Proposition~1.6 of~\cite{AS93} it is shown that:
If~$G$ satisfies~(H1) with~$p$ odd, and does not contain components of type~$L_2(8)$, $U_3(8)$ or~$\Sz(32)$ for~$p=3,3,5$ (resp.),
then further assuming~$O_{p'}(G)\neq 1$ implies~(H-QC) for~$G$.
Inspired by this result, in \cite[Thm~1]{KP20} it is shown that the restrictions on~$p$ and on the components are unnecessary.
We recall this generalization below:

\begin{theorem}
\label{generalReduction}
Suppose that~$G$ satisfies~(H1).
If further~$O_{p'}(G) \neq 1$, then~$G$ satisfies~(H-QC).
\end{theorem}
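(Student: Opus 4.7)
The plan is to argue by contradiction, using (H1) as inductive leverage. Suppose $G$ satisfies~(H1) and $N := O_{p'}(G) \neq 1$, but (H-QC) fails for $G$; so $O_p(G) = 1$ and $\tilde H_*(\A_p(G);\QQ) = 0$. The goal is to produce a nonzero homology class in $\tilde H_*(\A_p(G))$, contradicting the vanishing assumption.

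First I would extract a good minimal piece of $N$: pick a minimal normal subgroup $M$ of $G$ contained in $N$. Since $|M|$ is coprime to $p$, $M$ is a direct product of isomorphic simple $p'$-groups (either elementary abelian $q$ for some prime $q\neq p$, or non-abelian). Set $C := C_G(M) \trianglelefteq G$ and split the analysis into two cases.

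\textbf{Central case} ($C = G$, equivalently $M \le Z(G)$). Here $M$ is necessarily elementary abelian $q$, and $G/M$ is a \emph{proper central quotient} of $G$; by (H1) it satisfies (H-QC). Because $M \le Z(G)$ has order prime to $p$, every $p$-subgroup $P \le G$ satisfies $PM = P \times M$, and the assignment $P \mapsto PM/M$ is an order-preserving bijection $\A_p(G) \to \A_p(G/M)$, with inverse taking the unique Sylow $p$-subgroup of the preimage. A direct check also gives $O_p(G/M) = 1$. Hence (H-QC) for $G/M$ yields $\tilde H_*(\A_p(G)) \cong \tilde H_*(\A_p(G/M)) \neq 0$, contradicting our assumption.

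\textbf{Non-central case} ($C < G$). This is the main obstacle: $G/C$ embeds nontrivially in $\Aut(M)$, and one must leverage (H1) on a proper subgroup (such as $C$ itself, or a suitable proper $p$-local subgroup built from $M$ and its coprime-action centralizers) and then transport the resulting nonvanishing class up to $\A_p(G)$. The natural tool is a variant of Quillen's fiber lemma (Proposition~\ref{variantQuillenFiber}) applied to an inclusion $\A_p(H) \hookrightarrow \A_p(G)$ for a carefully chosen proper $H \le G$ containing $M$; the structural input is coprime action, which gives $M = C_M(A)\cdot[M,A]$ for every $A \in \A_p(G)$ and thereby controls the fibers. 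To prevent the class one has constructed from being killed under pushforward, one would typically combine this with a $G$-equivariant spectral-sequence or Mackey-style decomposition of $\A_p(G)$ along its $G$-orbits, and run a secondary induction on $|M|$ and on $[G:C]$. Making this work uniformly---without the extra hypotheses on $p$ and on components present in \cite[Prop.~1.6]{AS93}---is essentially the content of \cite[Thm~1]{KP20}.
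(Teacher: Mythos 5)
The statement you are asked to prove is not actually proved in this paper: the authors quote it directly from \cite[Thm~1]{KP20}, citing it as a generalization of \cite[Prop.~1.6]{AS93}, so there is no in-paper argument to compare your proposal against line by line. That said, your proposal is genuinely incomplete, and I want to be specific about where.

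Your central case is correct and essentially routine: for $M \le Z(G)$ a nontrivial central $p'$-subgroup, the map $P \mapsto PM/M$ is an order-isomorphism $\A_p(G) \to \A_p(G/M)$ (since the Sylow $p$-subgroup of $PM = P\times M$ is unique), $O_p(G/M) = 1$ follows because the unique Sylow $p$-subgroup of the preimage of $O_p(G/M)$ is characteristic and hence normal in $G$, and then (H1) applied to the proper central quotient $G/M$ gives the contradiction. This is fine, but it is not where the content of the theorem lives.

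The non-central case is the theorem, and you have not proved it. What you wrote is a list of plausible-sounding tools (Quillen's fiber lemma applied to some unspecified $\A_p(H)\hookrightarrow \A_p(G)$, coprime action giving $M = C_M(A)[M,A]$, a ``$G$-equivariant spectral-sequence or Mackey-style decomposition,'' a ``secondary induction on $|M|$ and $[G:C]$'') followed by the sentence that actually making this work ``is essentially the content of \cite[Thm~1]{KP20}.'' That last sentence is an admission that you are citing the result rather than proving it. None of the named tools is pinned down: you do not say which proper subgroup $H$ to take, what the fibers of the inclusion are, why they would be suitably connected (Quillen's fiber lemma, as stated in Proposition~\ref{variantQuillenFiber}, needs connectivity of the fibers, which certainly does not come for free from coprime action alone), what the candidate nonzero homology class is, or how the proposed spectral sequence is indexed. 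The historical difficulty in \cite[Prop.~1.6]{AS93}---that certain unitary, $L_2(8)$, and $\Sz(32)$ components obstruct the argument for small $p$---lives precisely in this non-central branch, and your sketch gives no hint of how those obstructions are overcome. Absent that, the proposal is a correct warm-up plus a pointer to the literature, not a proof.
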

 
 In particular this result leads to an implication relation between our inductive hypotheses:

\begin{remark}\label{remarkH1andHLp}
Suppose that~$G$ satisfies~(H1).
If~$O_{p'}(G) \neq 1$, then~$G$ has~(H-QC) by Theorem~\ref{generalReduction}.
Otherwise~$O_{p'}(G) = 1$: and then it follows %
that every component~$L$ of~$G$ has order divisible by~$p$.
Moreover, writing~$L_1 , \ldots , L_t$ for the~$G$-orbit of~$L$,
since~$L$ is nonabelian, we have~$C_G(L_1 \ldots L_t)$ is a proper subgroup of~$G$---and hence it satisfies~(H-QC) by~(H1).
That is, we obtain~(H$L(p)$). 
So:

\centerline{
  Under~(H1), $G$ satisfies either~(H-QC), or~(H$L(p)$) for every component~$L$.
            }            

\noindent
So in effect, our later proofs under~(H1) can often proceed just under~(H$L(p)$).
\donerk
\end{remark}

To finish this section, we recall the definition (for a subgroup~$H \leq G$) of the inflation~$\N_G(H)$ of~$H$,
consisting of the elements of~$\A_p(G)$ which intersect~$H$ nontrivially.
This poset had been used earlier in e.g.~\cite{Segev,SW}, and more recently in~\cite{KP20}.

\begin{definition}\label{definitionCalN}
For~$H\leq G$ and~$\B \subseteq \A_p(G)$, set:
  \[ \N_\B(H) := \{ B \in \B \tq B \cap H \neq 1 \} .\]
If~$\B = \A_p(K)$ for some subgroup~$K\leq G$, then we also write~$\N_\B(H) = \N_K(H)$.
\end{definition}

We recall some special features of this subposet:

\begin{remark}
\label{remarkPropertiesOfCalN}
We have~$\N_G(H) \simeq \A_p(H)$---using the retraction~$E \mapsto r(E) = E \cap H$,
with homotopy-inverse given by the inclusion of $\A_p(H)$ in $\N_G(H)$.
So we regard the subposet~$\N_G(H)$ as the \textit{inflation} of~$\A_p(H)$ in~$G$.

Moreover, if we consider an~$E \in \A_p(G) - \N_G(H)$ (that is, with~$E \cap H = 1$), 
then~$r$ restricts to a homotopy equivalence~$\N_G(H)_{>E} \simeq \A_p \bigl( C_H(E) \bigr)$,
with homotopy-inverse given by~$A \mapsto AE$.
So when we consider the fibers of the inclusion~$i : \N_G(H) \subseteq \A_p(G)$,
for~$E \in \A_p(G) - \N_G(H)$ we get:
  \[ i^{-1}( \A_p(G)_{\geq E} ) = \N_G(H)_{>E} \simeq \A_p \bigl( C_H(E) \bigr) . \]
This relation connects Quillen's fiber lemma (Proposition~\ref{variantQuillenFiber}) with the study of centralizers.
\donerk
\end{remark}

\section{Discussion: Segev's method, via a Mayer-Vietoris argument}
\label{sectionDiscussionSegev}

The proof of our main technical result Theorem~\ref{theoremNormalCaseReduction}
(which leads to our more concrete elimination-results
such as Theorems~\ref{theorem2} and Corollary~\ref{cor:theorem2MoreGeneral})
uses a generalization of Segev's method in~\cite{Segev}.
In this section, we outline some fundamental aspects of Segev's proofs;
and along the way, we indicate where we can make our extensions.

\bigskip

\noindent
We begin the section with some general background for Segev's theorems:

We are pursuing results which establish~(H-QC).
So:

\centerline{
We assume~$O_p(G) = 1$;
            }

\noindent
and we need to obtain:

\centerline{
  (Goal) \quad $\tilde{H}_* \bigl( \A_p(G) \bigr) \neq 0$.
            }

\noindent
Segev's arguments primarily involve:

\smallskip

\centerline{
  $H :=$ the kernel of the $G$-conjugation action on its components.
            }

\begin{remark}[Some consequences of~$O_{p'}(G) = 1$]
\label{rk:conseqOp'G=1}
Segev works under the further assumption %
that~$O_{p'}(G) = 1$;
and using this along with~$O_p(G) = 1$, he gives some standard consequences at~pp955--956 of~\cite{Segev}.
We summarize a number of such consequences (cf.~our~\ref{lemmaOpandp}) as:

  (i) $F^*(G) = E(G)$ is a direct product of simple components~$L_1, \dots, L_n$, with~$G$ faithful on~$F^*(G)$;

  (ii) The components of~$G$ have order divisible by~$p$.

\noindent
Segev's arguments seem to frequently depend on these properties---but very often only {\em implicitly\/}.
For our analysis of his logic with an eye to generalization,  we will always try to be much more explicit.
For example, we mention that using~(ii) above we get: 

  (iii) $\A_p(L_i) \neq \emptyset$ for all $i$, and hence $\A_p(H)\neq\emptyset$.

\noindent
This fact seems to be fundamental---though unstated---throughout~\cite{Segev}. 
\donerk
\end{remark}

A further point which seems at best implicit in~\cite{Segev}
is the handling of the case~$n = 1$---namely where there is just a single component~$L_1$:
Note using Remark~\ref{rk:conseqOp'G=1}(i) above that then~$F^*(G) = L_1$, so that~$G$ is almost-simple. 
Now the well-known result of Aschbacher and Kleidman in~\cite{AK90} gives~(H-QC) for~$G$ almost-simple;
so that result {\em could\/} be quoted to finish when~$n = 1$. 
But in fact, Segev only explicitly quotes~\cite{AK90} on~p956: and there,
only to show (for any~$n$) that the hypothesis of Theorem~3 in~\cite{Segev} satisfies the hypothesis of Theorem~2 there.
Thus it seems likely that a different argument was intended to cover the case~$n = 1$:
For notice that this condition in particular gives
a special case of trivial~$G$-conjugation action on components---that is, $H = G$.
So we will include the reduction to~$n \geq 2$ as Remark~\ref{rk:conseqH<G}(ii) below,  
during our discussion of the more general trivial-action situation:
where we will see that reduction to~$H < G$ provides the foundation for the ``generic-context'' in our logic-analysis, namely:

\bigskip

\noindent
{\em A Mayer-Vietoris sequence, based on nontrivial action on components:}%

\bigskip

\noindent
Before starting on the proofs, we develop an outline for the overall approach followed in them.

The more specific hypotheses of Theorems~1 and~2 in~\cite{Segev} involve restrictions related to the homology of~$\A_p(H)$.
And the proofs there seem to begin by assuming that the case~$H < G$ must hold.%
\footnote{
   In fact, an unpublished result of Thompson (in a preprint on ``separating sets'', from July~1991)
   shows that a counterexample~$G$ to~(H-QC) under the hypotheses of Aschbacher-Smith ~\cite{AS93}
   should have~$H < G$---indeed~$G/H$ should be nonsolvable.
   But we won't in this paper follow that direction:
   since we are instead focusing on the kernel~$H$ of the action, and outer automorphisms that it induces on components.
          }
So for completeness, when we later outline the logic in those proofs, 
we will at those points add a treatment of the seemingly-omitted case where~$H = G$---that is,
where~$G$ normalizes all the components. %
Indeed, we will treat the slightly more general situation where~$\A_p(H) = \A_p(G)$:
for since~(Goal) only involves~$\A_p(G)$ (via its homology),
we may as well assume in that situation that~$H = G$.
In brief summary:
We will verify there that in the trivial-action case,
the hypotheses of Theorems~1 and~2 essentially already {\em contain\/} the conclusion~(H-QC); 
so that no further proof is then actually required. 

Hence, in the remainder of our introductory Mayer-Vietoris discussion, i.e.~through Remark~\ref{rk:initimpl}:

\centerline{
  We assume temporarily that we have already reduced to~$\A_p(H) \subsetneq \A_p(G)$;
            }

\noindent
and hence also to~$H < G$---namely nontrivial action on components. %
In this situation,
we want to set up a general context for the main arguments, within this ``generic'' case of nontrivial action.

In order to work toward the nonzero homology for~$\A_p(G)$ in~(Goal),
we will proceed by decomposing that homology over certain other~$p$-subgroup posets related to~$H$.
In particular, we will make use of the inflation~$Y := \N_G(H)$ of~$\A_p(H)$,
and the complement~$Z := \A_p(G) - \A_p(H)$ to~$\A_p(H)$;
that is, we will work with the decomposition~$\A_p(G) = Y \cup Z$.
We set~$Y_0 := Y \cap Z$ for the overlap.
We also recall by Remark~\ref{remarkPropertiesOfCalN}
that~$Y$ is homotopy equivalent to~$\A_p(H)$, via the retraction~$r(E) = E \cap H$;
we then further set~$V_0 := r(Y_0)$, and let~$b : V_0 \subseteq \A_p(H)$ denote the inclusion.
We will need:

\begin{remark}[Some consequences of reducing to nontrivial action~$\A_p(H) \subsetneq \A_p(G)$]
\label{rk:conseqH<G}
Recall we are assuming here that we have in fact reduced to~$\A_p(H) \subsetneq \A_p(G)$. 
Let's examine the effect of this assumption on the terms of the above decomposition $\A_p(G) = Y \cup Z$.

Notice first that~$Y = \N_G(H)$~is nonempty---for it contains~$\A_p(H)$,
which we saw at Remark~\ref{rk:conseqOp'G=1}(iii) is nonempty.

Next note that our assumed reduction provides us with some~$A \in \A_p(G) - \A_p(H) = Z$, 
so that~$Z$ also is nonempty. 
In fact, we get~$A = (A \cap H) \times B$, 
where nontrivial~$B \in \A_p(G)$ has~$B \cap H = 1$---so that~$B \in Z - Y$.
And then our decomposition~$Y \cup Z$ is {\em nontrivial\/}---that is,
we really do get {\em two\/} independent nonempty parts.
Indeed we can add here, again using~$\A_p(H) \neq \emptyset$, 
that any~$E \in Z$ must centralize some~$F \in \A_p(H)$---giving a member~$EF$ of~$Y_0$, since~$EF \cap H \geq F > 1$;
so that also~$Y_0$ is nonempty, and hence~$V_0$ is nonempty.
We summarize these observations in:

  (i) The decomposition~$\A_p(G) = Y \cup Z$ is nontrivial; with~$Y,Z,Y_0,V_0$ nonempty.

\noindent
Note also that from the assumed reduction to~$H < G$, we immediately get:

  (ii) We have~$n \geq p \geq 2$ for the number of components of~$G$.

\noindent
These observations (and indeed some other consequences of~$H < G$)
will be important at various  points, as our analysis below continues.
\donerk
\end{remark}

In view of our assumed reduction, we now see via Remark~\ref{rk:conseqH<G}(i) 
that our fundamental decomposition~$\A_p(G) = Y \cup Z$ is nontrivial. 
And in using the decomposition, 
we can roughly regard the inflation~$Y$ as the neighborhood of~$\A_p(H)$,
and~$Y_0 = Y \cap Z$ as the boundary of that neighborhood.
In this viewpoint, the condition~(A) below exploits a local-requirement related to~$H$:
namely that the boundary~$Y_0$ should (perhaps not surprisingly) be ``thinner'' than the full neighborhood~$Y$, 
in the sense of not being surjective in homology;
and then from this restriction,
we will obtain the (perhaps surprising) global-consequence of nonzero homology for~$\A_p(G)$.
To implement this plan, we will use the natural homological context for our decomposition:

\begin{remark}[The Mayer-Vietoris sequence for the decomposition~$\A_p(G) = Y \cup Z$]
\label{rk:MVseqfordecompYZ}
The Mayer-Vietoris exact sequence for our decomposition~$\A_p(G) = Y \cup Z$ takes the form:
  \[ \ldots \to \tilde{H}_{k+1} \bigl( \A_p(G) \bigr)
            \to \tilde{H}_{k}(Y_0)
	    \overset{\alpha}{\to} \tilde{H}_k(Y) \oplus \tilde{H}_k(Z)
	    \overset{\beta}{\to}  \tilde{H}_k \bigl( \A_p(G) \bigr)
	    \to \ldots
  \]
Recall via our assumed reduction to~$\A_p(H) \subsetneq \A_p(G)$ and Remark~\ref{rk:conseqH<G}(i) that~$Y,Z \neq \emptyset$.
\donerk
\end{remark}

\noindent
Note that if we had~$\A_p(H) = \A_p(G)$ in Remark~\ref{rk:MVseqfordecompYZ} above, we would get~$Z = \emptyset$;
this then leads to~$Y = \A_p(H) = \A_p(G)$, and hence we get the trivial decomposition~$\A_p(G) = Y \cup Z = \A_p(G) \cup \emptyset$---in
which case, the sequence would degenerate to just an isomorphism of~$\tilde{H}_* \bigl( \A_p(G) \bigr)$ with itself.
(This why our development has emphasized the need to obtain nontriviality for the decomposition;
the issue of nontriviality is seemingly omitted, when the sequence is introduced at~p958 of~\cite{Segev}.)

Recall~$Y_0 \neq \emptyset$ by Remark~\ref{rk:conseqH<G}(i).
In the context of the Mayer-Vietoris sequence in Remark~\ref{rk:MVseqfordecompYZ},
our approach to~(Goal) will be to now establish, for the inclusion~$a : Y_0 \subseteq Y$, that:
\begin{quote}
  (A) \quad The induced map in homology~$a_* : \tilde{H}_*(Y_0) \to \tilde{H}_*(Y)$ is not surjective.
\end{quote}

\noindent
For since~$a_*$ gives the~$Y$-coordinate of the value of~$\alpha$ in the sequence,
we see under~(A) that then~$\alpha$ also is non-surjective: that is, has proper image.
But by exactness, $\text{im}(\alpha) = \text{ker}(\beta)$ is then also proper, 
and this proper kernel gives~$\beta$ the desired nonzero image in~$\tilde{H}_* \bigl( \A_p(G) \bigr)$.

To now prove~(A), we usually proceed somewhat indirectly---via examining further related maps in homology.
As we continue our analysis below, we will build up successive homological terms, and maps among them;
these are collected in our ``omnibus'' commutative diagram, in later Remark~\ref{rk:diagramSegev}---as
the culmination of our analysis of the overall logic for proving Theorems~1 and~2.

\bigskip

\noindent
The first few such ``related maps'' are reasonably self-evident:

Recall by Remark~\ref{remarkPropertiesOfCalN} that~$Y$ is homotopy equivalent to~$\A_p(H)$, via the retraction~$r(E) = E\cap H$;
so condition~(A) is equivalent to:
\begin{quote}
  (A$^\prime$)
     \quad The composition
           map~$\tilde{H}_*(Y_0) \overset{a_*}{\to} \tilde{H}_*(Y) \overset{r_*}{\to} \tilde{H}_* \bigl( \A_p(H) \bigr)$
           is not surjective. 
\end{quote}
Now recall by Remark~\ref{rk:conseqH<G}(i) that~$V_0 = r(Y_0) \neq \emptyset$. 
We can formulate a condition analogous to~(A):
\begin{quote}
  (B) \quad The induced map in homology~$b_* : \tilde{H}_*(V_0) \to \tilde{H}_*(\A_p(H))$ is not surjective.
\end{quote}
And we see under~(B) that non-surjectivity of~$b_*$
gives non-surjectivity also for the composition given by~$b_* \circ (r|_{Y_0})_*$---which
(cf.~the diagram in Remark~\ref{rk:diagramSegev}) is the composition~$r_* \circ a_*$, so that we in fact get~(A$^\prime$).
We summarize these deductions in:

\begin{remark}[Some initial implication-relations]
\label{rk:initimpl}
So far we have obtained the logical implications: 
\begin{center}
  (B) $\Rightarrow$ (A$^\prime$) $\Leftrightarrow$ (A) $\Rightarrow$ (Goal) .
\end{center}
In particular, we see that one route toward (Goal) is via establishing~(B).
\donerk
\end{remark}

To now prove~(B), we will need suitable further hypotheses on the homology of~$\A_p(H)$.
Some of these will lead us to formulate more conditions and implications---in particular extending,
by an additional term implying~(B), the chain in Remark~\ref{rk:initimpl} to that in later Remark~\ref{rk:summarylogimplic}.
But first, in the proof of Theorem~1 in~\cite{Segev} below, we will instead fairly directly obtain~(B) itself:

\bigskip
\vspace{0.2cm}

\noindent
{\em An outline of the proof of Segev's Theorem~1:} 

\bigskip

For Theorem~1, the ``suitable further hypothesis'' is that the inclusion:

\centerline{
   $i : \D_p(H) \subseteq \A_p(H)$
            }

\noindent
should induce a map~$i_*$ in homology which is non-surjective:
where~$\D_p(H)$ is the ``diagonal subposet'' of~$H$ defined on~\cite[p956]{Segev}.
We use the equivalent definition that~$\D_p(H)$ is given by the~$A \in \A_p(H)$
such that there exist components~$L_1, \ldots , L_t$, $t \geq 2$, with~$C_A(L_1 \cdots L_t) = C_A(L_i)$ for all~$i$.

The proof of Theorem~1 in~\cite{Segev} begins with the decomposition~$\A_p(G) = Y \cup Z$,
as in the setup for Remark~\ref{rk:conseqH<G} above;
thus in effect, it assumes that the case~$Z \neq \emptyset$ must hold.
So, in order to cover the seemingly-omitted case~$Z = \emptyset$, 
we give here our promised explicit treatment of the (equivalent) case~$\A_p(H) = \A_p(G)$ of trivial action.
Note that in any case for~$\A_p(H)$, the hypothesis in Theorem~1 of non-surjectivity of the map~$i_*$%
\footnote{
   We mention that this reduction to~$H < G$ using~$i_*$ is valid,
   even when the domain-poset~$\D_p(H)$ for~$i$ is empty---so that reduced homology in formal dimension~$-1$ is being used. 
   But the reader uncomfortable with arguments using the empty set could instead cover that empty-subcase
   by using our later observation (during our analysis of the proof of Theorem~2) that~$\D_p(H) = \emptyset$ gives~$n = 1$;
   and then in that situation, where~$G$ is almost-simple, finishing via~(H-QC) using~\cite{AK90} as we had mentioned earlier.
          }
implies that its codomain-space~$\tilde{H}_* \bigl( \A_p(H) \bigr)$ must be nonzero.
So when~$\A_p(H) = \A_p(G)$, we in fact have the nonzero homology of~$\A_p(G)$ required for~(Goal).
Thus for this case, the hypothesis already {\em includes\/} the conclusion~(H-QC) of the result.
That is, Theorem~1 becomes essentially a tautology here: so that this subcase doesn't really {\em require\/} further proof.
However, the overall proof of Theorem~1 seems incomplete, without some mention of this ``not-required'' fact.

Thus we have explicitly reduced to~$\A_p(H) \subsetneq \A_p(G)$ (and hence nontrivial action~$H < G$).
Consequently by Remark~\ref{rk:conseqH<G}(i):
we do indeed have nontriviality of the decomposition~$\A_p(G)$, including~$Z \neq \emptyset$; 
and thus we may use the generic-context of the Mayer-Vietoris sequence given in earlier Remark~\ref{rk:MVseqfordecompYZ}. 

Within that Mayer-Vietoris context,
and the corresponding terms appearing in the left half of the summary-diagram in Remark~\ref{rk:diagramSegev},
the crucial step in the proof is that:
\begin{quote}
  We get an inclusion~$j : V_0 \subseteq \D_p(H)$.
\end{quote}
We see then that the non-surjectivity hypothesis on~$i_*$
now gives non-surjectivity also for the composition~$i_* \circ j_*$---which is the map~$b_*$:
that is, we get~(B)---and hence~(Goal) using the implications in Remark~\ref{rk:initimpl}.

\bigskip

We mention that the proof of this inclusion~$j$ on~p959 of~\cite{Segev} begins
by assuming that the case~$V_0 \neq \emptyset$ holds. 
The seemingly-omitted case~$V_0 = \emptyset$ can in fact be covered by invoking our reduction to~$\A_p(H) \subsetneq \A_p(G)$,
as we saw in Remark~\ref{rk:conseqH<G}(ii).

For later comparison, we also indicate here the rest of the proof of that inclusion~$j$:
As just indicated, we can indeed assume that~$V_0 \neq \emptyset$; so consider any~$B \in V_0$.
Then we have~$B = A \cap H$ for some~$A \in Y_0$, where~$A$ contains some $a \not \in H$. 
Since~$H$ is the kernel on components,
$a$ must have at least one nontrivial orbit~$\O$ (of size~$p \geq 2$) on components;
and then the nontrivial elements of~$B \leq C_G(a)$ must exhibit isomorphic actions on all members~$L$ of~$\O$---which
gives us the condition (namely~$C_B(\O) = C_B(L)$ for all~$L \in \O$) defining~$B \in \D_p(H)$.

\bigskip

We turn now to arguments which {\em will\/} involve further conditions in order to lead to~(B):

\bigskip

\noindent
{\em An outline of the proof of Segev's Theorem~2:} 

\bigskip

These further conditions arise in the proof of Theorem~2 in~\cite{Segev},
which we stated earlier as Theorem~\ref{theoremSegevOriginal};
in fact we will indicate a slight extension of Segev's argument, which we will use in our results later in this paper.

We recall that for Theorem~2, the ``suitable further hypothesis on~$H$''
is that for each component~$L_i$ of~$G$, 
the inclusion~$\A_p(L_i) \subseteq \A_p \bigl( \Aut_H(L_i) \bigr)$ should induce a {\em nonzero\/} map in homology. 
(The faithful action of~$L_i$ here implicitly uses simplicity of~$L_i$ in Remark~\ref{rk:conseqOp'G=1}(i).)

Segev's proof of Theorem~2 in fact involves the join~$X$ of factors
which are given by the above posets~$\A_p \bigl( \Aut_H(L_i) \bigr)$;
along with some other associated constructions.  
In our generalization of his method in Section~\ref{sectionMainTool}, 
we will want to instead choose~$X$ as the join of certain variants on those factors. 
So in order to focus on some of the issues involved, 
before we begin the main proof of Theorem~2, 
we will first spend a few paragraphs sketching various aspects involved in working with such joins.
We will give an overall context a little more general than in~\cite{Segev};
though we will still indicate the features given by Segev at several different points in that paper.

\bigskip

\noindent
{\em Preliminary discussion: Constructions related to the join-poset~$X$\/}.
In order to exploit the posets in the hypothesis of Theorem~2, Segev on~p957 of~\cite{Segev} defines their join:%

\begin{definition}[The fundamental join~$X$]
\label{defn:Kasjoin}
We set: 

\centerline{
  $X := X_1 * \dots * X_n$; where~$X_i := \A_p \bigl( \Aut_H(L_i) \bigr)$.
            }

\noindent
This~$X$ provides the codomain for the crucial poset map $\psi : \A_p(H) \to X$ indicated below.
\donerk
\end{definition}

\noindent
Various related structures are general; i.e.~they do not {\em yet\/} involve our hypothesis on the~$X_i$:

There are several direct-product groups in the background (cf.~p956 of~\cite{Segev}).
Using Remark~\ref{rk:conseqOp'G=1}(i), we have~$E(G) = L_1 \times \cdots \times L_n \leq H$,
with~$H$ acting faithfully on~$E(G)$.
We let~$\pi_i : H \to \Aut_H(L_i)$ be the natural projection;
and let~$\pi := (\pi_1 ,\ \dots\ , \pi_n)$ denote
the ``co-ordinate'' map into the formal direct product~$J = J_1 \times \ldots \times J_n$, 
where $J_i := \Aut_H(L_i)$.
From faithfulness, it is standard that~$\pi$ maps~$H$ to an isomorphic subgroup~$\pi(H) \leq J$.
That is: though~$H$ itself need not admit a direct-product decomposition, 
it is in effect embedded in the abstractly-given formal direct product~$J$ of the individual groups~$\Aut_H(L_i) = J_i$.

It is also standard that a direct product of groups leads to certain analogous join-relationships at the level of~$\A_p$-posets:
First~$\A_p \bigl( E(G) \bigr)$ is homotopy equivalent to the join~$\B$ of the individual~$\A_p(L_i)$.
Similarly~$\A_p(J)$ is homotopy equivalent to the join~$X$ of the individual~$\A_p(J_i) = X_i$;
and we write~$\psi$ for this standard homotopy equivalence (compare~\cite[Prop~2.6]{Qui78}).
We will describe~$\psi$ more precisely at later Definition~\ref{defn:iEandpsi}.
The restriction of this standard~$\psi$ to~$\A_p \bigl( \pi(H) \bigr)$
is essentially the map which Segev denotes by~``$\psi : \A_p(H) \to X$'' ---and
we will similarly write~$\psi$ rather than $\psi |_{\A_p(H)} = \psi \circ \pi$ for this restriction.
Indeed within the join~$X$, we get a relationship analogous with that in the previous paragraph:
namely~$\psi$ maps~$\A_p(H)$ to~$\psi \bigl( \A_p(H) \bigr) \subseteq X = X_1 * \dots * X_n$.
That is, though~$\A_p(H)$ itself need not admit a join-decomposition,
it has a natural image 
in the abstractly-given formal join~$X$ of the~$\A_p \bigl( \Aut_H(L_i) \bigr) = X_i$. 

Finally we recall the standard fact (cf.~our~(\ref{equationJoinHomology}), or~\cite[(2.2)]{Segev})
that the reduced homology of a join such as~$X$ is the tensor product of the reduced homology of the factors~$X_i$.

\bigskip

\noindent
With these features in place, we now {\em do\/} apply our specific hypothesis on components:

Namely the homology of the join~$\A_p \bigl( E(G) \bigr)$ is the tensor product of the homology of the factors~$\A_p(L_i)$;
and our hypothesis says that the ``inclusions''~$\A_p(L_i) \subseteq \A_p \bigl( \Aut_H(L_i) \bigr) = X_i$ are nonzero in homology.
Hence, the join gives an inclusion map~$\A_p(L_1) * \ldots * \A_p(L_n) = \B \hookrightarrow X$
which is also nonzero in homology.
Then the composition:
  $$ \psi|_{\A_p( E(G) )}: \A_p \bigl( E(G) \bigr) {\to} \A_p(L_1) * \ldots * \A_p(L_n) {\hookrightarrow} X $$
is not the zero map in homology, since~$\psi|_{\A_p( E(G) )}$ is a homotopy equivalence with its image.
Since~$\psi |_{\A_p( E(G) )} = \psi \circ e$ for $e : \A \bigl( E(G) \bigr) \subseteq \A_p(H)$,
we see that~$\psi$ also is nonzero in homology.
Thus we have established:

\begin{remark}[$\A_p(H)$-form of hypothesis for Theorem~2]
\label{rk:ApHformhypThm2}
For the join~$X$ of the~$X_i = \A_p(\Aut_H(L_i))$:

\hfil  Under the hypothesis of Theorem~2, $\psi : \A_p(H) \to X$ induces a nonzero map in homology. \hfill \donerk
\end{remark}

\noindent
This~$\A_p(H)$-form of the hypothesis appears as~(4.2)(3) in~\cite{Segev};
and also as~(D) in our main analysis for Theorem~2 below.
And it is the form that is actually used in the proof of Theorem~2.

\bigskip

\noindent
We complete our preliminary discussion with several further standard features related to~$X$.

We will need for certain later arguments the precise definition of the above poset isomorphism~$\psi$:

\begin{definition}[The map~$\psi : \A_p(H) \to X$]
\label{defn:iEandpsi}
First define for $E\in\A_p(H)$:

\centerline{
   $i_E :=$ the largest index~$i$, such that~$\pi_i(E) \neq 1$ (i.e.~$E \nleq C_H(L_i)$).
            }

\noindent
Then (cf.~\cite[p960]{Segev}) we select {\em only\/} the corresponding projection~$\pi_{i_E}$ for application to~$E$:
that is, we define~$\psi(E) := \pi_{i_E}(E)$; this image of course lies in~$\A_p(\Aut_H(L_{i_E})) = X_{i_E} \subseteq X$.
\donerk
\end{definition}

\noindent
We emphasize in particular that the term~$X_{i_E}$ of the join~$X$ which contains~$\psi(E)$ is uniquely determined:
since for any other~$i \neq i_E$, we do {\em not\/} use the projection~$\pi_i(E) \in X_i$.
Also:
It is standard that the particular homotopy equivalence~$\psi$ depends on the ordering of the factors of the direct product;
but of course any chosen ordering does give such a homotopy equivalence.

Next (cf. \cite[p957]{Segev})
the order complex~$K:=\K(X)$ of the join~$X$ determines some useful standard subcomplexes, as follows:

\begin{definition}[$K_0$ and~$\hat{K}_0$]
\label{defn:K0hatK0}
For a fixed~$i$, let~$X_{\hat{i}}$ denote the maximal sub-join determined by the~$X_j$ for~$j \neq i$, 
and then take: 
\begin{center}
  $K_0 := \bigcup_{i=1}^n\ \K( X_{\hat{i}})$ .
\end{center}
Notice that~$K_0 \neq \emptyset$ if and only if~$n \geq 2$ (see also Remark \ref{rk:conseqOp'G=1}(iii)).
The chains~$c$ in~$K_0$ are ``sub-maximal'', in the sense that~$c$ determines at least one index~$i_c$,
such that the members of~$c$ do not lie in~$X_{i_c}$.

Finally (for any~$n \geq 1$) we indicate a standard contractible (cf.~\cite[p958]{Segev}) subcomplex~$\hat{K}_0$ of~$K=\K(X)$:
We first fix some arbitrary vertex~$v_i \in X_i$ for each~$i$.
(We have~$X_i \neq \emptyset$ by Remark~\ref{rk:conseqOp'G=1}(iii).)
Then we set:
\begin{center}
  $\hat{K}_0 := \bigcup_{i=1}^n\  \St_K(v_i)$ ,
\end{center}
where~$\St$ denotes the topological star.
Notice in particular that when~$n \geq 2$ (so that~$K_0 \neq \emptyset$ as noted above), 
we get~$\K(X_{\hat{i}}) \subseteq \St_{\K(X_i)}(v_i) * \K(X_{\hat{i}}) = \St_K(v_i) \subseteq \hat{K}_0$,
so that we have an inclusion~$f : K_0 \subseteq \hat{K}_0$. 
Note then that for~$c : K_0 \subseteq K$, we have~$c = d \circ f$, where $d : \hat{K}_0 \subseteq K$. 
Here the contractibility of~$\hat{K}_0$ guarantees that~$d$ is zero in homology;
so that also~$c$ is zero in homology.
\donerk
\end{definition}

\noindent
We note that~$K=\K(X)$, $K_0$, and~$c$ are all needed in our summary diagram in later Remark~\ref{rk:diagramSegev}.

\bigskip

With the above preliminary discussion of constructions related to~$X$ completed,
we begin the main proof of Theorem~2:

\bigskip

\noindent
{\em The main logical analysis for the proof of Theorem~2\/}.
The proof of Theorem~2 in~\cite{Segev} begins on p960 by setting up for a reduction to the hypothesis of Theorem~1
(and hence in effect to condition~(B) of Remark~\ref{rk:initimpl}).
We will instead indicate a slight generalization of the argument, which leads more directly to (B).
By either route, the argument depends in several ways (as was the case for Theorem~1)
on having made the reduction to the nontrivial-action case~$\A_p(H) \subsetneq \A_p(G)$.  

So again we give the earlier-promised explicit discussion 
of the seemingly-omitted trivial-action case~$\A_p(H) = \A_p(G)$. 
Here, we can simply use the~$\A_p(H)$-form of the hypothesis for Theorem~2 that we derived in Remark~\ref{rk:ApHformhypThm2}:
Note that in any case for~$\A_p(H)$, 
nonzero-ness of the induced map~$\psi_*$ there forces nonzero-ness of its domain-space~$\tilde{H}_* \bigl( \A_p(H) \bigr)$.
So when~$\A_p(H) = \A_p(G)$, we in fact have the nonzero homology of~$\A_p(G)$ required for~(Goal).
Thus for this case, the hypothesis already {\em includes\/} the conclusion~(H-QC) of the result.
That is, Theorem~2 becomes essentially a tautology here: so that this case doesn't really {\em require\/} proof.
However, again the overall proof of Theorem~2 seems incomplete, without some mention of this ``not-required'' fact.

We also now recall from Remark~\ref{rk:conseqH<G}(ii)
that our reduction here to~$\A_p(H) \subsetneq \A_p(G)$ also reduces us to the case of~$n \geq 2$ components.
This latter reduction covers for example the seemingly-omitted case of~$n = 1$ at~p957 in~\cite{Segev}, 
where~$n \geq 2$ is assumed to hold throughout that paper; 
and similarly we saw in Definition~\ref{defn:K0hatK0}
that the reduction to~$n \geq 2$ covers the case of~$K_0 = \emptyset$, which is seemingly omitted on~p958.

Since we have reduced to the nontrivial-action case~$\A_p(H) \subsetneq \A_p(G)$, then just as for Theorem~1,
by Remark~\ref{rk:conseqH<G}(i) we again get nontriviality of the decomposition $\A_p(G) = Y \cup Z$;
and we can indeed use the generic-context of the Mayer-Vietoris sequence as in Remark~\ref{rk:MVseqfordecompYZ}.

This time, we will investigate the homology of the right-hand terms
in the summary-diagram in Remark~\ref{rk:diagramSegev}---namely~$X$ and~$K_0$,
constructed in Definitions~\ref{defn:Kasjoin} and~\ref{defn:K0hatK0}.
We reproduce, as condition~(D) below, the $\A_p(H)$-form of the hypothesis for Theorem~2,
which we had obtained in Remark~\ref{rk:ApHformhypThm2} above.
Our discussion of~$\hat{K}_0$ in Definition~\ref{defn:K0hatK0} corresponds roughly to Segev's at~p958:
in particular we observed there that the inclusion~$c : K_0 \subseteq \K(X)$ is zero in homology,
and we reproduce this as~(E) below.
At this point, Segev can finish by establishing the further containment~(C):
\begin{quote}
(C) \quad $\psi \bigl( \K(V_0) \bigr) \subseteq K_0$.
\end{quote}
\begin{quote}
(D) \quad The induced map~$\psi_* : \tilde{H}_* \bigl( \A_p(H) \bigr) \to \tilde{H}_*(X)$ is nonzero.
\end{quote}
\begin{quote}
(E) \quad The induced map~$c_* : \tilde{H}_*(K_0) {\to} \tilde{H}_*(X)$ is zero.
\end{quote}
For note then that we get~$c_* \circ\ ( \psi |_{V_0} )_* = 0$ using~(C + E);
so by commutativity in the diagram in Remark~\ref{rk:diagramSegev} below, we get~$\psi_* \circ\ b_* = 0$---and then
nonzero-ness of~$\psi_*$ in~(D) forces non-surjectivity of~$b_*$, as required for~(B).
Thus by the implications in Remark~\ref{rk:initimpl}, we get~(Goal), completing the proof of Theorem~2.

\begin{remark}[Summary of logical implications]
\label{rk:summarylogimplic}
The previous paragraph in particular exhibits a further implication-relation among our various conditions---so that
we can now write:
\begin{center}
(C + D + E) $\Rightarrow$ (B) $\Rightarrow$ (A$^\prime$) $\Leftrightarrow$ (A) $\Rightarrow$ (Goal) ;
\end{center}
extending our earlier sequence in Remark~\ref{rk:initimpl}.
\donerk
\end{remark}

The maps involved in these relations are indicated in:

\begin{remark}[Overall commutative diagram]
\label{rk:diagramSegev}
We have:

\begin{equation*}
\xymatrix{
\tilde{H}_*(Y_0) \ar[d]_{a_*} \ar[r]^{{(r|_{Y_0})}_*} & \tilde{H}_*(V_0) \ar[d]_{b_*} \ar[r]^{ {(\psi|_{V_0})}_*}
                                                      & \tilde{H}_*(K_0) \ar[d]^{=0}_{c_*} \\
\tilde{H}_*(Y) \ar[r]^{r_*}_{\groupiso}               & \tilde{H}_*(\A_p(H)) \ar[r]^{\psi_*}_{ \neq 0}
                                                      & \tilde{H}_*(K)
          }
\end{equation*}
where in the right half, we have included the desired properties of the maps given in~(D,E).
\donerk
\end{remark}

\noindent
To our outline above of the proof of Theorem~2, we now add some comments on certain details: 

Segev's proof does not explicitly use the inclusion~$\psi \bigl( \K(V_0) \bigr) \subseteq K_0$ that we indicated in~(C); 
instead he establishes in~(4.3) of~\cite{Segev} %
just the terminal-segment~$\psi(\ \K \bigl( \D_p(H) \bigr)\ ) \subseteq K_0$ of that inclusion.
His proof there begins by assuming that the case~$\D_p(H) \neq \emptyset$ must hold; 
so we indicate an explicit treatment to eliminate the seemingly-omitted case where~$\D_p(H) = \emptyset$:
Namely via our earlier reduction to nontrivial-action~$H < G$,
we may (as noted in Remark~\ref{rk:conseqH<G}(ii)) assume that we have~$n \geq 2$ components---so that
we can consider a pair of distinct components---say~$L_1,L_2$. 
We had also seen using Remark~\ref{rk:conseqOp'G=1}(ii) that components have order divisible by~$p$;
so taking~$a_i$ of order~$p$ in each~$L_i$, we have $A := \langle a_1 a_2 \rangle \in \A_p(H)$.
But then~$1 = C_A(L_1 L_2) = C_A(L_i)$ for each~$i$, giving the condition defining~$A \in \D_p(H)$.
That is, we may indeed assume that~$\D_p(H) \neq \emptyset$. 

Segev then finishes the proof as follows: 
In the final commutative-diagram argument above using~(C + E), 
he in effect begins at~$\D_p(H)$ rather than~$V_0$---and obtains instead non-surjectivity for~$i_*$
induced by the inclusion~$i : \D_p(H) \subseteq \A_p(H)$---that is, the hypothesis for Theorem~1. 
Thus he obtains~(H-QC) in Theorem~2 as corollary of his Theorem~1.  
(So the use of condition~(B) for the proof of Theorem~2 remains implicit--i.e.~via the proof of Theorem~1.)

By contrast, our extension above of his argument obtains the inclusion~$\psi \big( \K(V_0) \bigr) \subseteq K_0$ in~(C)
by applying his inclusion~$\psi(\K \bigl( \D_p(H) \bigr)) \subseteq K_0$ in Theorem~2
after his earlier inclusion~$j : V_0 \subseteq \D_p(H)$ from the proof Theorem~1. 
Thus we are quoting only part of the {\em argument\/} for Theorem~1, as opposed to Segev's quoting the {\em result\/}.
To see that our use of~$V_0$ and~$j$ from that argument in Theorem~1 is in fact valid for Theorem~2,
we need to recall that that earlier argument depended on having~$V_0 \neq \emptyset$, via the reduction to~$H < G$---and
we did in fact we also obtain~$H < G$ independently for the proof of Theorem~2.
And note furthermore that the inclusion-argument for~$j$ in Theorem~1 depended only on the definition of~$\D_p(H)$---and
not on the specific restriction on~$\D_p(H)$ in the hypothesis of Theorem~1.
So we can indeed use that inclusion also in the proof of Theorem~2. 

\bigskip

We conclude the section with:

\bigskip

\noindent
{\em Further remarks on generalizing Segev's methods\/}

\bigskip

In our generalizations starting in the next section, 
we will proceed via essentially the same sequence of implications as indicated in Remark~\ref{rk:summarylogimplic} above.
However, in contrast with Segev, we will now mainly take~$H$ to be the ``local'' version of the kernel on components---that is,
the kernel of the permutation action on the~$G$-orbit of a single component.
So we will make appropriate adjustments to the constructions involved;
notably to the factors in the join~$X$.

The diagonal subposet~$\D_p(H)$ will not usually be involved;
instead, we will want to establish the inclusion in~(C) ``directly'';
that is, without proceeding as in~\cite{Segev} via the inclusion of such an intermediate poset.
So to get~(Goal), we will look for hypotheses on our~$H$,
which lead to a suitable poset~$X$---allowing us to prove~(D).
Conditions~(C) and~(E) will then follow easily from the naturality of our construction of~$X$ (and hence~$K_0$).

This sequence (C + D + E) of implications that was used for Theorem~2 provides one route---which
we include in summarizing below several possible routes to~(Goal) suggested by our analysis.
Namely we might proceed via any of the following approaches:
\begin{itemize}
\item Prove~(A) (or~(A$^\prime$)):
      e.g.~ by showing that~$a_* : \tilde{H}_*(Y_0) \to \tilde{H}_*(Y)$ is the zero map, while~$\tilde{H}_*(Y) \neq 0$;
      or more generally, showing just that~$a_*$ is not surjective.
\item Prove~(B):
      e.g.~showing~$b_* : \tilde{H}_*(V_0) \to \tilde{H}_* \bigl( \A_p(H) \bigr)$ is the zero map,
      while~$\tilde{H}_* \bigl( \A_p(H) \bigr) \neq 0$;
      or more generally, showing just that~$b_*$ is not surjective.
\item Prove the following sequence:
\begin{itemize}
\item (C): showing that inclusion---based on a natural construction of~$X$ (and hence~$K_0$);
\item (D): showing that~$\psi_* \neq 0$---based on hypotheses on~$H$, giving the construction of~$X$; 
\item (E): showing that~$c_* = 0$---again based on natural constructions related to~$X$.
\end{itemize}
\end{itemize}

\bigskip

In proving our Theorem~\ref{theoremNormalCaseReduction} below, we will in fact follow the sequence~(C + D + E):
In particular, we will prove~(C) and~(E) via an appropriate new choice of~$X$ and~$K_0$ as mentioned above.
The construction of these posets is inspired by the join-construction of the corresponding posets in Segev's work,
which we described in Definitions~\ref{defn:Kasjoin} and~\ref{defn:K0hatK0};
we will need comparatively slight variations on the mechanics of those earlier join-constructions. 
Then we will obtain~(D)---by using %
a component-related technical hypothesis on~$H$---which essentially defines our new~$X$ as an analogous join:
where now the old~$X_i = \A_p \bigl( \Aut_H(L_i) \bigr)$
are replaced by subposets that we call~$\A_i$---which this time
are defined using essentially the~$\A_p \bigl( \Aut_{C_i(H)}(L_i) \bigr)$, for certain {\em sub\/}groups~$C_i(H)$ of~$H$.
Finally, Theorem~\ref{theoremNormalCaseReduction} can be regarded
as an analogue of the ``core'' part of Theorem~2 in~\cite{Segev}: 
Thus rather than assuming hypotheses on the individual~$L_i$, 
we will instead assume that a suitable mapping~$\psi_H$, from~$\A_p(H)$ itself to the new~$X$, is nonzero in homology. 
That is, we design the construction of~$X$ so that we will ``automatically'' get condition~(D).
Thus for this initial result of ours, condition~(D) is the key point.
(In later Corollary~\ref{cor:genTheorem2Segev},
we will give an analogue more closely following the original hypotheses of Theorem~2 in~\cite{Segev}---that is,
where we {\em do\/} assume hypotheses on the individual components~$L_i$.)

\begin{remark}[Good behavior for~$\psi$ in~(D)]
\label{rk:goodbehaviorpsi}
Our setup of implications-analysis in Remark~\ref{rk:summarylogimplic} already involves a number of different maps;
and we will want to be clear about the different behaviors %
that we will want to be establishing for them.
So we quickly review this aspect in our earlier conditions: 
\begin{quote}
(A,A$^{\prime}$,B): \quad We seek a {\em non-surjective\/} map. 
\end{quote}
\begin{quote}
(D): $\phantom{\text{A$^{\prime}$,B}}$  \quad We seek a {\em nonzero\/} map~$\psi_*$. 
\end{quote}
\begin{quote}
(E): $\phantom{\text{A$^{\prime}$,B}}$  \quad We seek a {\em zero\/} map~$c_*$. 
\end{quote}
In fact we will mainly be focusing on~(D), so that our ``usual'' expectation will then be a nonzero map.
So we will sometimes use the informal expression \textit{good behavior}, 
to express the desired outcome of~$\psi_* \neq 0$ in that situation.
\donerk
\end{remark}

\section{The generalized method, leading to elimination results}
\label{sectionMainTool}

From this section on, we begin to provide alternative versions of Segev's results in~\cite{Segev}.
While Segev's Theorem~2 (which we stated as Theorem~\ref{theoremSegevOriginal} in our Introduction)
requires a common behavior for all the components of~$G$,
we show that in general we can focus on the behavior of a single component~$L$---if
we further assume a suitable inductive hypothesis.
Indeed starting in the following Section~\ref{sec:concreteconsqH1etc},
we will use inductive hypotheses such as~(H1) mentioned earlier in our Introduction.

However for our fundamental result Theorem~\ref{theoremNormalCaseReduction} in this section,
we use a more technical kind of inductive hypothesis---somewhat more directly akin to that for Theorem~2 of~\cite{Segev},
(which we discussed in the previous Section~\ref{sectionDiscussionSegev}).
The resemblance is in fact closest to the~$\A_p(H)$-form deduced in Remark~\ref{rk:ApHformhypThm2}
from the original hypothesis of Theorem~2; we had also stated that form as condition~(D) there.
Recall that~(D) required nonzero-ness in homology of the map induced from the poset map~$\psi$,
which takes~$\A_p(H)$ into the formal join~$X$ of the~$\A_p \bigl( \Aut_H(L_i) \bigr)$ for the components~$L_i$.
Similarly, when we now choose as ``$H$'' the kernel on the~$G$-conjugates~$L_i$ of our single component~$L$, 
our hypothesis in~\ref{theoremNormalCaseReduction} will require nonzero-ness in homology
of a map~$\psi_H$ defined on~$\A_p(H)$---into a corresponding new choice of~$X$.
This time the join~$X$ will arise from factors~$\A_i$ which, though still based on the~$\A_p( \Aut_H(L_i) )$ as in Theorem~2 above, 
are constructed in a somewhat more complicated inductive way---a way we can exploit, in verifying the hypothesis for applications. 

We emphasize one further feature of working with just a single component~$L$:
Note that the product~$N := L_1 \cdots L_t$ of the~$G$-conjugates of~$L$ may well be proper in~$F^*(G)$, 
so that we can't assume faithful action on~$N$.
For example, in the discussion of constructions after Definition~\ref{defn:Kasjoin},
we were able to use faithfulness to embed~$H$ in the formal direct product of the~$\Aut_H(L_i)$;
but this time such an embedding holds only for the faithful quotient~$H/C_H(N)$.  
For this reason, our new join~$X$ in the statement of~\ref{theoremNormalCaseReduction}
also includes a term~$\A_p \bigl( C_H(N) \bigr)$; which later we will similarly call~$\A_0$.

\bigskip

After the proof of Theorem~\ref{theoremNormalCaseReduction}, 
the section proceeds to Proposition~\ref{propositionPropertiesEM}---which 
gives one possible approach to verifying the hypotheses of~\ref{theoremNormalCaseReduction}
(though we won't actually be quoting Proposition~\ref{propositionPropertiesEM} in the later sections of this paper).
That approach more explicitly involves the filtration of~$H$ given by certain centralizer-subgroups~$C_i(H)$ below,
and the building-up of the map~$\psi_H$ via successive approximations~$\psi_i$. 
Finally, we will close the section with Theorem~\ref{theorem1},
which provides a generalization of Theorem~1 of~\cite{Segev}.

\bigskip

Thus we begin the work of the section by stating our main technical result below:

\begin{theorem}
\label{theoremNormalCaseReduction}
Let~$L\leq G$ be a component of order divisible by~$p$, with~$\{ L_1 , \ldots , L_t \}$ its~$G$-orbit under conjugation.
Set~$H := \bigcap_i N_G(L_i)$ and~$N := L_1 \ldots L_t$, with posets~$\A_i$ as in Definition ~\ref{defn:CalAi},
and poset map~$\psi_H$ as in Definition~\ref{defn:psiH}.
Assume also that:
\begin{center}
  $\psi_H : \A_p(H)  \to X := \A_p \bigl( C_H(N) \bigr) *  \A_1 * \ldots * \A_t$ satisfies~$(\psi_H)_* \neq 0$ in homology.
\end{center}
(Indeed, it suffices if there exists a subposet~$\B \subseteq \A_p(H)$ with~$(\psi_H |_{\B})_* \neq 0$.)

Then~$G$ satisfies~(H-QC).

\smallskip

\noindent
Hence such an~$L$ is eliminated from a counterexample to~(H-QC).
\end{theorem}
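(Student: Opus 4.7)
The plan is to follow the (C + D + E) route laid out in Section~\ref{sectionDiscussionSegev}, exactly as advertised in the final paragraphs of the discussion. The hypothesis of the theorem is essentially condition (D) already, so the real work is to set up the Mayer–Vietoris machinery for this new choice of $H$ and $X$, then to verify conditions (C) and (E). First I would dispose of the trivial-action case: if $\A_p(H) = \A_p(G)$, then $(\psi_H)_* \neq 0$ forces $\tilde{H}_*(\A_p(G)) = \tilde{H}_*(\A_p(H)) \neq 0$, so (H-QC) holds immediately. The same remark covers the subposet variant, where $(\psi_H|_\B)_* \neq 0$ forces nonzero homology of $\A_p(H)$. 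So I may assume $\A_p(H) \subsetneq \A_p(G)$; equivalently, some element of $G$ has a nontrivial orbit on $\{L_1,\ldots,L_t\}$.

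Next I would set up the decomposition $\A_p(G) = Y \cup Z$ with $Y = \N_G(H) \simeq \A_p(H)$ and $Z = \A_p(G)-\A_p(H)$, together with $Y_0 = Y \cap Z$, $V_0 = r(Y_0)$ where $r(E) = E \cap H$. The nontrivial-action reduction (analogue of Remark~\ref{rk:conseqH<G}(i)) guarantees all four pieces are nonempty, so the Mayer–Vietoris sequence of Remark~\ref{rk:MVseqfordecompYZ} is nondegenerate. By the analysis of Remarks~\ref{rk:initimpl} and~\ref{rk:summarylogimplic}, it suffices to establish that $b_* : \tilde{H}_*(V_0) \to \tilde{H}_*(\A_p(H))$ fails to be surjective, and this in turn will follow from the commutative diagram of Remark~\ref{rk:diagramSegev} once (C), (D), (E) are verified. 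Condition (D) is exactly the hypothesis. For (E), I would choose base vertices $v_i$ in each factor of the new join $X = \A_p(C_H(N)) * \A_1 * \cdots * \A_t$, form the contractible subcomplex $\hat{K}_0 = \bigcup \St_K(v_i)$, and use the factorization $c : K_0 \hookrightarrow \hat{K}_0 \hookrightarrow K$ to conclude $c_* = 0$; the only subtlety is to confirm each $\A_i$ (and the $C_H(N)$-factor) contains a vertex, which will follow from $p \mid |L|$ and the construction of $\A_i$ from Definition~\ref{defn:CalAi}.

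The main obstacle is (C): showing $\psi_H(\K(V_0)) \subseteq K_0$. Here I would mimic Segev's diagonal-type argument from the proof of Theorem~1, adapted to the new $\A_i$ and $\psi_H$. For $B = A \cap H \in V_0$ with $A \in Y_0$, pick $a \in A \setminus H$; then $a$ induces a nontrivial permutation on $\{L_1,\ldots,L_t\}$ with some orbit $\O$ of size $p$, and the abelianness of $A$ forces $B \leq C_H(a)$, so the projections $\pi_i(B)$ for $L_i \in \O$ are $a$-conjugate and hence correlated. The upshot is that, for any chain $c$ in $\K(V_0)$, the image $\psi_H(c)$ cannot meet all factors $\A_i$ independently: some index $i \in \O$ is "used up'' by the diagonal relation and the chain lies inside some sub-join $\K(X_{\hat{i}}) \subseteq K_0$. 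The core difficulty is to make this argument go through for the inductively constructed $\A_i$ (based on $\A_p(\Aut_{C_i(H)}(L_i))$ rather than the full $\A_p(\Aut_H(L_i))$), and for the particular index-selection rule that defines $\psi_H$ — one must verify that the index $i_E$ picked out by $\psi_H$ always lands in an "available'' slot. This is precisely where the design of $\A_i$ and $\psi_H$ in Definitions~\ref{defn:CalAi} and~\ref{defn:psiH} has been tuned to make the Segev-style containment survive.

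Once (C), (D), (E) are in place, the diagram in Remark~\ref{rk:diagramSegev} gives $\psi_* \circ b_* = c_* \circ (\psi_H|_{V_0})_* = 0$ while $\psi_* \neq 0$, so $b_*$ is not surjective; this yields (A) via $r_*$, and then the Mayer–Vietoris sequence produces nonzero $\tilde{H}_*(\A_p(G))$, which is (H-QC). The subposet $\B$-refinement runs identically, restricting the domain of $\psi_H$ without affecting the diagram or the (C), (E) arguments since those depend only on the target $X$ and the behavior of $\psi_H$ on inflations.
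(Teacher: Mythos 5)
Your plan — trivial-action reduction, Mayer–Vietoris for $\A_p(G)=Y\cup Z$, then (C)$+$(D)$+$(E) via the diagram of Remark~\ref{rk:diagramSegev} — is exactly the route the paper takes, and your handling of the trivial-action case, of (D), and of (E) is correct (for (E) one does need to check that each $\A_i$ is nonempty, which follows from $\A_p(L_i)\subseteq\A_i$ and $p\mid|L_i|$, and the convention that $\A_0$ is simply dropped if empty).

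The one genuine gap is condition (C), which you flag as the ``core difficulty'' and then only gesture at. Two remarks. First, your worry about the inductive construction of the $\A_i$ is misplaced: (C) never inspects the internal structure of $\A_i$; it only needs the fact that $\psi_H(E)$ lands in $\A_{i_E}$ where $i_E$ is the \emph{largest} index $j$ with $E\nleq C_H(L_j)$. Second, the phrase ``some index $i\in\O$ is used up'' doesn't pin down the right index, and the argument actually requires choosing it uniformly for the whole chain. Here is the missing step. Take a chain $\sigma=(A_0<\cdots<A_l)\in\K(V_0)$; by definition of $V_0$ there is $E\in\A_p(G)$ with $E\nleq H$ and $A_l=E\cap H$, so \emph{every} $A_k\leq A_l\leq E$ and hence every $A_k$ centralizes a common $e\in E-H$. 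Let $i_1<\cdots<i_p$ be the indices of a nontrivial $e$-orbit. Then $i_{A_k}\neq i_1$ for all $k$: if $A_k\leq C_H(L_{i_1})$ this is immediate; otherwise $A_k\nleq C_H(L_{i_1})$, but since $A_k$ commutes with $e$ and $e$ carries $L_{i_1}$ around the orbit, $A_k$ is also nontrivial on $L_{i_p}$, so the \emph{largest} such index is $\geq i_p>i_1$. Thus the \emph{entire} chain $\psi_H(\sigma)$ avoids the single factor $\A_{i_1}$ and lies in $\K(\A_{\widehat{i_1}})\subseteq K_0$. Without fixing the smallest index of a common $e$-orbit, different links of the chain could a priori avoid different factors, and the argument would not place $\psi_H(\sigma)$ inside any one $\K(\A_{\hat{i}})$.
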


We temporarily postpone the proof, while we provide various relevant background details.
 
\bigskip

\noindent
{\em Preliminaries: construction of posets and maps for Theorem~\ref{theoremNormalCaseReduction}\/}

\bigskip

\noindent
We begin with a few further comments relevant to hypotheses:

The results in~\cite{Segev} assume the hypothesis~$O_{p'}(G) = 1$.
However, in our Theorem~\ref{theoremNormalCaseReduction} here,
this can be relaxed to only requiring that some components $L$ of~$G$ has order divisible by~$p$;
as we will emphasize in the proof below.
In particular, in paralleling arguments from~\cite{Segev},
we must be careful about uses there of consequences of~$O_{p'}(G) = 1$ in Remark~\ref{rk:conseqOp'G=1}
(especially since these are sometimes not explicit in~\cite{Segev}). 
In the case of~\ref{rk:conseqOp'G=1}(i), properties related to faithful action 
can be handled by careful treatment of centralizers which may now be nontrivial.
For example, our component~$L$ might not now be simple---but since we assume~$O_p(G) = 1$ in results on~(H-QC), 
we will at least have the following partial replacement for~\ref{rk:conseqOp'G=1}(i):

  (i)$_L$   \quad $Z(L)$ is a~$p'$-group.

\noindent
On the other hand, \ref{rk:conseqOp'G=1}(ii) is now directly assumed in our hypothesis. 
So we will call that:

  (ii)$_L$  \quad $p$ divides the order of the component~$L$.

\noindent
And then we immediately obtain the consequence~\ref{rk:conseqOp'G=1}(iii)---which we will name similarly:

  (iii)$_L$ \quad $\A_p(L) \neq \emptyset$, and hence~$\A_p(H) \neq \emptyset$;

\noindent
as before this will again be fundamental. 

Furthermore, later in the paper we will wish to work under hypothesis~(H1): 
and then we will get by Remark~\ref{remarkH1andHLp} that either~(H-QC) holds for~$G$; or else~$O_{p'}(G) = 1 = O_p(G)$---in which case
by Lemma~\ref{lemmaOpandp}, the components of~$G$ have order divisible by~$p$.
That is, we will then get the condition~(ii)$_L$---giving us that hypothesis,
when we want to set up to apply Theorem~\ref{theoremNormalCaseReduction}. 

%Now we briefly explain the terminology of $C_i(H)$ and $\A_i$ of the above theorem.
%Let $N = L_1\ldots L_t$, and for $i\geq 0$ set $C_i(H):=C_H(L_{i+1}\ldots L_t)$, which is a subgroup of $H$.
%Set $\A_0 := \A_p(C_H(N))$.
%For $i>0$, the poset $\A_i$ is the image of the map $\pi_i:\A_p(C_i(H)) - \A_p(C_{i-1}(H)) \to \A_p(\Aut_{C_i(H)}(L_i))$
%induced by taking the quotient by the centralizer $C_{C_i(H)}(L_i) = C_{i-1}(H)$ (see Definition \ref{definitionCalAi}).
%The map $\psi_H:\A_p(H)\to \A_0*\A_1*\ldots*\A_t$ sends an element $A\in\A_p(H)$ to $\pi_i(A)\in\A_i$,
%where $i$ is the largest index such that $A\in \A_p(C_i(H)) - \A_p(C_{i-1}(H))$.
%Note that $C_i(H)$ denotes a centralizer like subgroup of $H$ and it is not the chain complex group of any poset.
%{\color{red} Since we don't use chain complexes here, is this comment necessary?}

\bigskip

Now we introduce some further terminology;
including the definition of the~$\A_i$ needed in the statement of Theorem~\ref{theoremNormalCaseReduction} above. 
We will use these concepts throughout this and the remaining sections;
the image-posets~$\Imageposet{G}{L}$ and~$\A_i$ will play a fundamental role
in our constructions and arguments in the proof.

\begin{definition}
\label{defn:CalA}
For any subgroup~$T$ of~$G$, we view the usual projection map just on~$\A_p$-posets:
  \[ \pi_T : \A_p \bigl( N_G(T) \bigr)\  \to\ \A_p \bigl( \Aut_G(T) \bigr) \cup \{ 1 \} . \]
Define the \textit{image poset}~$\Imageposet{G}{T}$ as the restriction of~$\pi_T$
to~$ \A_p \bigl( N_G(T) \bigr) - \A_p \bigl( C_G(T) \bigr)$. 
Equivalently:
  \[   \Imageposet{G}{T} := \Im[\ \pi_T :\ \A_p \bigl( N_G(T) \bigr)\ \to\ \A_p \bigl( \Aut_G(T) \bigr) \cup \{ 1 \}\ ]\ -\ \{ 1 \} . \]
\end{definition}

\noindent
In Lemma~\ref{lemmaPOuterPreimage},
we will give some alternative descriptions of the image-poset~$\Imageposet{G}{T}$,
in the case where~$T$ is any subgroup of~$G$ with~$p$ not dividing the order of~$Z(T)$.
In addition, if $T$ is quasisimple,
we will show that~$\tilde{H}_*( \Imageposet{G}{T} ) \neq 0$ (see Theorem~\ref{thm:HomologyCalA}).
For later applications when~$T$ is a component~$L$ of~$G$, 
this will be relevant to establishing the hypothesis of Theorem~\ref{theoremNormalCaseReduction}.

Next we define the posets~$\A_i$, based on corresponding centralizer-subgroups~$C_i(H)$.
We note that the definition of these objects depends on
the choice of the ordering of our~$G$-orbit of components~$L_1,\ldots,L_t$ of~$G$;
and the same holds for the definition of the subgroups~$C_i(H)$ and maps~$\psi_{C_i(H)}$ shortly thereafter.
But this will not be an issue---cf.~our comment after Definition~\ref{defn:iEandpsi}:
namely we will only need the fact that the~$\psi_{C_i(H)}$ are poset maps---which holds for any choice of that ordering.

\begin{definition}
\label{defn:CalAi}
Let $L_1,\ldots, L_t$ be a $G$-orbit of components of $G$.
We define:
\begin{itemize}
\item $H := \bigcap_i N_G(L_i)$ and~$N := L_1 \ldots L_t \leq H$.
\item For~$0 \leq i \leq t$, let $C_i(H) := C_H(L_{i+1} \ldots L_t)$, with~$C_t(H) := C_H(1) = H$. \\
   Note~that $L_i \leq C_i(H)$. 
\item We will use the abbreviation~$\A_0 := \A_p \bigl( C_H(N) \bigr)$, \\
   and write~$\pi_0$ for the identity map~$\A_p \bigl( C_H(N) \bigr) \to \A_p \bigl( C_H(N) \bigr)$.
\item For $i \geq 1$, let~$\pi_i$ be the map:
   \[ \pi_i : \A_p \bigl( C_i(H) \bigr) \to \A_p \bigl( \Aut_{C_i(H)}(L_i) \bigr) \cup \{ 1 \} , \]
  which is induced via taking the quotient by~$C_{C_i(H)}(L_i) = C_{i-1}(H)$.
\item For~$i \geq 1$, the image-poset~$\A_i$ is the poset:
   \[ \A_i := \Imageposet{C_i(H)}{L_i} . \]
  Using~(i)$_L$ for faithfulness in Lemma~\ref{lemmaPOuterPreimage}, we may write~$\A_p(L_i) \subseteq \A_i$. \\
  Note also that an element~$E$ of~$\A_i$ is the quotient
  of an element of~$\A_p(C_i(H))$ by~$C_{C_i(H)}(L_i)$;
  so we have~$\A_i \subseteq \A_p(\Aut_{C_i(H)}(L_i)) \subseteq \A_p(\Aut_H(L_i))$.%
\footnote{ 
  Further the restriction~$\pi_i : \A_p \bigl( C_i(H) \bigr) - \A_p \bigl( C_{C_i(H)}(L_i) \bigr) \to \A_i$
  gives a quotient map in the topological sense.
          }
\end{itemize}
\end{definition}

\noindent
For the next few paragraphs, we explore the general properties of the objects defined above.
So we fix a component~$L$ of~$G$, and an ordering of its~$G$-orbit~$L_1,\ldots, L_t$.
We may take~$L_t := L$.
Finally let~$H$, $N$, $C_i(H)$ and~$\A_i$ be as in the definitions above.

\begin{remark}
\label{remarkDescriptionAiCi}
Note that:
  \[ C_0(H) = C_H(L_1 \ldots L_t) = C_H(N) = C_G(N) , \]
and by convention:
  \[ C_t(H) = C_H(1) =  H . \]
Moreover, we have a normal series of~$H$ given by:
  \[ C_H(N) = C_0(H) \normal C_1(H) \normal \ldots \normal C_t(H) = H . \]
We also have that~$H$, $N$ and~$C_H(N)$ are normal subgroups of~$G$.

Further~$L_i \leq C_i(H)$ and~$C_{C_i(H)}(L_i) = C_{i-1}(H)$;
so~$\Aut_{C_i(H)}(L_i) = C_i(H)/C_{i-1}(H)$,
and~$\A_i$ is the image of~$\pi_i$ restricted to~$\A_p \bigl( C_i(H) \bigr) - \A_p \bigl( C_{i-1}(H) \bigr)$,
since~$\pi_i^{-1}(1) = \A_p \bigl( C_{i-1}(H) \bigr)$.
\donerk
\end{remark}

\bigskip

With the above properties established, we will now able to construct the map~$\psi_H$---still
needed for the statement of Theorem~\ref{theoremNormalCaseReduction}.
This will be a variant of Segev's map~$\psi : \A_p(H) \to X$,
which we had described in Definition~\ref{defn:iEandpsi}. 

For the purposes just of proving Theorem~\ref{theoremNormalCaseReduction},
it would be sufficient to now simply define~$\psi_H$ essentially via the condition in Definition~\ref{defn:iEandpsi}:
namely for~$E \in \A_p(H)$, as the projection of~$E$ on~$\Aut_H(L_{i_E})$, 
where~$i_E$ is the largest index~$j$ for which~$E \nleq C_H(L_j)$. 
However, for the purposes of verifying the $\A_i$-related hypotheses of that theorem in later applications,
it will be convenient to see below that we can ``inductively'' build up~$\psi_H$ from successive approximations~$\psi_i$,
using the filtration of~$H$ given by the~$C_i(H)$;
the definition via~$i_E$ then emerges naturally from this process:

\begin{definition}[The maps~$\psi_i$ and~$\psi_H$]
\label{defn:psiH}
For~$0 \leq i \leq t$, set $W_i := \A_0 * \A_1 * \ldots * \A_i$; note~$X:=W_t$.
Define~$\psi_{C_i(H)} : \A_p \bigl( C_i(H) ) \to W_i$ on~$E \in \A_p \bigl( C_i(H) \bigr)$ by: 
\begin{align*}
\psi_{C_i(H)}(E) & :=           \pi_k(E) ,\quad k = \max \{\ j \leq i \tq E \nleq C_H(L_j)\ \} \\
                 & \phantom{:}= \pi_k(E) ,\quad k = \min \{\ j \leq i \tq E \leq C_j(H)\ \} .
\end{align*}
Notice since~$E \leq C_i(H)$ that~$E$ centralizes~$L_j$ for all~$j >i$; 
hence in the top line above, we could remove the restriction~``$j \leq i$''; 
that is, the indicated ~``$k$'' is indeed the index that we called~$i_E$ in Definition~\ref{defn:iEandpsi}:
namely  the largest index~$j$ for which~$E \nleq C_H(L_j)$.
In particular, we see that~$\psi_{C_i(H)}(E) = \pi_{i_E}(E)$ lies in the image-poset~$\A_{i_E}$ (and in no other~$\A_j$),
and hence (since~$i_E \leq i$) in~$W_i$. 

We had seen that~$\A_p(L_i) \subseteq \A_i$; this says for~$E \in \A_p(L_i)$ that~$i_E = i$. 
Thus for the restriction of~$\psi_{C_i(H)}$ to~$\A_p(L_i)$, the usual projection becomes just the natural ``inclusion''.
This is the analogue of the property ``$\psi |_{\A_p(E(G))} = \pi$'' that we saw before Remark~\ref{rk:ApHformhypThm2}.

\textbf{Notation.} We usually write~$\psi_H$, or just~$\psi$, to abbreviate~$\psi_{C_t(H)}$; and~$\psi_i$ for~$\psi_{C_i(H)}$.
\donerk
\end{definition}

\smallskip

The following is an easy consequence of the definition above and Remark~\ref{remarkDescriptionAiCi}:

\begin{lemma}
\label{lemmaContainmentsCiHWi}
The maps~$\psi_{C_i(H)}$ are order-preserving. 
Moreover, for all~$0 \leq i \leq k \leq t$, we have a commutative diagram:
\[ \xymatrix{
     \A_p \bigl( C_i(H) \bigr) \ar@{^(->}[d] \ar[rr]^{\psi_{C_i(H)}} & & W_i \ar@{^(->}[d] \\
     \A_p \bigl( C_k(H) \bigr) \ar[rr]^{\psi_{C_k(H)}}                             & & W_k
             }
\]
\end{lemma}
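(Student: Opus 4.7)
The lemma asks for two things: that each $\psi_{C_i(H)}$ respects the order on $\A_p(C_i(H))$, and that the projections are compatible with the natural inclusions $C_i(H) \subseteq C_k(H)$ and $W_i \subseteq W_k$ when $i \leq k$. My plan is to unwind both claims directly from the definition of $\psi_{C_i(H)}$ in Definition~\ref{defn:psiH}, using the description of $i_E$ there, together with the elementary properties of the join of posets and the fact that each $\pi_j$ is a group-theoretic quotient map (hence monotone on $\A_p$-posets).

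For order-preservation, I would fix $E \leq F$ in $\A_p(C_i(H))$ and compare their assigned indices $i_E$ and $i_F$. Since $E \leq F$, any $j$ witnessing $E \nleq C_H(L_j)$ also witnesses $F \nleq C_H(L_j)$, so $i_E \leq i_F$. I then split into two cases. If $i_E = i_F = j$, both values $\psi_{C_i(H)}(E) = \pi_j(E)$ and $\psi_{C_i(H)}(F) = \pi_j(F)$ lie in the same factor $\A_j$ of the join $W_i$; and the inclusion $E \leq F$ is preserved by the quotient-homomorphism $\pi_j$, so $\pi_j(E) \leq \pi_j(F)$ in $\A_j$. If $i_E < i_F$, the image $\psi_{C_i(H)}(E)$ lies in $\A_{i_E}$ while $\psi_{C_i(H)}(F)$ lies in $\A_{i_F}$, and by the very definition of the join $\A_0 * \A_1 * \dots * \A_i$, any element of a lower-indexed factor is strictly below any element of a higher-indexed factor. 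Either way the relation is preserved.

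For the square, I would take $E \in \A_p(C_i(H))$ and compute both compositions on $E$. Going down then across yields $\psi_{C_k(H)}(E) = \pi_{k_E}(E)$, where $k_E$ is the largest index $j \leq k$ with $E \nleq C_H(L_j)$. Going across then down yields the inclusion into $W_k$ of $\psi_{C_i(H)}(E) = \pi_{i_E}(E)$, with $i_E$ the largest such index $j \leq i$. The key observation is that $E \leq C_i(H) = C_H(L_{i+1}\ldots L_t)$ already centralizes every $L_j$ with $j > i$, so no index $j$ with $i < j \leq k$ can satisfy $E \nleq C_H(L_j)$; hence $k_E = i_E$, and the two resulting elements are literally the same element of $W_k$.

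Neither step is really an obstacle: the content is purely bookkeeping about where $\psi$ places an elementary abelian subgroup within the join. The only place to be careful is in handling the order-preservation claim across different factors of the join — in particular verifying that the definition of $i_E$ interacts well with the comparison $E \leq F$, so that images genuinely respect the strata. Once that is recorded, the commutative square is essentially a tautology from the filtration $C_H(N) = C_0(H) \trianglelefteq \dots \trianglelefteq C_t(H) = H$ noted in Remark~\ref{remarkDescriptionAiCi}.
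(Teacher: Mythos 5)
Your proof is correct, and since the paper supplies no argument here (it simply asserts the lemma is an easy consequence of Definition~\ref{defn:psiH} and Remark~\ref{remarkDescriptionAiCi}), your write-up is essentially the canonical unwinding of that assertion: monotonicity of $j \mapsto i_E$, the two-case split across equal versus distinct join-strata, and the observation that $E \leq C_i(H)$ kills every index $j>i$, so that $i_E = k_E$. The only point worth recording is the degenerate case where $E \leq C_H(N)$ (so the set $\{j : E \nleq C_H(L_j)\}$ is empty); there one reads the index as $0$ via the $\min$ form of the definition and gets $\pi_0 = \mathrm{id}$, and both the order-preservation and commutativity claims hold trivially.
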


\bigskip

At this point, much as in our discussion after Definition~\ref{defn:Kasjoin}, 
we see that~$\psi_H$ maps~$\A_p(H)$ to the formal join~$X$---of posets~$\A_i$,
related to the individual~$\A_p \bigl( \Aut_H(L_i) \bigr)$, for~$L_i$ now just in the orbit of~$L$;
and the ``trivial-action part''~$\A_0$ containing the kernel~$C_H(N)$ of the action of~$H$ on each member of the orbit of~$L$.
We have now completed our preparations for:

\bigskip

\noindent
{\em The main argument for Theorem~\ref{theoremNormalCaseReduction}\/}

\bigskip

Thus we can finally prove our main technical result Theorem~\ref{theoremNormalCaseReduction}.
In view of our preliminary discussion of its hypothesis,
the proof will parallel the part of the proof of Segev's Theorem~2 in Section~\ref{sectionDiscussionSegev}, 
{\em after\/} we obtained the~$\A_p(H)$-form of the hypothesis in Remark~\ref{rk:ApHformhypThm2}:

\begin{proof}[Proof of Theorem~\ref{theoremNormalCaseReduction}]
As usual in proving~(H-QC), we assume that~$O_p(G) = 1$.
We show that, under the hypotheses of this theorem, $\tilde{H}_* \bigl( \A_p(G) \bigr) \neq 0$;
recall we had earlier called this~``(Goal)''.

To that end,
we will follow the outline of Segev's argument in our implications-analysis in Remark~\ref{rk:summarylogimplic}.
In brief summary:
We will prove that conditions~(C), (D) and~(E) hold.
We will see that~(C) and~(E) follow from our definition of the appropriate new~$X$ (and of the analogous~$K_0$).
And our hypothesis, of nonzero-ness in homology of the natural poset map~$\psi_H : \A_p(H) \to X$,
was designed to give exactly condition~(D).

\bigskip

\noindent
We now begin the detailed proof:

We recall that as~``$H$'',
we are here using the {\em local\/} kernel---namely the kernel of~$G$ on the orbit of~$L$.
Also we have a new~``$X$'', defined as the join of the~$\A_i$ (see Definition~\ref{defn:CalAi}).
With these changes, our basic setup will be as in~\cite{Segev}.

Just as in our earlier analysis of~\cite{Segev},
we need to first reduce to the subcase~$\A_p(H) \subsetneq \A_p(G)$. 
This closely follows our earlier discussion of the proof of Segev's Theorem~2:
Note that in any case for~$\A_p(H)$, our hypothesis of~$(\psi_H)_* \neq 0$ 
forces~$\tilde{H}_* \bigl( \A_p(H) \bigr) \neq 0$ for its domain-space.
So when~$\A_p(H) = \A_p(G)$, we in fact have the nonzero homology of~$\A_p(G)$ required for~(Goal).
Thus for this case, the hypothesis already {\em includes\/} the conclusion~(H-QC) of the result---as required.

With this reduction to~$\A_p(H) \subsetneq \A_p(G)$ (so that~$H < G$) in hand,
we again obtain consequences as in earlier Remark~\ref{rk:conseqH<G}---though now we use~(iii)$_L$ above 
to get~$\A_p(H) \neq \emptyset$, in place of the use there of~\ref{rk:conseqOp'G=1}(iii). 
So in analogy with~\ref{rk:conseqH<G}(ii) we get:

\centerline{
     We have~$t \geq p \geq 2$ components~$L_i$ in our~$G$-orbit for~$L$.
            }

\noindent
Furthermore from~\ref{rk:conseqH<G}(i) we get nontriviality of the fundamental decomposition for~$\A_p(G)$: 
Let's in fact review that basic setup of~\cite{Segev} from the discussion around Remark~\ref{rk:conseqH<G}.
Set~$Y := \N_G(H)$, with~$Z := \A_p(G) - \A_p(H)$, and~$Y_0 := Y \cap Z = \N_G(H) - \A_p(H)$.
We recall also the earlier retraction~$r : Y \to \A_p(H)$, and set~$V_0 := r(Y_0)$. 
Then~\ref{rk:conseqH<G}(i) gives us: 

\centerline{
  The decomposition~$\A_p(G) = Y \cup Z$ is nontrivial; with $Y,Z,Y_0,V_0 \neq \emptyset$. 
            }

\noindent
Then from Remark~\ref{rk:MVseqfordecompYZ} we have the corresponding Mayer-Vietoris exact sequence:
 \[ \ldots \to \tilde{H}_{k+1} \bigl( \A_p(G) \bigr)
           \to \tilde{H}_k(Y_0)
	   \to \tilde{H}_k(Y) \oplus \tilde{H}_k(Z)
	   \to \tilde{H}_k \bigl( \A_p(G) \bigr)
	   \to \ldots .
\]

\noindent
As before, we will get the desired conclusion~$\tilde{H}_*(\A_p(G)) \neq 0$,
if the map~$a_*$ induced in homology by the inclusion~$a : Y_0 \subseteq Y$ is not surjective. 
(Cf.~condition~(A) in Remark~\ref{rk:initimpl}.)

We saw by Remark~\ref{remarkPropertiesOfCalN}
that the retraction~$r : Y \to \A_p(H)$, given by~$r(E) = E \cap H$, is a homotopy equivalence. 
The analogue of the earlier commutative diagram in Remark~\ref{rk:diagramSegev}
is given here by diagram~(\ref{diagramProofMainThm}) below;
and we now see similarly that we get our conclusion~$\tilde{H}_* \bigl(\A_p(G) \bigr) \neq 0$,
if in fact the~$r$-translated map~$b_*$ induced by the inclusion~$b : V_0 \subseteq \A_p(H)$ is not surjective.
(Cf.~condition~(B) in Remark~\ref{rk:initimpl}.)

We now make appropriate adjustments to the earlier constructions of Segev.
Recall we have already defined some new variants,
of the older constructions given at Definitions~\ref{defn:Kasjoin} and~\ref{defn:iEandpsi}:

\centerline{
  $X = \A_0 * \A_1 * \ldots * \A_t$, with poset map~$\psi_H : \A_p(H) \to X$. 
            }

\centerline{
In the case where~$\A_0 = \emptyset$, we simply take~$X = \A_1 * \ldots * \A_t$.
            }
\noindent
In particular we saw, in defining~$\psi_H$ as~$\psi_{C_t(H)}$ at Definition~\ref{defn:psiH},
that for any~$E \in \A_p(H)$,
taking~$i_E$ to be the largest index~$j$ with~$E \nleq C_H(L_j)$, we have~$\psi_H(E) = \pi_{i_E}(E) \in \A_{i_E}$.
Furthermore for our new choice of~$X$ above, we define the subcomplex~$K_0$ 
in parallel with earlier Definition~\ref{defn:K0hatK0}:
Namely we write~$\A_{\hat{i}}$ for the maximal sub-join given by the~$\A_j$ for~$j \neq i$; and then take:

\centerline{
  $K_0 := \bigcup_{i=0}^t\ \K( \A_{\hat{i}})$ (but exclude $i=0$ when $\A_0 = \emptyset$).
            }

\noindent
This definition means that every chain~$\sigma$ from~$K_0$ has no contribution from some~$\A_{i_{\sigma}}$,
i.e.~for at least one index~$i_{\sigma}$.
Recall that our initial reduction to the nontrivial-action case~$H < G$
guarantees~$t \geq p \geq 2$ for the number of components in our orbit;
recall we saw at Definition~\ref{defn:K0hatK0} that this guarantees that~$K_0 \neq \emptyset$.

These adjusted-constructions provide the remaining terms needed for our desired overall commutative diagram,
in analogy with that in earlier Remark~\ref{rk:diagramSegev}:

\begin{equation}\label{diagramProofMainThm}
\xymatrix{
  \tilde{H}_n(Y_0) \ar[d]_{a_*} \ar[r]^{({r|_{Y_0})}_*} & \tilde{H}_n(V_0) \ar[d]_{b_*} \ar[r]^{{(\psi_H|_{V_0})}_*}
                                                        & \tilde{H}_n(K_0) \ar[d]^{=0}_{c_*} \\
  \tilde{H}_n(Y) \ar[r]^{r_*}_{\cong}                  & \tilde{H}_n(\A_p(H)) \ar[r]^{{(\psi_H)}_*}_{ \neq 0}
                                                        & \tilde{H}_n(X)
          }
\end{equation}

\noindent
That is: to verify the properties in the diagram, and hence complete our proof, it suffices
(in view of the discussion of implications in Remark~\ref{rk:summarylogimplic})
to establish conditions~(C), (D) and~(E).

\bigskip

We show first that~$\psi_H$ maps~$\K(V_0)$ into~$K_0$:

\noindent
\textbf{Claim:} (C) holds; that is, $\psi_H \bigl( \K(V_0) \bigr) \subseteq K_0$.

\noindent
Here we have a partial analogue of the proof of Segev's inclusion~$j : V_0 \subseteq \D_p(H)$
(for his Theorem~1, as we described in Section~\ref{sectionDiscussionSegev}):
We saw that our earlier reduction to~$H < G$ gives~$V_0 \neq \emptyset$. 
So we consider any~$\sigma \in \K(V_0)$; and we need to get~$\psi_H(\sigma) \in K_0$. 
We can take~$\sigma = (A_0 < \ldots < A_l)$,
where there is~$E \in \A_p(G)$ with~$E \nleq H$ and~$A_l = E \cap H$.
So each~$A_k \leq A_l \leq E$.
Recall that~$H$ is the local-kernel: 
so the elements~$e \in  E - H$ induce nontrivial permutations on the~$G$-orbit~$\{ L_1,\ldots, L_t \}$.
Let~$i_1 < i_2 < \dots < i_p$ (with~$p \geq 2$) be the indices for some such nontrivial~$e$-orbit.
We will show that each~$\psi_H(A_k)$ lies in~$\A_{\widehat{i_1}}$:
Recall we have~$\psi_H(A_k) = \pi_{i_{A_k}}(A_k)$---where~$i_{A_k}$ is the largest index~$j$ with~$A_k \nleq C_H(L_j)$.
We claim that~$i_{A_k} \neq i_1$:
If~$A_k \leq C_H(L_{i_1})$, then~$i_{A_k} \neq i_1$ because we just saw that~$A_k \nleq C_H(L_{i_{A_k}})$. 
Otherwise~$A_k \nleq C_H(L_{i_1})$;
but then as~$A_k$ centralizes~$e$,  $A_k$ is also nontrivial on the remaining members of the~$e$-orbit,
including~$L_{i_p}$ with~$i_p > i_1$; 
so~$i_1$ is not the {\em largest\/} index~$j$ with~$A_k \nleq C_H(L_j)$, as required above.
Thus we have shown that~$i_{A_k} \neq i_1$, and hence~$\psi_H(A_k) \not \in \A_{i_1}$---in fact for all~$k$;
so that~$\psi_H(\sigma) \in \K ( \A_{\widehat{i_1}} ) \subseteq K_0$, as needed.

\bigskip

Condition~(D), i.e.~$(\psi_H)_* \neq 0$, holds since it is already part of the hypothesis;
indeed this was the main motivation for the design of the hypothesis.

\vspace{0.2cm}

Hence it remains to establish condition~(E).

\noindent
\textbf{Claim:} (E) holds, that is, $c : K_0 \subseteq \K(X)$ induces the zero map in homology.

\noindent
We had already included essentially this observation in Definition~\ref{defn:K0hatK0}; 
for convenience we review the argument here:
Recall we followed~\cite[p958]{Segev} in describing the construction of a standard contractible simplicial complex~$\hat{K}_0$,
such that~$K_0 \subseteq \hat{K}_0 \subseteq \K(X)$;
namely we gave the definition:

\centerline{
   $\hat{K}_0 := \bigcup_i\ \St_{\K(X)}(v_i)$,
            }

\noindent
where the~$v_i \in \A_i$ are fixed vertices, and~$\St_{\K(X)}(v_i)$ is the star of~$v_i$ in~$\K(X)$.
This complex is contractible (cf.~\cite[(5.1)]{AS92}).
Notice that at this point,
we are using that~$\A_i \neq \emptyset$ for all~$i$, so that these vertices~$v_i$ exist.
So to see that~$\A_i \neq \emptyset$:
For~$i \geq 1$, 
we recall that~$p$ divides the order of each~$L_i$ by~(ii)$_L$, while~$Z(L_i)$ is a~$p'$-group by~(i)$_L$;
so we can use Lemma~\ref{lemmaPOuterPreimage} to see that~$\A_i \supseteq \A_p(L_i) \neq \emptyset$.
For~$i=0$, if~$\A_0 = \emptyset$, then we had excluded it from the definition of~$X$ and hence of~$K_0$.
(If we had~$\A_i = \emptyset$ for some~$i$, then~$K_0 = \K(X)$;
and the claim would not hold unless~$\tilde{H}_*(X) = 0$---in which case we would not be using this~$X$.)

\bigskip

We now complete the proof of Theorem~\ref{theoremNormalCaseReduction}:
From~(\ref{diagramProofMainThm}), $0 = c_* \circ (\psi_H |_{V_0})_* = (\psi_H)_* \circ b_*$;
so as~$(\psi_H)_* \neq 0$, $b_*$ cannot be surjective. 
From our earlier analogue of~(B), we get~$\tilde{H}_* \bigl( \A_p(G) \bigr) \neq 0$---that is, (Goal). 

For the ``it suffices'' part of the statement of the theorem, 
note that~$\psi_H |_{\B} = \psi_H \circ f$, for the inclusion~$f : \B \subseteq \A_p(H)$;
so that $(\psi_H |_{\B})* \neq 0$ forces $(\psi_H)_* \neq 0$. 
\end{proof}

In the remainder of the section, we present:

\bigskip

\noindent
{\em Further technical results via the generalized Segev method\/}

\bigskip

We now give some definitions and technical properties that can be useful
for establishing, in applications, the hypotheses of Theorem~\ref{theoremNormalCaseReduction}.
These are inspired by the filtration of~$\A_p(H)$
determined by the~subposets~$\A_p \bigl( C_i(H) \bigr)$ of sub{\em groups\/} of~$H$ (from Definition~\ref{defn:CalAi}):
which we use now for approaching the posets~$\A_i$ (of~$\A_p$-posets of {\em quotient\/} groups of the~$C_i(H)$)
appearing as the factors in the join~$X$---where of course~$X$ is the codomain-space for our crucial mapping~$\psi_H$.
Indeed we could roughly  view the process of such successive subgroup-by-quotient replacements
as gradually transforming the filtration of~$\A_p(H)$ via the~$\A_p \bigl( C_i(H) \bigr)$,
into our filtration of~$X$ determined by the sub-joins~$W_i$ (from the~$\A_j$ for~$j \leq i$).

So below we consider maps taking the~$i$-th subgroup-term~$\A_p \bigl( C_i(H) \bigr)$
to a version of itself---in which we roughly ``shrink the top part'' to a join-factor,
given by the poset of quotients of its members appearing in~$\A_i$. 
We can then view our earlier map~$\psi_i$ as being defined downward (in the ordering on components)
from the~$i$-th stage of these shrinkings;
and then our final map~$\psi_H$ arises as we proceed upward through the successive ``approximations''~$\psi_i$. 
Thus our filtration-transforming process inductively builds toward our goal of homology propagation from~$\A_p(H)$ to~$\A_p(G)$:
and along the way we will be propagating good behavior in homology (in the sense of Remark~\ref{rk:goodbehaviorpsi})
for the intermediate maps~$\psi_i$ leading to our final~$\psi_t = \psi_H$.

Hence for~$1 \leq i \leq j \leq t$, we define the following maps:
\begin{align}
 \varphi_i    & : \A_p \bigl( C_i(H) ) \to \A_p \bigl( C_{i-1}(H) \bigr) * \A_i \\
 \varphi_i(E) & := \begin{cases}
                    E \in \A_p \bigl( C_{i-1}(H) \bigr) & \text{if } E\leq C_{i-1}(H), \\
                    \pi_i(E) \in \A_i                   & \text{if } E\nleq C_{i-1}(H).
\end{cases}
\end{align}

\begin{align}
\Phi_{i,j}    & : \A_p \bigl( C_i(H) \bigr) * \A_{i+1} * \ldots * \A_j
                  \to \A_p \bigl( C_{i-1}(H) \bigr) * \A_{i} * \A_{i+1} * \ldots * \A_j\\
\Phi_{i,j}(E) & := \begin{cases}
                    \varphi_i(E) \in \A_p \bigl( C_{i-1}(H) \bigr) * \A_{i} & \text{if } E \in \A_p \bigl( C_{i}(H) \bigr) , \\
                    E \in \A_k                                              & \text{if } E \in \A_k,\ i+1 \leq k \leq j.
\end{cases}
\end{align}

\noindent
The following lemma summarizes the relations between the maps $\Phi$, $\varphi$ and $\psi$:

\bigskip

\begin{lemma}
[{Relations $\Phi-\psi$}]
\label{lemmaRelationPhiPsi}
The following hold:
\begin{enumerate}
 \item $\Phi_{i,j} = \varphi_i * \Id_{\A_{i+1}} * \ldots * \Id_{\A_j}$.
 \item $\Phi_{i,i} = \varphi_i$.
 \item $\psi_i = \Phi_{1,i} \circ \ldots \circ \Phi_{i-1,i} \circ \Phi_{i,i}$.
 \item If~$\varphi_i$ induces an epimorphism in all the homology groups of degree~$\leq n$, \\
         then so does~$\Phi_{i,j}$ in all the homology groups of degree~$\leq n + (j-i)$, for all~$j \geq i$.
 \item If~$\varphi_i$ induces a monomorphism in all the homology groups of degree~$\leq n$, \\
         then so does~$\Phi_{i,j}$ in all the homology groups of degree~$\leq n + (j-i)$, for all $j \geq i$.
\end{enumerate}
\end{lemma}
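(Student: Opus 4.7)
The plan is to dispatch parts (1) and (2) by direct inspection, establish (3) by tracing an element through the composition, and prove (4) and (5) via the K\"unneth formula for joins together with the nonemptiness of the factors~$\A_k$.

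For part~(1), I would simply unwind the definition of~$\Phi_{i,j}$: on $E\in\A_p(C_i(H))$ it returns~$\varphi_i(E)$, while on $E\in\A_k$ with $i+1\leq k\leq j$ it returns~$E$ itself. This is precisely how the join map $\varphi_i * \Id_{\A_{i+1}} * \ldots * \Id_{\A_j}$ acts piecewise on each join-factor. Part~(2) is then the special case~$j=i$, where no identity factors appear and $\Phi_{i,i}=\varphi_i$ is immediate. (That all the maps involved are order-preserving is recorded in Lemma~\ref{lemmaContainmentsCiHWi}.)

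For part~(3), I would fix an arbitrary $E\in\A_p(C_i(H))$ and apply $\Phi_{i,i}, \Phi_{i-1,i}, \ldots, \Phi_{1,i}$ in turn. Let $k:=i_E$ be the largest index $\leq i$ with $E\nleq C_H(L_k)$, or equivalently the smallest index with $E\leq C_k(H)$. For each $\ell>k$ we have $E\leq C_{\ell-1}(H)$, so $\varphi_\ell$ (invoked inside $\Phi_{\ell,i}$) sends~$E$ to~$E\in\A_p(C_{\ell-1}(H))$; at the stage $\ell=k$, we have $E\nleq C_{k-1}(H)$, so $\varphi_k(E)=\pi_k(E)\in\A_k$; for each subsequent $\ell<k$ the map $\Phi_{\ell,i}$ acts as the identity on $\A_k$ (by part~(1)), preserving this value. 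The terminal value is $\pi_k(E)=\psi_i(E)$, which is exactly the definition in~\ref{defn:psiH}. This argument can be packaged as an easy induction on~$i$ using (2).

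For parts~(4) and~(5), I would first observe that each $\A_k$ ($1\leq k\leq t$) is nonempty: by Lemma~\ref{lemmaPOuterPreimage} together with the fact that~$p$ divides~$|L_k|$ (hypothesis~(ii)$_L$), we have $\A_p(L_k)\subseteq\A_k$. Hence $\tilde H_{-1}(\A_k,\QQ)=0$. By part~(1), $\Phi_{i,j}$ is a join of $\varphi_i$ with $j-i$ identity maps on nonempty posets. The K\"unneth formula~(\ref{equationJoinHomology}), applied over~$\QQ$, gives
\[
\tilde H_m(X * \A_k) \;=\; \bigoplus_{\substack{a+b=m-1 \\ b\geq 0}} \tilde H_a(X)\otimes \tilde H_b(\A_k),
\]
the restriction $b\geq 0$ coming from the nonemptiness of~$\A_k$. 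Therefore if $(\varphi_i)_*$ is surjective (resp.~injective) on $\tilde H_a$ for all $a\leq n$, then $(\varphi_i * \Id_{\A_k})_*$ is surjective (resp.~injective) on $\tilde H_m$ for all $m\leq n+1$, since in each summand $a=m-1-b\leq m-1\leq n$. The injectivity case uses that tensoring with the $\QQ$-vector space $\tilde H_b(\A_k)$ is exact. Iterating this single-degree shift over the $j-i$ identity factors yields the claimed bound $n+(j-i)$.

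The main obstacle is the bookkeeping of the dimension-shift in~(4) and~(5): verifying that each join with a nonempty factor contributes exactly one to the degree, no more and no less. This hinges on properly handling reduced homology in degree~$-1$ (killed by nonemptiness of each $\A_k$) and on the exactness of tensoring over the field~$\QQ$; everything else is formal.
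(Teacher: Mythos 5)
Your proposal is correct and follows essentially the same approach as the paper, whose own proof is extremely terse (it declares parts (1)--(3) ``straightforward'' and for (4)--(5) simply invokes exactness of~$\otimes_\QQ$ together with the K\"unneth decomposition of the join). Your fleshed-out version is sound: the element-tracing for~(3) is correct including the edge case~$i_E=0$, and for~(4)--(5) you rightly single out the nonemptiness of each~$\A_k$ (hence $\tilde H_{-1}(\A_k)=0$) as the reason each join factor shifts the degree by exactly one --- a point the paper leaves implicit but which is genuinely needed for the bound~$n+(j-i)$ to hold.
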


\begin{proof}
Parts~(1), (2) and~(3) are straightforward.
For~(4) and~(5), note that the tensor product is exact over the field~$\QQ$,
and that~$\Phi_{i,j} = \varphi_i * \Id_{\A_{i+1}} * \ldots * \Id_{\A_j}$ induces in homology
the tensor product map~$(\varphi_i)_* \otimes (\Id_{\A_{i+1}})_* \otimes \ldots \otimes (\Id_{\A_j})_*$
(see equation~\ref{equationJoinHomology}).
%We are also using the fact that $\A_i\neq\emptyset$ if $i\geq 1$ (see Remark \ref{descriptionAi}).
\end{proof}

We consider now the two properties below,
which will encode sufficient conditions to propagate homology from the posets~$\A_p(C_i(H))$ to~$\A_p(H)$.
Then in Proposition~\ref{propositionPropertiesEM},
we will indicate one way of guaranteeing the conditions of Theorem~\ref{theoremNormalCaseReduction} from these properties.
Later we will indicate still other ways of using these properties.

We begin by describing the common setup for both properties:  
\begin{definition}[The map~$\varphi$ for a normal~$L$]
\label{defn:phiforL}
Assume~$L \normal G$ is a normal component of~$G$;
and denote by $\pi : \A_p(G) \to \A_p \bigl( \Aut_G(L)\bigr) \cup \{ 1 \}$ the map induced by the quotient.
Set~$\A := \Imageposet{G}{L}$; and define $\varphi :\A_p(G) \to \A_p \bigl( C_G(L) \bigr) * \A$ by:
 \[ \varphi(E) = \begin{cases}
                   E \in \A_p \bigl( C_G(L) \bigr) & \text{if } E \leq C_G(L), \\
                   \pi(E) \in \A                   & \text{if } E \nleq C_G(L).
                 \end{cases}
 \]
 We will apply this to successive components~$L_i$ via our filtration.
\donerk
\end{definition}

In this context, we define first the epimorphism-property:

\smallskip

\noindent
\textbf{Property E(n).} 
We have an~$n \geq 0$, for which in the setup of Definition~\ref{defn:phiforL}, we get: 
\begin{center}
  $ \varphi_* : \tilde{H}_m \bigl( \A_p(G) \bigr) \to \tilde{H}_m (\ \A_p \bigl( C_G(L) \bigr) * \A\ )$
   is an epimorphism for all~$m \leq n$.
\end{center}

\smallskip

\noindent
We then define analogously the monomorphism-property:

\smallskip

\noindent
\textbf{Property M(n).}
We have an~$n \geq 0$, for which in the setup of Definition~\ref{defn:phiforL}, we get:
\begin{center}
  $\varphi_* : \tilde{H}_m \bigl( \A_p(G) \bigr) \to \tilde{H}_m (\ \A_p \bigl( C_G(L) \bigr) * \A\ )$
   is a monomorphism for all~$m \leq n$.
\end{center}

\vspace{0.3cm}

\noindent
The hypotheses of Proposition~\ref{propositionPropertiesEM} below
give one possible approach to conditions which are sufficient,
in terms of the above Properties~E($n$) and~M($n$),
to establish the hypothesis of Theorem~\ref{theoremNormalCaseReduction}.
For~\ref{propositionPropertiesEM}, we will not assume the inductive hypotheses~(H1) or~(H$L(p)$) mentioned earlier
(though we {\em will\/} later assume them in some other results which involve the properties).
Instead we will assume a special inductive assumption on the homology of certain relevant~$p$-subgroup posets.
Note also that~$n + t -i \geq 0$ below: that is, $i \leq n +  t$---since~$i \leq t$ and~$n \geq 0$.

\begin{proposition}
\label{propositionPropertiesEM}
Let~$L$ be a component of~$G$ of order divisible by~$p$,
and let~$\{ L_1 ,\ldots , L_t \}$ denote its~$G$-orbit under conjugation.
Let~$H := \bigcap_i\ N_G(L_i)$ and~$N := L_1 \cdots L_t$.
Suppose that there exists~$n \geq 0$ such that one of the following holds:
\begin{enumerate}
 \item $\tilde{H}_n \bigl( \A_p(H) \bigr) \neq 0$,\
         and for each~$1 \leq i \leq t$, $\bigl( L_i, C_i(H) \bigr)$ has Property~M($n-t+i$).
 \item $\tilde{H}_n(\ \A_p \bigl( C_H(N) \bigr) * \A_1 * \ldots *\A_t\ ) \neq 0$, \\
         and for each~$1 \leq i \leq t$, $\bigl( L_i, C_i(H) \bigr)$ has Property~E($n-t+i$).
\end{enumerate}
Then the hypotheses of Theorem~\ref{theoremNormalCaseReduction} hold, so that~$G$ satisfies~(H-QC).
\end{proposition}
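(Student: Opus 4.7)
The plan is to verify the single hypothesis of Theorem~\ref{theoremNormalCaseReduction}---namely that $(\psi_H)_* \neq 0$ in some homology degree---and then invoke that theorem directly. The main tool is the factorization
\[ \psi_H \;=\; \Phi_{1,t} \circ \Phi_{2,t} \circ \ldots \circ \Phi_{t-1,t} \circ \Phi_{t,t} \]
from Lemma~\ref{lemmaRelationPhiPsi}(3) (with $i = t$), combined with the degree-shift statements in parts~(4) and~(5) of that same lemma. Before deploying this, I would first check that the pairs $(L_i, C_i(H))$ genuinely fit the framework of Definition~\ref{defn:phiforL}: since $H$ normalizes each $L_j$ and $L_i$ commutes with $L_{i+1} \ldots L_t$, the subgroup $C_i(H) = C_H(L_{i+1} \ldots L_t)$ normalizes $L_i$, so $L_i$ sits as a normal component of $C_i(H)$. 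Moreover $C_{C_i(H)}(L_i) = C_{i-1}(H)$ and $\Imageposet{C_i(H)}{L_i} = \A_i$ by Definition~\ref{defn:CalAi} and Remark~\ref{remarkDescriptionAiCi}, so the map $\varphi$ of Definition~\ref{defn:phiforL} attached to this pair is exactly the map $\varphi_i$ from the paper.

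In case~(1), Property M($n-t+i$) for $(L_i, C_i(H))$ gives that $(\varphi_i)_*$ is a monomorphism in degrees $\leq n - t + i$. Applying Lemma~\ref{lemmaRelationPhiPsi}(5) with $j = t$, each $\Phi_{i,t}$ is then a monomorphism in degrees $\leq (n - t + i) + (t - i) = n$. A composition of monomorphisms is a monomorphism, so $(\psi_H)_*$ is injective in degree $n$; since $\tilde{H}_n(\A_p(H)) \neq 0$ by hypothesis, injectivity forces $(\psi_H)_* \neq 0$ in degree $n$.

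In case~(2), Property E($n-t+i$) combined with Lemma~\ref{lemmaRelationPhiPsi}(4) analogously makes each $\Phi_{i,t}$ an epimorphism in degree $n$, whence $(\psi_H)_*$ is an epimorphism in degree $n$ onto $\tilde{H}_n(X)$ where $X = \A_p(C_H(N)) * \A_1 * \ldots * \A_t$; since $\tilde{H}_n(X) \neq 0$ by hypothesis, once again $(\psi_H)_* \neq 0$. Either way, the hypothesis of Theorem~\ref{theoremNormalCaseReduction} is satisfied, yielding (H-QC) for $G$.

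The argument is essentially bookkeeping: the whole force comes from choosing the shift $n - t + i$ precisely so that every factor $\Phi_{i,t}$ controls homology up to the common target degree $n$, thereby allowing the mono- or epi-property to survive the composition. The one subtlety I would flag is the preliminary check that $L_i$ really is a normal component of $C_i(H)$ (rather than just of $H$), which is what permits Properties~E and~M to be formulated for that pair at all; all of the substantive work has already been done in Theorem~\ref{theoremNormalCaseReduction} and in Lemma~\ref{lemmaRelationPhiPsi}, and the present proposition simply packages those ingredients into a convenient sufficient condition.
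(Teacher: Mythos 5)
Your proof is correct and follows essentially the same route as the paper's own argument: factor $\psi_H = \Phi_{1,t}\circ\cdots\circ\Phi_{t,t}$ via Lemma~\ref{lemmaRelationPhiPsi}(3), identify $\varphi_i$ with the map of Definition~\ref{defn:phiforL} for the pair $(L_i,C_i(H))$ using $C_{C_i(H)}(L_i)=C_{i-1}(H)$, apply parts~(4)/(5) of that lemma to propagate the mono/epi property up to degree $n$, and conclude $(\psi_H)_*\neq 0$ from the nonvanishing of the domain (resp.\ codomain) homology. Your explicit verification that $L_i$ is a normal component of $C_i(H)$ is a worthwhile spelled-out detail that the paper leaves implicit.
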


\begin{proof}
We will show that~(1) (resp.~(2)) fulfills the hypotheses of Theorem~\ref{theoremNormalCaseReduction}.
By Lemma~\ref{lemmaRelationPhiPsi}(3) with~$t$ in the role of~``$i$'' there,
we have~$\psi_H = \psi_t = \Phi_{1,t} \circ \ldots \circ \Phi_{t,t}$.
We now fix some index~$i \geq 1$; and consider the term~$\Phi_{i,t}$ in the above composition.
Note first that~$C_{C_{i}(H)}(L_i) = C_{i-1}(H)$:
so we can use~$L_i$, $C_i(H)$ in the roles of~``$L$, $G$'' in the setup of Definition~\ref{defn:phiforL},
to see that the role of~``$\varphi$'' there is played here by the map~$\varphi_i$ defined just before Lemma~\ref{lemmaRelationPhiPsi}. 
Now by hypothesis we have Property~M($n+t-i$) (resp.~Property~E($n+t-i$)) for the pair~$\big( L_i, C_i(H) \bigr)$.
This means that~$\varphi_i : \A_p \bigl( C_i(H) \bigr) \to \A_p \bigl( C_{i-1}(H) \bigr) * \A_i$
induces a monomorphism (resp. an epimorphism) in all homology groups of degree~$\leq n-t+i$.
So by (5) (resp.~(4)) of Lemma~\ref{lemmaRelationPhiPsi}, with~$t, n+t-i$ in the roles of~``$j, n$'', 
we see that~$\Phi_{i,t}$ induces monomorphisms (resp. epimorphisms)
in all the homology groups of degree~$\leq (n-t+i) + (t-i)=n$.
So the composition~$\psi_H$ of all the~$\Phi_{i,t}$ induces a monomorphism (resp. an epimorphism) in the~$n$-th homology.
Since the~$n$-th homology group of the domain (resp.~codomain) of~$\psi_H$ is nonzero by hypothesis,
$\psi_H$ does not induce the zero map in the~$n$-th homology group of~$\A_p(H)$---so
we can apply Theorem~\ref{theoremNormalCaseReduction}, and complete the proof.
\end{proof}

To finish this section, we present a local version of Theorem~1 of~\cite{Segev}.
For that purpose, we define a local-diagonal poset $\D_p(H)$,
analogous to that of~\cite{Segev}---which we described after Remark~\ref{rk:initimpl}. 
So let~$L_1,\ldots,L_t$ be an orbit of components of $G$,
and let~$H := \bigcap_i\ N_G(L_i)$ be the local kernel on these components.
Let~$\D_p(H)$ be the subposet of elements~$A \in \A_p(H)$ such that
there exists~$J \subseteq \{ 1,\ldots,t \}$ with $|J| \geq 2$ and~$C_A(\{L_j:j\in J\}) = C_A(L_i)$ for all~$i \in J$.

We now state our version of Theorem~1 of Segev.
In contrast with the original theorem of Segev,
we do not require~$O_{p'}(G) =1$, nor any extra inductive hypothesis on the components:

\begin{theorem}
\label{theorem1}
Let~$L$ be a component of~$G$ of order divisible by~$p$, 
and~$L_1,\ldots,L_t$ its~$G$-orbit.
Let~$H$ be the local kernel~$\bigcap_i\ N_G(L_i)$.
If~$\D_p(H) \to \A_p(H)$ is not surjective in homology, then~$G$ satisfies~(H-QC).
\end{theorem}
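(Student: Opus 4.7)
The plan is to follow the logical skeleton laid out in Section~\ref{sectionDiscussionSegev} for Segev's Theorem~1, adapted to the local kernel $H = \bigcap_i N_G(L_i)$ and to the local diagonal subposet $\D_p(H)$. As usual in proving~(H-QC), assume $O_p(G) = 1$; the goal is $\tilde{H}_*(\A_p(G)) \neq 0$. I first dispose of the trivial-action case $\A_p(H) = \A_p(G)$: the hypothesis that the inclusion $i : \D_p(H) \subseteq \A_p(H)$ is non-surjective in homology forces $\tilde{H}_*(\A_p(H)) \neq 0$, which is already the goal. (Note the hypothesis $p \mid |L|$ ensures $\A_p(L) \neq \emptyset$, hence $\A_p(H) \neq \emptyset$, so the reduced-homology argument is unambiguous even when $\D_p(H)$ is empty.) So from now on I assume $\A_p(H) \subsetneq \A_p(G)$, and hence $H < G$.

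Next I set up the generic Mayer--Vietoris context exactly as in Remark~\ref{rk:MVseqfordecompYZ}: write $\A_p(G) = Y \cup Z$ with $Y := \N_G(H)$, $Z := \A_p(G) - \A_p(H)$, $Y_0 := Y \cap Z$; let $r : Y \to \A_p(H)$ be the retraction $r(E) = E \cap H$ from Remark~\ref{remarkPropertiesOfCalN}, and set $V_0 := r(Y_0)$. Using $\A_p(H) \neq \emptyset$ together with the reduction $H<G$, the decomposition is nontrivial and $Y, Z, Y_0, V_0$ are all nonempty (cf.\ Remark~\ref{rk:conseqH<G}(i)). It suffices by the Mayer--Vietoris exact sequence (and the commutative diagram in Remark~\ref{rk:diagramSegev}) to show that the inclusion $b : V_0 \subseteq \A_p(H)$ induces a non-surjective map $b_*$ in homology; that is, condition~(B) of Remark~\ref{rk:initimpl}.

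The main technical step, and the only real obstacle, is to verify the inclusion $V_0 \subseteq \D_p(H)$ for the \emph{local} diagonal subposet defined above. Take $B \in V_0$, so $B = A \cap H$ for some $A \in Y_0$, and pick $a \in A - H$. Since $H$ is the local kernel on $\{L_1,\dots,L_t\}$, the element $a$ has some nontrivial orbit $\O = \{L_{i_1},\dots,L_{i_p}\}$ of size $p \geq 2$ on the components $L_j$. Because $B \leq A$ is abelian, $B$ centralizes $a$; a short commutator computation (using $[b, L_{i_k}^a] = [b^{a^{-1}}, L_{i_k}]^a = [b, L_{i_k}]^a$ for $b \in B$, and $b^{a^{-1}} = b$) shows that for any $b \in B$, $b$ centralizes $L_{i_k}$ if and only if it centralizes $L_{i_{k+1}}$. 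Hence $C_B(L_{i_1}) = \cdots = C_B(L_{i_p}) = C_B(\{L_j : j \in \O\})$, so with $J := \{i_1,\dots,i_p\}$ (of size $p \geq 2$) we verify the defining property of $\D_p(H)$, giving $B \in \D_p(H)$.

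Finally I complete the argument by the analogue of diagram~(\ref{diagramProofMainThm}): since $b = i \circ j$ for the established inclusion $j : V_0 \subseteq \D_p(H)$, we have $b_* = i_* \circ j_*$; the hypothesis that $i_*$ is not surjective then forces $b_*$ to be non-surjective, which is~(B). Via the implications in Remark~\ref{rk:summarylogimplic} this delivers $\tilde{H}_*(\A_p(G)) \neq 0$. No inductive hypothesis such as~(H1) or~(H$L(p)$) is needed, and the requirement $O_{p'}(G) = 1$ in Segev's original Theorem~1 is replaced solely by the divisibility hypothesis $p \mid |L|$, which is exactly what is needed to ensure $\A_p(H) \neq \emptyset$ at the reduction step.
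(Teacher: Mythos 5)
Your proposal is correct and follows essentially the same route as the paper's own proof: the same reduction to the nontrivial-action case $\A_p(H) \subsetneq \A_p(G)$, the same Mayer--Vietoris setup from Remark~\ref{rk:MVseqfordecompYZ}, the same key inclusion $V_0 \subseteq \D_p(H)$, and the same diagram-chase to finish. The only (welcome) difference is that you spell out the commutator identity $[b, L_{i_k}^a] = [b^{a^{-1}}, L_{i_k}]^a = [b, L_{i_k}]^a$ where the paper merely asserts that centralizing elements of $A \leq C_G(a)$ ``exhibit isomorphic actions'' on all members of the $a$-orbit $\O$; this is a clean way to make that assertion precise.
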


\begin{proof}
As usual for~(H-QC) we assume that~$O_p(G)=1$; and we must show that~$\tilde{H}_* \bigl( \A_p(G) \bigr) \neq 0$. 
We first follow the initial arguments in the proof of Segev's Theorem~1 in our discussion in Section~\ref{sectionDiscussionSegev}.
Indeed those arguments also provide the basic setup for the proof of our Theorem~\ref{theoremNormalCaseReduction} in this section.

\smallskip

\noindent
So we begin by essentially quoting a number of features from our proof of~Theorem~\ref{theoremNormalCaseReduction}:

Since we no longer have~$O_{p'}(G) = 1$ and its consequences in Remark~\ref{rk:conseqOp'G=1},
we will use instead the partial replacements that we called~(i)$_L$, (ii)$_L$, and~(iii)$_L$.

We also obtain the usual reduction to~$\A_p(H) \subsetneq \A_p(G)$, and hence nontrivial action~$H < G$: 
Namely in any case for~$H$, the hypothesis of non-surjectivity of a map to~$\tilde{H}_* \bigl( \A_p(H) \bigr)$ 
guarantees that that this homology is at least nonzero;
so that in the trivial-action case where~$\A_p(H) = \A_p(G)$,
this nonzero-ness already includes the conclusion for~(H-QC). 

So using that reduction along with~(iii)$_L$, as in Remark~\ref{rk:conseqH<G}
we obtain~$t \geq p \geq 2$ components in our orbit; 
and nontriviality of the decomposition~$\A_p(G) = Y \cup Z$, with $Y,Z,Y_0,V_0 \neq \emptyset$. 
This last gives the usual Mayer-Vietoris sequence in Remark~\ref{rk:MVseqfordecompYZ}. 
Hence from our implications-analysis in Remark~\ref{rk:initimpl}, it will suffice to prove that~(B) holds.

\bigskip

\noindent
We now follow Segev's argument as in our discussion of the proof of his Theorem~1:

Namely we will show that~$V_0 \subseteq \D_p(H)$.
That is, we need to show that~$b : V_0 \to \A_p(H)$ is not surjective in homology,
where we recall that~$V_0 = \{ E \cap H \tq E \cap H > 1 , E \in \A_p(G) - \A_p(H) \}$.
For if we can establish that non-surjectivity, we can exploit the commutative diagram:
\[\xymatrix{
  \tilde{H}_*(V_0) \ar[d]_b \ar[r]^i & \tilde{H}_*(\D_p(H)) \ar[dl]_d \\
  \tilde{H}_* \bigl( \A_p(H) \bigr)
}\]
where the maps are induced by the inclusions.
Since~$d$ is not surjective by hypothesis, $b = d \circ i$ is not surjective---and hence~(B) holds.
Therefore~$\tilde{H}_*(\A_p(G)) \neq 0$, so that we would then complete the proof.

Thus it remains to prove that~$V_0 \subseteq \D_p(H)$.
The proof follows Segev's original; and indeed essentially the same idea is used for Claim~(C) 
in our proof of Theorem~\ref{theoremNormalCaseReduction}:
We saw above that~$V_0 \neq \emptyset$; so we now consider any~$A \in V_0$:
Then~$A = E \cap H$, where~$E \in \A_p(G) - \A_p(H)$ satisfies~$E \cap H > 1$.
Note that any nontrivial~$e \in E - H$ must induce a nontrivial action on the~$G$-orbit~$\{ L_1 ,\ldots , L_t \}$.
Let~$\O$ be a nontrivial orbit of this~$e$-action---so that~$|\O| = p \geq 2$.
Note that nontrivial elements of~$A \leq C_G(e)$ must exhibit isomorphic actions on all members~$L_i$ of~$\O$; 
hence we get~$C_A(\O) = C_A(L_i)$ for each~$L_i \in \O$, where~$|\O| \geq 2$---and
this implies the condition which defines~$A \in \D_p(H)$.  
So we have now completed the proof.          
\end{proof}

\section{Some more concrete consequences of the generalized method}
\label{sec:concreteconsqH1etc}

In this section, we establish several corollaries of our primary technical Theorem~\ref{theoremNormalCaseReduction};
including our main result Theorem~\ref{theorem2}.

\bigskip

First for Segev's Theorem~2 in~\cite{Segev} (which we stated as Theorem~\ref{theoremSegevOriginal} here),
we will deduce the \textit{local} version from our Theorem~\ref{theoremNormalCaseReduction}---after assuming
one of the inductive hypotheses~(H1) or~(H$L(p)$).
Recall we saw that Theorem~\ref{theoremNormalCaseReduction} is an analogue of Segev's Theorem~2
which assumes the~$\A_p(H)$-form of the hypothesis in Remark~\ref{rk:ApHformhypThm2};
below we instead assume hypotheses on the individual components~$L_i$, 
in analogy with the original hypotheses of Theorem~2:

\begin{corollary}
\label{cor:genTheorem2Segev}
Let~$L$ be a component of~$G$, and~$L_1,\ldots, L_t$ its~$G$-orbit.
We write~$H := \cap_{i}\ N_G(L_i)$ for the local-kernel on the orbit;
and we recall the posets~$\A_i$ from Definition~\ref{defn:CalAi}.
Assume that~$G$ satisfies~(H1) or~(H$L(p)$).

If~$\A_p(L_i) \to \A_i$ is nonzero in homology for each~$i \geq 1$, 
then~$G$ satisfies~(H-QC).

In particular, this holds if~$\A_p(L) \to \A$ is nonzero in homology, where~$\A$ is one of the following:
  \[ \Imageposet{H}{L},\,\, \Imageposet{G}{L},\,\, \A_p \bigl( \Aut_H(L) \bigr),\,\,
                                                   \A_p \bigl( \Aut_G(L) \bigr),\,\, \A_p \bigl( \Aut(L) \bigr) .
  \]
\end{corollary}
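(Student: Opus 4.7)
The plan is to apply Theorem~\ref{theoremNormalCaseReduction} via a well-chosen subposet $\B \subseteq \A_p(H)$. First, Remark~\ref{remarkH1andHLp} lets us reduce to the case (H$L(p)$) (under the usual assumption $O_p(G) = 1$ for (H-QC)); so $p$ divides $|L|$, and $C_G(N)$ satisfies (H-QC), where $N := L_1 \cdots L_t$. Since $N$ is a product of a $G$-orbit, it is normal in $G$, so $C_G(N)$ is as well, and $O_p(C_G(N))$---being characteristic in $C_G(N)$---is normal in $G$, forcing $O_p(C_G(N)) = 1$. Hence (H-QC) for $C_G(N)$ yields $\tilde{H}_*(\A_p(C_G(N))) \neq 0$. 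Observe also that $C_G(N) \leq H$ (it normalizes each $L_i$), so $\A_0 = \A_p(C_H(N)) = \A_p(C_G(N))$ has nonzero reduced homology.

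The key construction is $\B := \A_p(N \cdot C_H(N)) \subseteq \A_p(H)$. The subgroups $N$ and $C_H(N)$ commute with intersection $Z(N)$, a $p'$-group (since each $Z(L_i)$ is $p'$ under $O_p(G) = 1$), so by the standard Quillen product-join argument there is a homotopy equivalence $\B \simeq \A_0 \join \A_p(L_1) \join \cdots \join \A_p(L_t)$. The main technical step---which I expect to be the chief obstacle---is to verify that under this equivalence $\psi_H|_{\B}$ corresponds, on reduced homology, to the tensor product $(\mathrm{id}_{\A_0})_* \otimes (\iota_1)_* \otimes \cdots \otimes (\iota_t)_*$, where $\iota_i : \A_p(L_i) \to \A_i$ is the natural inclusion. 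For $E \in \B$ admitting the internal direct decomposition $E = E_0 \times E_1 \times \cdots \times E_t$ with $E_0 = E \cap C_H(N)$ and $E_i = E \cap L_i$ (valid on the $p$-part), Definition~\ref{defn:psiH} gives $\psi_H(E) = \overline{E_{i_E}} \in \A_{i_E}$ where $i_E = \max\{j \geq 1 \tq E_j \neq 1\}$ (or $i_E = 0$ when all $E_j = 1$), matching the join-map action. Once this identification is in hand, the Künneth formula~\eqref{equationJoinHomology} together with the hypothesis (applied to each $L_i$ via $G$-conjugation with $L$) and $\tilde{H}_*(\A_0) \neq 0$ from the first paragraph force $(\psi_H|_{\B})_* \neq 0$, and Theorem~\ref{theoremNormalCaseReduction} yields (H-QC). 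The subcase $\A_0 = \emptyset$ (i.e., $p \nmid |C_G(N)|$) is analogous, using $\B = \A_p(N)$ and the reduced codomain $X = \A_1 \join \cdots \join \A_t$ permitted by the proof of Theorem~\ref{theoremNormalCaseReduction}.

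For the ``In particular'' claim, each listed codomain $\A$ contains $\Imageposet{H}{L}$ in a natural way: $\Imageposet{H}{L} \subseteq \A_p(\Aut_H(L)) \subseteq \A_p(\Aut_G(L)) \subseteq \A_p(\Aut(L))$, and $\Imageposet{H}{L} \subseteq \Imageposet{G}{L}$ since $N_H(L) \leq N_G(L)$. Factoring $\A_p(L) \to \Imageposet{H}{L} \to \A$ therefore shows that nonzero-ness of the hypothesis-map forces nonzero-ness of $\A_p(L) \to \Imageposet{H}{L}$. Since $H$ is normal in $G$ (as $g H g^{-1} = \bigcap_i N_G(g L_i g^{-1}) = H$), conjugation by $g$ with $g L g^{-1} = L_i$ transports this to nonzero-ness of $\A_p(L_i) \to \Imageposet{H}{L_i}$; and $\A_i = \Imageposet{C_i(H)}{L_i}$ embeds naturally into $\Imageposet{H}{L_i}$ through $\Aut_{C_i(H)}(L_i) \hookrightarrow \Aut_H(L_i)$. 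Factoring $\A_p(L_i) \to \A_i \to \Imageposet{H}{L_i}$ then yields the required nonzero-ness of $\iota_i$ for each $i$, completing the hypotheses of the first part.
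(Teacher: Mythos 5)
Your proposal is correct and follows essentially the same route as the paper: reduce to~(H$L(p)$) via Remark~\ref{remarkH1andHLp}, restrict to the subposet~$\B = \A_p(H_0)$ for~$H_0 = N\,C_H(N)$, identify~$\psi_H|_\B$ with the standard central-product join-equivalence (onto~$\A_0 * \A_p(L_1) * \cdots * \A_p(L_t)$) followed by the join of inclusions~$\A_p(L_i) \hookrightarrow \A_i$, and invoke the K\"unneth formula together with Theorem~\ref{theoremNormalCaseReduction}; the paper does exactly this (using the notation~$H_0$, $\psi_{H_0}$, $j$), appealing to~\cite[Prop~2.6]{Qui78} for the join-equivalence step that you flag as the ``chief obstacle.'' The only cosmetic deviation is in the ``In particular'' reduction, where you factor through the intermediate poset~$\Imageposet{H}{L}$ and then conjugate, whereas the paper directly uses the chain~$\A_p(L_i) \subseteq \A_i \subseteq \A^g$ for the conjugate poset~$\A^g$; both are valid (and you also usefully make explicit the observation~$O_p(C_G(N)) = 1$, which the paper leaves implicit when it asserts~$\tilde H_*(\A_p(C_H(N))) \neq 0$ from~(H$L(p)$)).
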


\begin{proof}%[Proof of Corollary \ref{coroTheorem2Segev}]
As usual for~(H-QC), we assume that~$O_p(G) = 1$.
By Remark~\ref{remarkH1andHLp}, we can assume that~(H$L(p)$) holds.
In particular, $p$ divides the order of~$L$; 
and as in the proof of~Theorem~\ref{theoremNormalCaseReduction},
we have the partial replacements (i)$_L$, (ii)$_L$, and (iii)$_L$ for Remark~\ref{rk:conseqOp'G=1}. 
And then as in Definition~\ref{defn:CalAi} (which used~(i)$_L$ for faithfulness in Lemma~\ref{lemmaPOuterPreimage}), 
we see that the maps~$\A_p(L_i) \to \A_i$ in the hypothesis may be regarded as inclusions~$\A_p(L_i) \subseteq \A_i$.

\bigskip

\noindent
First, we prove the ``In particular" reduction of the Theorem:

Fix~$i$, and choose~$g \in G$~with $L_i = L^g$.
Let~$\A$ be one of the posets in the statement:
  \[ \Imageposet{H}{L},\,\, \Imageposet{G}{L},\,\, \A_p \bigl( \Aut_H(L) \bigr),\,\,
                                                   \A_p \bigl( \Aut_G(L) \bigr),\,\, \A_p \bigl( \Aut(L) \bigr) .
  \]
Since we saw in Definition~\ref{defn:CalAi} that~$\A_i \subseteq \Aut_H(L_i)$, 
we can extend our earlier inclusion-statement to~$\A_p(L_i) \subseteq \A_i \subseteq \A^g$
for any of these~$\A$---where~$\A^g$ is the analogous~$\A$-poset for~$L^g$ obtained via the conjugation action.
Now we are assuming for the ``In particular'' statement that~$\A_p(L) \to \A$ is not the zero map in homology;
then by the conjugation action, neither is~$\A_p(L_i) = \A_p(L^g) \to \A^g$.
Hence~$\A_p(L_i) \to \A_i$ is not the zero map in homology---and
we have reduced to the main hypothesis of Corollary~\ref{cor:genTheorem2Segev}, as claimed.

\bigskip

\noindent
Now we turn to the main proof of Corollary~\ref{cor:genTheorem2Segev}:

In view of our discussion of the hypothesis, we will see that the proof parallels
the deduction of the~$\A_p(H)$-form of the hypothesis of Segev's Theorem~2,
in our earlier discussion leading up to Remark~\ref{rk:ApHformhypThm2}.

As a brief initial overview:
The earlier direct product group~$E(G) = L_1 \times \cdots \times L_n$
will now be replaced by the ``inner automorphism'' part of~$H$, 
namely~$H_0 := C_H(N) L_1 \cdots L_t$---which this time is instead a {\em central\/} product, over a $p'$-center.  
We still get that~$\A_p(H_0)$ is the join of corresponding factors~$\A_p \bigl( C_H(N) \bigr)$ and ~$\A_p(L_i)$;
with homology given by the tensor product of the homology of the individual terms. 
Now our hypothesis for Corollary~\ref{cor:genTheorem2Segev} is
that each of the latter terms gives a nonzero image in~$\tilde{H}_*(\A_i)$ when~$i \geq 1$;
while we get~$\tilde{H}_*(\ \A_p \bigl( C_H(N) \bigr)\ ) \neq 0$ using~(H$L(p)$), 
and this maps via the identity to ~$\tilde{H}_*(\A_0)$.
Hence the tensor product of these nonzero images is nonzero in the homology~$\tilde{H}_*(X)$ of the join~$X$ of the~$\A_i$.
Thus for~``$\B$'' given by~$\A_p(H_0)$, we get that~$\psi_H |_{\B}$ is nonzero in homology---namely 
the hypothesis of Theorem~\ref{theoremNormalCaseReduction}, which we quote to complete the proof of~(H-QC).

We now expand the above rough outline to a more formal argument: 
Let $\psi_H : \A_p(H) \to X$ denote the map in Theorem~\ref{theoremNormalCaseReduction};
we want to verify the hypothesis there that the map~$\psi_H |_{\B}$ should be nonzero in homology.
Now~$\psi_H |_{\B}$ arises from~$\B = \A_p(H_0) \subseteq \A_p(H)$ followed by~$\psi_H$.
And we still have the analogue of the property in the discussion before Remark~\ref{rk:ApHformhypThm2},
as indicated in Definition~\ref{defn:psiH},
that the restriction of~$\psi_H$ to each~$L_i$ is the ``inclusion''~$\A_p(L_i) \subseteq \A_i$ above 
(and the identity on~$\A_p \bigl( C_H(N) \bigr) = \A_0$).
This means that the restriction~$\psi_{H_0} := \psi_H |_{\A_p(H_0)}$ just gives 
the standard homotopy equivalence of~$\A_p(H_0)$ for the central-product group~$H_0$,
with the join of the $\A_p$-posets for its factors~$C_H(N)$ and~$L_i$
(cf.~\cite[Prop~2.6]{Qui78} or Proposition~\ref{variantQuillenFiber}).
That is, we have the maps:
  \[ \A_p \bigl( H_0 ) \overset{\psi_{H_0}}{\to } \A_p \bigl( C_H(N) \bigr) * \A_p(L_1) \ldots * \A_p(L_t)
                       \overset{j}{\to } \A_0 * \A_1 * \ldots *\A_t = X ,
  \]
where~$\psi_{H_0}$ is a homotopy equivalence, and~$j$ is an inclusion.
In particular, we have~${\psi_H}|_{\B} = j \circ \psi_{H_0}$.
Furthermore, letting~$j_i : \A_p(L_i) \to \A_i$ denote the inclusion,
with~$j_0 : \A_p \bigl( C_H(N) \bigr) \to \A_p \bigl( C_H(N) \bigr)$ the identity map,
we have~$j = j_0 * j_1 * \ldots * j_t$ as a fuller expression for the inclusion in the join of the posets.
Hence the map induced by~$j$ in homology is $j_* = (j_0)_* \otimes (j_1)_*\otimes \ldots \otimes (j_t)_*$.
The map~$j_*$ is nonzero since each map~$(j_i)_*$ for $i \geq 1$ is nonzero by hypothesis,
and we are taking tensor product over a field.
Hence the composition $({\psi_H}|_{\B})_* = (j \circ \psi_{H_0})_* = j_* \circ (\psi_{H_0})_*$
(where~$\psi_{H_0}$ is a homotopy equivalence) is a nonzero map.
This give the hypothesis for Theorem~\ref{theoremNormalCaseReduction}, which we quote to complete the proof of~(H-QC).
\end{proof}

Note that Theorem~\ref{theorem2} is just the particular choice~$\A = \Aut_H(L)$ in Corollary~\ref{cor:genTheorem2Segev}.
The two final statements follow, since they involve the further-specialized case of equality---that is, we have~$L = \Aut_H(L)$,
so that we are considering the image just of the identity map:
and the homology of~$\A_p(L)$ is nonzero by Theorem~\ref{thm:HomologyCalA}, 
since~$L$ is quasisimple with~$p'$-center by~(i)$_L$.

\bigskip

We give next an alternative corollary of Theorem~\ref{theoremNormalCaseReduction}:
It provides an elimination result for the case where (roughly) 
outer $p$-automorphisms do occur, but they are ``separated''---rather than being embedded diagonally across several components:
the model such case would be just a direct product of automorphism groups of individual components.
In particular, the value~$t = 1$ covers the case of a normal component:

\begin{corollary}
\label{cor:theorem2MoreGeneral}
Let~$L$ be a component of~$G$ and~$L_1 , \ldots , L_t$ its~$G$-orbit.
Assume further that~$G$ satisfies~(H1) or~(H$L(p)$), and also that:
\begin{enumerate}[label=(\roman*)]
\item For each~$1 \leq i \leq t$, there exists a subgroup~$F_i$ such that:

\centerline{
   $L_i \leq F_i \leq N_G(L_i)$,\  $[F_i , C_G(L_i)] = 1$,\  and~$F_i \cap C_G(L_i)$ is a~$p'$-group.
            }

\item $\A_p \bigl( N_G(L_i) \bigr) = \A_p \bigl( F_i C_G(L_i) \bigr)$ for all~$i$.
\item If~$i \neq j$ then~$[F_i , F_j] = 1$.
\end{enumerate}
Then~$G$ satisfies~(H-QC).
\end{corollary}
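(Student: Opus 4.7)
The plan is to apply Theorem~\ref{theoremNormalCaseReduction} with the subposet $\B := \A_p(H_0) \subseteq \A_p(H)$, where $H_0 := C_H(N)\cdot F_1 \cdots F_t$. First, via Remark~\ref{remarkH1andHLp} I reduce to the case $O_p(G) = 1$ with (H$L(p)$) holding; in particular $p \mid |L|$, and $C_G(N) = C_H(N)$ satisfies (H-QC), so $\tilde{H}_*(\A_0) \neq 0$ whenever $\A_0 \neq \emptyset$ (the case $\A_0 = \emptyset$ being handled by dropping that factor from $X$, as in the proof of Theorem~\ref{theoremNormalCaseReduction}). Next I would extract the structural consequences of (i)--(iii): for $j \neq i$, since $L_j \leq F_j$ and $[F_i,F_j] = 1$, we get $F_i \leq C_G(L_j)$, so $F_i \leq \bigcap_k N_G(L_k) = H$ and in fact $F_i \leq C_H(L_{i+1}\cdots L_t) = C_i(H)$. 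Combined with (i) and $C_H(N) \leq C_G(L_i)$, this shows that $C_H(N), F_1, \ldots, F_t$ pairwise commute, while each pairwise intersection is contained in some $F_k \cap C_G(L_k)$ and hence is a $p'$-group.

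Second, I would use hypothesis (ii) and Dedekind's law to identify the image-posets. Any $E \in \A_p(C_i(H)) \subseteq \A_p(N_G(L_i)) = \A_p(F_iC_G(L_i))$ lies in $F_i C_G(L_i) \cap C_i(H) = F_i\bigl(C_G(L_i)\cap C_i(H)\bigr) = F_i C_{i-1}(H)$; so $\pi_i$ maps $\A_p(C_i(H))$ onto $\A_p(\bar{F}_i)$, where $\bar{F}_i := F_i/(F_i\cap C_G(L_i))$. Because the kernel $F_i \cap C_G(L_i)$ is a $p'$-group, this surjection identifies $\A_i = \A_p(\bar{F}_i)$ and yields a homotopy equivalence $\A_p(F_i) \simeq \A_i$. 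The pairwise commutativity plus $p'$ intersections makes $H_0$ the quotient of $C_H(N) \times F_1 \times \cdots \times F_t$ by a central $p'$-subgroup; together with Quillen's standard equivalence for a direct product, this yields
\[
\A_p(H_0) \;\simeq\; \A_p(C_H(N)) * \A_p(F_1) * \cdots * \A_p(F_t) \;\simeq\; \A_0 * \A_1 * \cdots * \A_t \;=\; X.
\]

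Third, the key verification is that this composite coincides with the restriction $\psi_H|_\B$. For $E \leq H_0$, lift $E$ to a $p$-subgroup $E_0 \times E_1 \times \cdots \times E_t$ of the covering direct product; then $\pi_k(E) = \bar{E}_k$ (the image of $E_k$ in $\bar{F}_k$), because $E_j \leq C_G(L_k)$ for all $j \neq k$. Hence $\psi_H(E) = \pi_{i_E}(E) = \bar{E}_{i_E}$ picks out exactly the image of $E$ in its topmost nontrivial coordinate of $X$---precisely the standard Quillen product-to-join formula of Definition~\ref{defn:iEandpsi}. Therefore $(\psi_H|_\B)_*$ is an isomorphism onto $\tilde{H}_*(X)$, which is nonzero by the K\"unneth formula for joins: $\tilde{H}_*(\A_i) \neq 0$ for $i \geq 1$ by Theorem~\ref{thm:HomologyCalA} (each $L_i$ is quasisimple with $p'$-center by (i)$_L$), and $\tilde{H}_*(\A_0) \neq 0$ when $\A_0$ is present. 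Theorem~\ref{theoremNormalCaseReduction} then yields (H-QC). The main obstacle I anticipate is the careful bookkeeping needed to identify $\psi_H|_\B$ with the standard Quillen equivalence in the presence of the (typically nontrivial) $p'$ pairwise intersections among the factors of $H_0$, and disposing cleanly of the boundary case $\A_0 = \emptyset$.
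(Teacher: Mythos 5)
Your proposal is correct and follows the same essential line as the paper's own proof: reduce to (H$L(p)$) and $O_p(G)=1$, extract from (i)--(iii) that $C_H(N),F_1,\ldots,F_t$ pairwise commute with $p'$ pairwise intersections, identify each $\A_i$ with $\A_p\bigl(F_i/Z(F_i)\bigr)$, and exhibit a homotopy equivalence $\A_p(H_0)\simeq X$ whose homology is nonzero by~(H$L(p)$), Theorem~\ref{thm:HomologyCalA}, and the K\"unneth join formula. The one genuine difference is technique at the homotopy-equivalence step: the paper first shows $\A_p(H)=\A_p(H_0)$ via Lemma~\ref{lemmaInnerDecomposition} and then proves each factor map $\varphi_i$ is a homotopy equivalence by checking contractibility of fibers (Proposition~\ref{variantQuillenFiber}), whereas you restrict to the subposet $\B=\A_p(H_0)$ and identify $\psi_H|_\B$ outright with the standard Quillen product-to-join equivalence. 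Your route is a touch more streamlined, while the paper's feeds more naturally into its $\varphi_i/\Phi_{i,j}$ machinery from Lemma~\ref{lemmaRelationPhiPsi}; both establish the same thing.

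One sentence of yours needs smoothing: you cannot in general ``lift $E$ to a $p$-subgroup $E_0\times E_1\times\cdots\times E_t$'' of the covering direct product, since a $p$-subgroup of a direct product is typically diagonal rather than itself a direct product. What you actually use---and what is true---is that for each $k$ the coordinate projection of the lifted $E$ into the $k$-th factor agrees with $\pi_k(E)$, so the recipe ``take the image in the topmost nontrivial coordinate'' for $\psi_H|_\B$ is exactly the formula in Definition~\ref{defn:iEandpsi}. Replace the ``lift to a direct product'' phrasing with ``the $k$-th coordinate projection of (the lift of) $E$'' and the argument is airtight.
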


\begin{proof}
In view of~Remark \ref{remarkH1andHLp}, since we are trying to prove~(H-QC), we can work directly under~(H$L(p)$).
As usual for~(H-QC) we assume that~$O_p(G) = 1$.

We will prove that the above special conditions on the~$F_i$ yield~$(\psi_H)_* \neq 0$---giving
the hypotheses of Theorem~\ref{theoremNormalCaseReduction}, and hence~$\tilde{H}_* \bigl( \A_p(G) \bigr ) \neq 0$.

\bigskip

By hypothesis~(i), $F_i \leq C_G \bigl( C_G(L_i) \bigr)$; so~$C_G(L_i) \leq C_G(F_i)$, and hence~$C_G(L_i) = C_G(F_i)$.
It follows that~$Z(F_i) = F_i \cap C_G(F_i)$ is a~$p'$-group, again using~(i).

When~$t \geq 2$ and~$i \neq j$, we get~$[F_i,F_j] = 1$ by hypothesis~(iii), so~$F_i \leq C_G(F_j)$;
and then we see that~$F_i \cap F_j \leq Z(F_i) \cap Z(F_j)$ is a~$p'$-group.
Together with hypothesis~(ii) and Lemma~\ref{lemmaInnerDecomposition}, these observations allow us to conclude that:

\begin{equation}
\label{eq:ApH}
  \A_p(H) = \A_p \bigl( F_1 \ldots F_t C_H(F_1 \ldots F_t) \bigr) , 
\end{equation}

\noindent
and~$C_H( F_1 \ldots F_t ) = C_G( F_1 \ldots F_t ) = C_G( L_1 \ldots L_t )$.
Moreover, $F_1 \ldots F_t C_H( F_1 \ldots F_t) $ is a central product by~$p'$-centers.
Let~$N := L_1 \ldots L_t$, so that~$C_G(N) = C_H(N) = C_H( F_1 \ldots F_t )$.
(Notice that~(\ref{eq:ApH}) and these conditions show that here,
$H$ itself has some of the formal properties of the group~$H_0$ in the proof of Corollary~\ref{cor:genTheorem2Segev}.)

Now we verify the hypotheses of Theorem~\ref{theoremNormalCaseReduction}:
Namely we will show that the map given there
by~$\psi_H : \A_p(H) \to \A_p \bigl( C_H(N) \bigr) * \A_1 * \ldots *\A_t =: X$ is not zero in homology---because 
it is a homotopy equivalence, onto the join~$X$---which has nonzero homology as we will see below.
Recall from Definitions~\ref{defn:CalA} and~\ref{defn:CalAi} 
that we have~$\A_i = \Im(\ \A_p \bigl( C_i(H) \bigr) \to [ \A_p \bigl( \Aut_{C_i(H)}(L_i) \bigr)\ \cup \{ 1 \} ]\ )\ - \{ 1 \}$,
where we recall that~$C_i(H) = C_H( L_{i+1} \ldots L_t )$.

First, we claim that~$\A_i = \A_p \bigl( F_i/Z(F_i) \bigr)$.
When~$t \geq 2$ and~$i \neq j$, by hypothesis~(iii) along with~$C_G(F_j) = C_G(L_j)$ above, we have:
  $$ F_i \leq \bigcap_{j \neq i}\ C_G(F_j) = \bigcap_{j \neq i}\ C_G(L_j) \leq C_i(H) . $$
Moreover using~(i), $[F_i , C_{i-1}(H)] \leq [F_i , C_G(L_i)] = 1$,
and~$F_i \cap C_{i-1}(H) \leq F_i \cap C_G(L_i)$ is a~$p'$-group.
Therefore, if~$A,B \in \A_p(F_i)$ with~$A C_{i-1}(H) = B C_{i-1}(H)$, then it is not hard to see that~$A = B$.
Now for $A \in \A_p \bigl( N_G(L) \bigr)$, by~(ii) we have~$A \leq A_{0} \times A_{1}$,
where~$A_0$ is the projection of~$A$ onto~$F_i$, and~$A_1$ is the projection of~$A$ onto~$C_G(L_i)$.
If we take the quotient by~$C_{i-1}(H)$, we see that~$A C_{i-1}(H) / C_{i-1}(H) = A_0$.
Hence the image of the map that goes from~$\A_p \bigl( C_i(H) \bigr)$ into~$\A_p \bigl( \Aut_{C_i(H)}(L_i) \bigr) \cup \{ 1 \}$
is exactly~$\A_p \bigl( F_i/Z(F_i) \bigr) \cup \{ 1 \}$; so we do indeed have~$\A_i = \A_p \bigl( F_i/Z(F_i) \bigr)$.

Second, we prove that the map $\varphi_i : \A_p\bigl( C_i(H) \bigr) \to \A_p \bigl( C_{i-1}(H) \bigr) * \A_i$
is a homotopy equivalence for each~$i$:
In the case~$A \in \A_p \bigl( F_i/Z(F_i) \bigr)$, $A$ can be viewed in~$\A_p(F_i)$ since~$Z(F_i)$ is a~$p'$-group, so:
  $$ \varphi^{-1}\bigl(\ (\ \A_p \bigl( C_{i-1}(H) \bigr) * \A_i\ )_{\leq A}\ \bigr) = \A_p \bigl( A C_{i-1}(H) \bigr) , $$
which is contractible since~$O_p \bigl( A C_{i-1}(H) \bigr) \geq A > 1$.
In the other case~$A \in \A_p \bigl( C_{i-1}(H) \bigr)$, we see:
  $$ \varphi^{-1} \bigl(\ (\ \A_p \bigl( C_{i-1}(H) \bigr) * A_i\ )_{\leq A}\ \bigr) = \A_p(A) , $$
which is also contractible.
Therefore, by Quillen's fiber lemma (cf. Proposition \ref{variantQuillenFiber}), $\varphi_i$ is a homotopy equivalence.
And then using Lemma \ref{lemmaRelationPhiPsi}:
each~$\Phi_{i,t}$ is a homotopy equivalence via the relation~$\Phi_{i,t} = \varphi_i * \Id_{\A_{i+1}} * \ldots * \Id_{\A_t}$;
and so we get that~$\psi_H$~is a homotopy equivalence,
since we in fact get~$\psi_H = \Phi_{1,t} \circ \ldots \circ \Phi_{t,t}$ as the composition of homotopy equivalences
%We can also see this by noting that $\psi_H$ is the homotopy equivalence of the join of posets,
%after using equation \ref{eqApH} and the fact that $F_1\ldots F_t C_H(N)$ is a central product by $p'$-centers.

It remains to show that~$\psi_H$ is not the zero map in homology.
By~(H$L(p)$), $C_H(N)$ satisfies~(H-QC);
and~$O_p \bigl( C_H(N) \bigr) = 1$, since~$C_H(N)$ is normal in~$G$ with~$O_p(G) = 1$.
Therefore~$\tilde{H}_*(\ \A_p \bigl (C_H(N) \bigr)\ ) \neq 0$.
Now, for each $i$, the quotient~$F_i/Z(F_i)$ is an almost simple group,
so by~\cite{AK90} (cf.~Theorems~\ref{theoremAK} and \ref{thm:HomologyCalA} below)
we have that~$\tilde{H}_*(\ \A_p \bigl( F_i/Z(F_i) \bigr)\ ) \neq 0$.
Finally by the homology decomposition of the join of spaces (see equation~\ref{equationJoinHomology}), we get nonzero homology
for the join~$X = \A_p \bigl( C_H(N) \bigr) * \A_p \bigl( F_1/Z(F_1) ) * \ldots * \A_p \bigl (F_t/Z(F_t) \bigr)$;
and so since~$\psi_H$ is a homotopy equivalence of~$\A_p(H)$ with~$X$, it is not the zero map in homology.
And this is what remained to be established.
\end{proof}

\section{Some properties of the image poset in the generalized method}

This section begins with some properties of the image poset~$\Imageposet{G}{L}$.
We then show~$\tilde{H}_*( \Imageposet{G}{L} ) \neq 0$ when~$L$ is quasisimple---extending
the almost-simple case in Aschbacher-Kleidman~\cite{AK90} to the almost-quasisimple case, 
via posets of this~``$\Aut_G(L)$''-form.
Later, we also give concrete sufficient conditions,
on the centralizers of whatever outer~$p$-automorphisms of a component~$L$ do occur in~$G$, 
which will allow us to establish~(H-QC).

\begin{definition}
\label{defn:admitonlycyclic}
Let~$L$ be a subgroup of~$G$.
If~$A \in \A_p \bigl( N_G(L) \bigr)$ is an elementary abelian~$p$-subgroup such that~$A \cap \bigl( L C_G(L) \bigr) = 1$,
then we say that \textit{$A$ is a~$p$-outer of~$L$ in~$G$}.
(We emphasize that the word outer is here being used
in the ``purely outer'' sense---that~$A$ contains {\em no\/} nontrivial inner automorphisms.)
We define the corresponding posets of~$p$-outers by:
  \[ \Outposet_G(L)       := \{ A \in \A_p \bigl( N_G(L) \bigr) \tq A \text{ is~$p$-outer on }L \} \]
  \[ \hat{\Outposet}_G(L) := \Outposet_G(L) \cup \{ 1 \} . \]
If~$\P \subseteq \A_p(G)$, let~$\Outposet_{\P}(L) := \P \cap \Outposet_G(L)$
and~$\hat{\Outposet}_{\P}(L) := \Outposet_{\P}(L) \cup \{ 1 \}$.

Sometimes we work in subposets~$\A \subseteq \Imageposet{G}{L}$ of the poset in Definition~\ref{defn:CalA}.
Since the members~$A$ of~$\A$ are then quotients of subgroups of~$N_G(L)$ modulo~$C_G(L)$,
the $p$-outer condition becomes in effect just~$A \cap \bigl( L/Z(L) \bigr)  = 1$.
So we get:

\centerline{
     $\Outposet_{\A}(L) = \A - \N_{\A}(L)$ ,
            }

\noindent
in the language of the inflated poset~$\N(-)$.

We say that~\textit{$L$ admits only cyclic~$p$-outers in~$G$}
if~$\Outposet_G(L)$ is non-empty, and its members are cyclic (rather than of rank~$\geq 2$).
This means that the~$p$-outers have order~$p$---and there are no poset inclusions among them. 
For example, if~$\Out(L)$ has cyclic Sylow~$p$-subgroups, then either the cyclic-only case or the empty case holds.

Later we will use the following properties: 
Assume that~$A \in \Outposet_G(L)$, for an~$L$ which admits only cyclic~$p$-outers; set~$\A :=\A_{G,L}$
and~$\N := \N_{\A}(L)$.
Then when~$E \in \A_{> A}$, by the order-$p$ property of cyclic-only,
we have~$E = A \times B$ for nontrivial~$B \in \A_p(L)$, so that $E \in \N_{\A}(L)$;
thus we see that~$\A_{> A} = \N_{> A}$.
\donerk
\end{definition}

We now use the above notion of~$p$-outers
to further study the properties of the image poset~$\Imageposet{G}{L}$ in Definition~\ref{defn:CalA}.
So recall that if~$L \leq G$ is a subgroup
and $\pi : \A_p \bigl( N_G(L) \bigr) \to \A_p \bigl( \Aut_G(L) \bigr) \cup \{ 1 \}$
is the map induced by the quotient $N_G(L) \to \Aut_G(L)$, then:
  \[ \Imageposet{G}{L} = \pi(\ \A_p \bigl( N_G(L) \bigr)\ ) - \{ 1 \} = \pi(\ \A_p \bigl( N_G(L) \bigr) - \A_p \bigl( C_G(L) \bigr)\ ). \]
In the following lemma, we will characterize the poset~$\Imageposet{G}{L}$
in terms of the~$p$-outers~${\Outposet}_G(L)$ of~$L$.
And then after that,
we will prove that~$\tilde{H}_*(\Imageposet{G}{L}) \neq 0$ if~$L$ is a quasisimple subgroup of~$G$, by using~\cite{AK90}.
This quasisimple result can be seen as a small improvement of the almost-simple case of Quillen's conjecture in~\cite{AK90},
since we show that the conjecture also holds for the poset~$\Imageposet{G}{L}$ consisting of quotients,
and not only for posets consisting of subgroups, such as~$\A_p(T)$ with~$F^*(T) = L$.

Thus as just indicated, we first provide some different descriptions of the image poset.
(We use the bar notation~$\bar{E}$ to denote the image of an element~$E$ under the map~$\pi$---notably when $E$ is a~$p$-outer.)

\begin{lemma}[{Description of~$\Imageposet{G}{L}$}]
\label{lemmaPOuterPreimage}
Let~$L \leq G$ be a subgroup with~$p'$-center, and let:
  $$ \pi : \A_p \bigl( N_G(L) \bigr) \to \A_p \bigl( \Aut_G(L) \bigr) \cup  \{ 1 \} $$
be the map induced by the quotient $N_G(L) \to \Aut_G(L)$.
Then since $Z(L)$ is a $p'$-group, $\pi$ embeds~$\A_p(L) \cong \A_p(\overline{L})$ into~$\A_p \bigl( \Aut_G(L) \bigr)$,
and hence into~$\A_{G,L}$.
So we will identify~$\A_p(L)$ with its (barred) image via~$\pi$;
and it is also convenient if slightly abusive to just write unbarred~$L$ in place of~$\overline{L}$ itself.
Then we have that:
  \[ \Imageposet{G}{L} = \bigcup_{E \in \hat{\Outposet}_G(L)}\ \A_p(L \overline{E}) . \]
Equivalently, $\Imageposet{G}{L}$ is isomorphic to the poset:
  \[     \{\ EC_G(L) \tq E \in \A_p \bigl( N_G(L) \bigr) - \N_G \bigl( C_G(L) \bigr)\ \}
       = \{\ EC_G(L) \tq E \in \A_p \bigl( N_G(L) \bigr) ,\ C_E(L) = 1\ \} .
  \]
In particular, we have that:
  $$ \Outposet_{\Imageposet{G}{L}}(L) = \{ \overline{E} \tq E \in \Outposet_G(L) \} , $$
and if~$\Outposet_G(L) = \emptyset$ (i.e.~there are no~$p$-outers), then~$\Imageposet{G}{L} = \A_p(L)$.
\end{lemma}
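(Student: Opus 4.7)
My plan is to verify the main equality via a subgroup-complement argument in elementary abelian $p$-groups, reducing both inclusions to a kernel computation $LE \cap C_G(L) = Z(L)$ and an application of Schur--Zassenhaus for the reverse direction. The preliminary assertion that $\pi$ embeds $\A_p(L)$ into $\A_{G,L}$ is immediate: for $A \in \A_p(L)$, $A \cap C_G(L) = A \cap Z(L) = 1$ since $A$ is a $p$-group and $Z(L)$ is $p'$, so $\pi|_A$ is injective and $\pi(A) \neq 1$.

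For the forward inclusion $\Imageposet{G}{L} \subseteq \bigcup_{E \in \hat{\Outposet}_G(L)} \A_p(L\overline{E})$, I will start with $A \in \A_p(N_G(L))$ with $\overline{A} := \pi(A) \neq 1$, equivalently $A \not\leq C_G(L)$. Viewing $A$ as an $\mathbb{F}_p$-vector space, I set $A_0 := A \cap LC_G(L)$ and pick a vector-space complement $E \subseteq A$, so $A = A_0 \oplus E$. Then $E \cap LC_G(L) = E \cap A_0 = 1$, whence $E \in \hat{\Outposet}_G(L)$; moreover $\pi(A) = \pi(A_0)\pi(E)$ with $\pi(A_0) \leq \overline{L}$ and $\pi(E) = \overline{E}$ (since $E \cap C_G(L) \leq E \cap LC_G(L) = 1$ makes $\pi|_E$ injective), so $\overline{A} \in \A_p(\overline{L}\overline{E}) = \A_p(L\overline{E})$.

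For the reverse inclusion, given any $\overline{A} \in \A_p(L\overline{E})$ I must lift it to some $A' \in \A_p(N_G(L))$ with $\pi(A') = \overline{A}$. Using $E \cap LC_G(L) = 1$ I will first deduce $E \cap L = 1$ and $\overline{E} \cap \overline{L} = 1$, after which a short elementwise check yields the key identity $LE \cap C_G(L) = Z(L)$. Hence the restriction $\pi \colon LE \twoheadrightarrow \overline{L}\overline{E}$ has $p'$-kernel $Z(L)$, and the preimage $\pi^{-1}(\overline{A}) \cap LE$ is an extension of the elementary abelian $p$-group $\overline{A}$ by the $p'$-group $Z(L)$. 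Schur--Zassenhaus supplies a complement $A' \leq LE \leq N_G(L)$ with $A' \cong \overline{A}$ elementary abelian and $\pi(A') = \overline{A}$; since $\overline{A} \neq 1$ forces $A' \not\leq C_G(L)$, this places $\overline{A}$ in $\Imageposet{G}{L}$.

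The remaining identities will follow with little extra work. The poset isomorphism with $\{EC_G(L) : E \in \A_p(N_G(L)),\ C_E(L) = 1\}$ is the correspondence theorem applied via $EC_G(L) \leftrightarrow \overline{E}$, noting that $C_E(L) = E \cap C_G(L)$, and that every $\overline{A} \in \Imageposet{G}{L}$ is realized by such an $E$ via the $A_0$--$E$ decomposition above. For the $\Outposet$-statement: if $\overline{A} \in \Imageposet{G}{L}$ has $\overline{A} \cap \overline{L} = 1$, then $A \cap LC_G(L) \leq A \cap C_G(L)$, so in the decomposition $A_0 \leq C_G(L)$, giving $\pi(A_0) = 1$ and $\overline{A} = \overline{E}$ with $E \in \Outposet_G(L)$; conversely, any $\overline{E}$ coming from $E \in \Outposet_G(L)$ satisfies $\overline{E} \cap \overline{L} = 1$. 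Finally, the case $\Outposet_G(L) = \emptyset$ collapses the union to $\A_p(L\cdot\overline{1}) = \A_p(L)$. The principal obstacle is the lifting step in the reverse inclusion, and this is precisely where the $p'$-center hypothesis becomes crucial, placing us cleanly in a Schur--Zassenhaus setting.
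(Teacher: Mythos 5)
Your proof is substantively correct and takes a somewhat different route from the paper's. The paper reduces at the outset to $Z(L) = 1$, exploiting the poset isomorphism $\A_p(L) \cong \A_p(L/Z(L))$; after that reduction $LE \cap C_G(L) = 1$, so $\pi$ restricts to a poset isomorphism on each $\A_p(LE)$ and the reverse inclusion $\bigcup_E \A_p(L\overline{E}) \subseteq \Imageposet{G}{L}$ becomes essentially automatic (indeed the paper only writes out the forward inclusion). You instead carry the $p'$-center through the whole argument and make the reverse inclusion explicit: you compute $LE \cap C_G(L) = Z(L)$ and invoke Schur--Zassenhaus (Sylow would already suffice here, since $\overline{A}$ is a $p$-group and $Z(L)$ a $p'$-group) to lift $\overline{A}$ to an elementary abelian $A' \leq LE \leq N_G(L)$. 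That is a legitimate alternative, and it has the virtue of pinpointing exactly where the $p'$-hypothesis is used. Your forward inclusion, via the complement decomposition $A = A_0 \oplus E$ with $A_0 := A \cap LC_G(L)$, is the same as the paper's.

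Two small gaps remain as written. First, your justification that $\pi$ ``embeds'' $\A_p(L)$ into $\A_p(\Aut_G(L))$ only shows that $\pi|_A$ is injective for each fixed $A$, giving $\pi(A)\cong A$ and $\pi(A)\neq 1$; for a poset embedding one must also check that $\pi$ reflects the order, i.e.\ $\overline{A}\leq\overline{B}$ implies $A\leq B$ (which in particular yields injectivity of $A \mapsto \overline{A}$). The paper verifies this after reducing to $Z(L)=1$; in your setting one argues $A\leq BC_G(L)=B\times C_G(L)$ (as $B\cap C_G(L)\leq Z(L)$ is $p'$ hence trivial), writes $a=bc$, notes $c=b^{-1}a\in L\cap C_G(L)=Z(L)$, and compares orders to get $c=1$, so $A\leq B$. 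Second, for the middle isomorphism you appeal to the ``$A_0$--$E$'' decomposition, but that decomposition identifies $\overline{E}$, not $\overline{A}$, with a coset $EC_G(L)$; the correct realization takes a complement $E'$ to $C_A(L)$ in $A$, which satisfies $C_{E'}(L)=1$ and $\overline{E'}=\overline{A}$ — precisely the decomposition the paper uses for this step. Both are easy to patch.
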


\begin{proof}
Since~$Z(L)$ is a~$p'$-group by hypothesis, we see that:

  \quad \quad \quad Members of~$\A_p(L)$ act faithfully on~$L$.

\noindent
In particular, 
we have a natural poset isomorphism~$\A_p(L) \to \A_p \bigl( L/Z(L) \bigr)$ induced by the quotient map~$L \to L/Z(L)$.
Therefore, without loss of generality:

  \quad \quad \quad {\em In the remainder of the proof, we assume that~$Z(L) = 1$\/}.

\noindent
Let~$A,B \in \A_p(L)$ such that~$\bar{A} \leq \bar{B}$.
Then~$AC_G(L) \leq BC_G(L)$.
Since~$A$ and~$B$ commute with~$C_G(L)$, and $ A\cap C_G(L) = 1 = B \cap C_G(L)$ by the condition~$Z(L) = 1$, we get~$A \leq B$.
This shows that~$\A_p(L)$ embeds into~$\A_p \bigl( \Aut_G(L) \bigr)$ naturally via~$\pi$.

Now we show that~$\Imageposet{G}{L} = \bigcup_{E\in \hat{\Outposet}_G(L)}\ \A_p(L \overline{E})$:
Consider any~$E \in \A_p \bigl( N_G(L) \bigr)$.
Then we can write~$E = \bigl( E \cap L C_G(L) \bigr) \times E_1$,
for some complement~$E_1$ to~$E \cap  \bigl( LC_G(L) \bigr)$.
So for that purely-outer part we have~$E_1 \in \hat{\Outposet}_G(L)$.
Write~$E_0$ for the projection of~$E \cap \bigl( LC_G(L) \bigr)$ into~$L$.
Note that~$E_0$ and~$E_1$ commute, and $(E_0 E_1) \cap C_G(L) = 1$.
Then:
  \[ \overline{E} = E C_G(L)\ /\ C_G(L) = E_0 E_1 C_G(L)\  /\ C_G(L) \groupiso E_0 E_1 .\]
Finally, since~$E_0 \leq L$, we have that~$\overline{E} \in \A_p(L \overline{E_1})$.

By the isomorphism theorems,
the poset~$\Imageposet{G}{L}$ is isomorphic to the poset of subgroups:
  $$ \{ EC_G(L) \tq E \in \A_p \bigl( N_G(L) \bigr) - \A_p \bigl( C_G(L) \bigr) \} . $$
If~$E \in \A_p \bigl( N_G(L) \bigr) - \A_p \bigl( C_G(L) \bigr)$, write~$E = E_0 C_E(L)$ for some complement~$E_0$ to~$C_E(L)$ in~$E$.
In this case note that~$E_0 \neq 1$.
Then~$E C_G(L) = E_0 C_G(L)$, and~$E_0 \notin \N_G \bigl( C_G(L) \bigr)$.
Therefore:
\begin{align*}
  \Imageposet{G}{L} &\cong \{E C_G(L) \tq E \in \A_p \bigl( N_G(L) \bigr) - \A_p \bigl( C_G(L) \bigr) \} \\
                    &=     \{ E C_G(L) \tq E \in \A_p \bigl( N_G(L) \bigr) - \N_G \bigl( C_G(L) \bigr) \} .
\end{align*}
The ``In particular" part is clear from these descriptions of~$\Imageposet{G}{L}$.
\end{proof}

In particular if~$L$ is simple, then it acts faithfully on itself.
And in that case, we can regard its mapping into the quotient~$\Aut_G(L)$ just as an inclusion;
and so using Lemma~\ref{lemmaPOuterPreimage}
we can write~$\Imageposet{G}{L} = \bigcup_{A \in \hat{\Outposet}_G(L)}\ \A_p(L\bar{A}) \subseteq \A_p \bigl( \Aut_G(L) \bigr)$.
This subposet may not be a poset of subgroups;
i.e.~may not be of the form~$\A_p(T)$ for some almost-simple group~$T \leq \Aut_G(L)$---but it is somewhat analogous.
Our aim now is to show that~$\Imageposet{G}{L}$ behaves like the Quillen poset of an almost-simple group,
so that~$\tilde{H}_*(\Imageposet{G}{L}) \neq 0$.
We use the generalized version of Robinson's Lemma due to Aschbacher-Smith~\cite[Sec~5]{AS93},
together with the proofs in~\cite{AK90} of the almost-simple case of the conjecture.

\begin{lemma}[{\cite[Lemma~0.14]{AS93}}]
\label{lemmaRobinson}
Suppose that a~$q$-hyperelementary%
\footnote{
   Recall that a~$q$-hyperelementary group is a group~$H$ such that~$O^q(H)$ is cyclic.
          }
group~$H$ acts on a poset~$Y$ with~$\tilde{H}_*(Y) = 0$.
Then~$\tilde{\chi}(Y^H) \equiv 0 \mod q$.
In particular, $Y^H$ is non-empty.
\end{lemma}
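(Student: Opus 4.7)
The plan is to exploit the standard structure of a $q$-hyperelementary group: write $H = CQ$ where $C := O^q(H)$ is cyclic of order coprime to $q$ (so $C$ is the Hall $q'$-subgroup of $H$), and $Q$ is a Sylow $q$-subgroup, with $H/C \groupiso Q$ a $q$-group and $C \cap Q = 1$. I will assume $H$ acts on $Y$ by order-preserving bijections, inducing simplicial automorphisms of the order complex~$\K(Y)$; for a poset action in this sense, an elementary check shows $\K(Y)^h = \K(Y^h)$ for each $h \in H$, so $\chi(Y^h)$ is well-defined as the Euler characteristic of the fixed subcomplex.

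The first main step is to show $\tilde{\chi}(Y^C) = 0$, via the Hopf trace formula. Since $\tilde{H}_*(Y) = 0$ with $\QQ$-coefficients, and $Y$ is nonempty (otherwise $\tilde{H}_{-1}(\emptyset) \neq 0$), we have $H_0(Y;\QQ) = \QQ$ with all higher rational homology vanishing; moreover every simplicial self-map must act as the identity on $H_0(Y;\QQ)$. Hence for each $h \in H$ the Lefschetz number satisfies $L(h, Y) = 1$, and the Hopf trace formula yields $\chi(Y^h) = 1$. Since $C$ is cyclic, say $C = \langle g \rangle$, we have $Y^C = Y^g$, and therefore $\chi(Y^C) = 1$, i.e.\ $\tilde{\chi}(Y^C) = 0$.

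The second main step reduces the problem to the classical $q$-group version of Robinson's lemma. Since $C \normal H$, the quotient $Q \groupiso H/C$ acts on the $C$-fixed subposet~$Y^C$, and~$(Y^C)^Q = Y^H$. The standard orbit-counting argument, partitioning the non-fixed simplices of~$\K(Y^C)$ into $Q$-orbits of $q$-power size at least $q$, yields the congruence $\chi(Y^C) \equiv \chi((Y^C)^Q) = \chi(Y^H) \pmod q$. Combining this with the first step, we obtain $\tilde{\chi}(Y^H) \equiv \tilde{\chi}(Y^C) = 0 \pmod q$; and since $\chi(\emptyset) = 0 \not\equiv 1 \pmod q$ for $q \geq 2$, the set $Y^H$ must be non-empty, giving the ``In particular'' claim.

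The main potential obstacle is the passage from the pointwise information $\chi(Y^h) = 1$ to the collective fixed-point set $Y^C$ for the whole subgroup. For cyclic $C$ this is automatic via $Y^C = Y^g$; so the cyclicity built into the definition of $q$-hyperelementary is precisely what allows the clean two-step reduction above. Without it, i.e.\ for an arbitrary $q'$-subgroup action, one would need a more delicate Brown/M\"obius inversion over the subgroup lattice of the $q'$-part to get an analogous congruence. The hyperelementary structure $H = CQ$ is thus exactly what permits the proof to split into the Lefschetz/Hopf argument for $C$ and the classical $q$-group orbit-counting for $Q$.
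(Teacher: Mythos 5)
The paper quotes this lemma directly from Aschbacher--Smith without reproducing the argument, so there is no internal proof to compare against. Your reconstruction is correct and is the standard one: the Hopf--Lefschetz trace argument (valid because the action on the order complex is regular, which you note) gives $\chi(Y^h)=1$ for every $h\in H$; cyclicity of $C=O^q(H)$ upgrades this to $\chi(Y^C)=1$; and the elementary $q$-group orbit count for $H/C$ acting on $\K(Y^C)$ transports the congruence to $Y^H$. The one step you take for granted is that cyclicity of $O^q(H)$ forces it to be a $q'$-group; this does hold (the Hall $q'$-part of $O^q(H)$ is characteristic in it, hence normal in $H$ with $q$-group quotient, so equals $O^q(H)$ by minimality), and it is exactly what justifies identifying $C$ with the Hall $q'$-subgroup and obtaining the decomposition $H=C\rtimes Q$.
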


The almost simple case of~(H-QC) is a consequence of the above lemma and the main theorems of~\cite{AK90}.
We summarize the results that we will need from \cite{AK90} in the theorem below.

\begin{theorem}[{Aschbacher-Kleidman}]
\label{theoremAK}
Let~$T$ be almost-simple. The following are equivalent:
\begin{enumerate}
\item For all hyperelementary nilpotent~$p'$-subgroups~$H$ of~$F^*(T)$, the fixed-point set~$\S_p(T)^H$ is not empty. 
\item $p = 2$, $F^*(T) = L_3(4)$ and~$4 \mid |T:F^*(T)|$.
\end{enumerate}
Moreover, (H-QC) holds for almost-simple groups.
\end{theorem}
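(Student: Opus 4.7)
The plan is to derive this theorem directly from the main results of Aschbacher--Kleidman~\cite{AK90}, with the generalized Robinson lemma (our Lemma~\ref{lemmaRobinson}) providing the bridge to the fixed-point formulation in~(1). First, recall that Quillen's fundamental homotopy equivalence gives $\A_p(T) \simeq \S_p(T)$, so proving (H-QC) for $T$ is equivalent to showing $\tilde{H}_*(\S_p(T)) \neq 0$. Thus the ``Moreover'' assertion reduces to showing, for each almost-simple $T$, that $\S_p(T)$ has non-vanishing reduced rational homology.

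The strategy for producing the non-vanishing homology is contrapositive. Suppose $\tilde{H}_*(\S_p(T)) = 0$. Then by Lemma~\ref{lemmaRobinson}, for every hyperelementary nilpotent group $H$ acting on $\S_p(T)$ the fixed-point subposet $\S_p(T)^H$ has reduced Euler characteristic divisible by the relevant prime, and in particular is non-empty. Taking $H \leq F^*(T)$ a hyperelementary nilpotent $p'$-subgroup, this gives condition~(1). So to establish both the equivalence and (H-QC), I would show that~(1) holds if and only if $T$ falls into the exceptional family~(2); equivalently, that for every almost-simple $T$ outside the family~(2) there exists a hyperelementary nilpotent $p'$-subgroup $H$ of $F^*(T)$ with $\S_p(T)^H = \emptyset$, and then handle~(2) by direct computation.

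The second step is the CFSG-based case analysis carried out in~\cite{AK90}: for each family of finite simple groups $L = F^*(T)$ (alternating, classical Lie-type, exceptional Lie-type, sporadic) and each allowed prime $p$, one exhibits a nilpotent $p'$-subgroup $H \leq L$ whose action on $\S_p(T)$ has no fixed points. Typically $H$ is chosen as an appropriate Cartan-like or diagonalisable $p'$-subgroup, or a suitable $q$-element for a prime $q \neq p$ whose centralizer in $T$ has trivial $p$-part; combined with a Sylow-transfer or Thompson-$A\times B$ style argument, this forces $\S_p(T)^H = \emptyset$. The residual almost-simple groups with $F^*(T) = L_3(4)$, $p = 2$ and $4 \mid |T:F^*(T)|$ resist this uniform approach, and for those Aschbacher--Kleidman verify (H-QC) by an explicit calculation of the homotopy type of $\A_2(T)$.

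The main obstacle is precisely the CFSG case analysis in the second step: one must exhibit an effective nilpotent $p'$-witness $H$ uniformly across an enormous variety of group-prime pairs, and the $L_3(4)$ outlier shows that no completely uniform recipe exists. For our purposes here, however, no new work is required --- the theorem is a restatement of the main results of~\cite{AK90}, combined with the generalized Robinson lemma~\ref{lemmaRobinson} to formalize the fixed-point reformulation in~(1).
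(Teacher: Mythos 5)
Your proposal is essentially correct and follows the paper's own treatment: the paper states Theorem~\ref{theoremAK} as a summary of results from~\cite{AK90} with no proof, and your outline of the Robinson-lemma-plus-CFSG-case-analysis strategy (exhibit a nilpotent $p'$-witness $H \leq F^*(T)$ with $\S_p(T)^H = \emptyset$ outside the exceptional family, handle the $L_3(4)$ cases by direct computation) is an accurate account of how \cite{AK90} proceeds. One caveat worth flagging: immediately after stating this theorem the paper points out a small error in the $L_3(4)$ computation of~\cite{AK90} (the relevant fixed-point set $\A_p(\Aut(L))^P$ has five points, not three) and repairs it in the proof of Theorem~\ref{thm:HomologyCalA}, so the ``Moreover'' clause is not quite the clean citation that your write-up presents it as.
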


Theorem~3 of~\cite{AK90} establishes~(H-QC) for almost-simple groups by using the equivalence of the above theorem.
However, the proof of Theorem~3 has a small gap---which can be easily fixed, as we show below:

\begin{theorem}
[{Homology for $\Imageposet{G}{L}$}]
\label{thm:HomologyCalA}
Suppose that~$L \leq G$ is a quasisimple subgroup with~$p'$-center, and let~$\Imageposet{G}{L}$ be the image poset.
Then~$\tilde{H}_*(\Imageposet{G}{L}) \neq 0$.
\end{theorem}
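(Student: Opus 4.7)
\emph{Proof plan.} The strategy will be to argue by contradiction, combining the generalized Robinson lemma (Lemma~\ref{lemmaRobinson}) with the Aschbacher-Kleidman dichotomy in Theorem~\ref{theoremAK}, and filling the small gap mentioned there for the almost-simple case.

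First, I would reduce to the case $Z(L) = 1$. Since $Z(L)$ is a $p'$-group, the quotient $L \to L/Z(L)$ induces a natural poset isomorphism on the relevant $\A_p$-posets and, via Lemma~\ref{lemmaPOuterPreimage}, also on $\Imageposet{G}{L}$; so I may assume $L$ is simple. Setting $T := \Aut_G(L) = N_G(L)/C_G(L)$, the group $T$ is almost-simple with socle $F^*(T) = L$, and by construction $\Imageposet{G}{L}$ embeds as a subposet of $\A_p(T)$ that still contains $\A_p(L)$. The conjugation action of $L$ on itself extends to actions on both $\A_p(T)$ and $\Imageposet{G}{L}$.

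Next, the main step. Assume for contradiction that $\tilde{H}_*(\Imageposet{G}{L}) = 0$. Then for any hyperelementary nilpotent $p'$-subgroup $H \leq L$, Lemma~\ref{lemmaRobinson} forces the fixed-point set $\Imageposet{G}{L}^H$ to be non-empty. Since $\Imageposet{G}{L} \subseteq \A_p(T)$, this also gives $\A_p(T)^H \neq \emptyset$ for every such $H$. By the equivalence in Theorem~\ref{theoremAK}, this is only possible in the exceptional configuration $p = 2$, $L \cong L_3(4)$, $4 \mid |T:L|$. Outside this exceptional case the contradiction is immediate, and the theorem follows.

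Finally, I will have to deal with the exceptional case $L \cong L_3(4)$ with $p = 2$ and $4 \mid |T:L|$ separately; this is precisely the small gap in the proof of Theorem~3 of~\cite{AK90}. Here the Robinson approach fails, since all relevant fixed-point sets are non-empty by the equivalence in Theorem~\ref{theoremAK}, so I will instead exhibit a non-trivial homology class directly. The plan is to show that the inclusion $\A_p(L) \hookrightarrow \Imageposet{G}{L}$ is non-zero in homology: by Quillen's original computations $\A_p(L) = \A_2(L_3(4))$ has non-trivial reduced rational homology, and survival of this homology in $\Imageposet{G}{L}$ can be verified by a direct analysis of the $2$-local subgroup structure of extensions of $L_3(4)$ by outer automorphisms of order $4$, paralleling the explicit $L_3(4)$ case-analysis already present in~\cite{AK90}. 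This exceptional case is the principal obstacle: in the generic case the argument is a clean application of Robinson plus Aschbacher-Kleidman, whereas the $L_3(4)$ configuration requires an explicit construction of a non-vanishing cycle.
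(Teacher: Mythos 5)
Your main-case argument is sound, and it runs in essentially the same groove as the paper: reduce to $Z(L)=1$, then combine Lemma~\ref{lemmaRobinson} with the dichotomy in Theorem~\ref{theoremAK} to isolate the exceptional configuration $p=2$, $L\cong L_3(4)$, $4 \mid |T:L|$. (The paper takes~$T := \gen{\Imageposet{G}{L}}$ rather than all of~$\Aut_G(L)$; your choice is larger and hence lands you in the exceptional case at least as often, which costs nothing, so this difference is harmless.)

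However, your handling of the exceptional $L_3(4)$ case has a genuine gap, rooted in a misreading of Lemma~\ref{lemmaRobinson}. You assert that ``the Robinson approach fails, since all relevant fixed-point sets are non-empty,'' and then propose a vaguer alternative (showing that $\A_p(L)\hookrightarrow \Imageposet{G}{L}$ survives in homology by ``direct analysis of the $2$-local subgroup structure''). But Robinson's lemma does not merely conclude that fixed-point sets are non-empty: it gives the stronger congruence $\tilde{\chi}(Y^H) \equiv 0 \pmod q$ whenever $Y$ is acyclic and $H$ is $q$-hyperelementary. Non-emptiness is only the ``In particular.'' This stronger form is precisely what rescues the exceptional case, and it is what the paper uses: take $P\in\Syl_5(L)$ cyclic of order~$5$. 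One checks (correcting a miscount of fixed points in~\cite{AK90}) that for any $P$-stable subposet $Y$ with $\A_2(L)\subseteq Y\subseteq \A_2\bigl(\Aut(L)\bigr)$ --- in particular for $Y=\Imageposet{G}{L}$ --- the fixed-point poset $Y^P$ is \emph{discrete} with between $2$ and $5$ points, hence $\tilde{\chi}(Y^P)\in\{1,2,3,4\}$ is nonzero mod~$5$, and Lemma~\ref{lemmaRobinson} immediately forces $\tilde{H}_*(Y)\neq 0$. So Robinson's lemma does not fail here; you simply need to use its full strength, applied to a well-chosen subgroup $P$.

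A further caution on your fallback plan: you propose to parallel ``the explicit $L_3(4)$ case-analysis already present in~\cite{AK90},'' but the point of the paper's treatment of this case is precisely that that analysis in~\cite{AK90} contains a small error (it reports three fixed points of~$P$ in $\A_2(\Aut(L))$ where there are five). Paralleling it without the correction would reproduce the gap rather than close it. And your proposed goal --- showing the inclusion $\A_p(L)\hookrightarrow\Imageposet{G}{L}$ is nonzero in homology --- is strictly stronger than what the theorem requires and is left entirely unsubstantiated; the Euler-characteristic-mod-$5$ argument above is both shorter and actually yields the statement.
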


\begin{proof}
Without loss of generality, we can assume that~$Z(L) = 1$.
Also write~$\A := \Imageposet{G}{L}$.
Recall from Lemma~\ref{lemmaPOuterPreimage} that~$\A_p(L)\subseteq \A$.

Assume first that part~(2) of Theorem~\ref{theoremAK} does not hold for $T := \gen{\A} \leq \Aut_G(L)$.
Then there exists a $q$-hyperelementary $p'$-subgroup $H\leq L$ such that $\S_p(T)^H$ is the empty set.
Note that $H$ acts on $\A$, and in particular, $\A^H \subseteq \S_p(T)^H$ is empty.
By Lemma \ref{lemmaRobinson}, $\tilde{H}_*(\A) \neq 0$.

Thus we may assume instead that part~(2) of Theorem~\ref{theoremAK} does hold; that is, $L = L_3(4)$ and~$4 \mid |T:L|$.
In this case, following the original proof of the almost-simple case on p211 of~\cite{AK90},
we take~$P \in \Syl_5(L)$, which is cyclic of order $5$.
In the original proof of Aschbacher-Kleidman, it is stated that~$\A_p \bigl( \Aut(L) \bigr)^P$ has exactly three points:
a subgroup of order~$2$, and two subgroups of order~$16$.
But this is not correct, since we have in fact five points in~$\A_p(\Aut(L))^P$.
Namely, $\A_p(L)^P$ is discrete with two points (two subgroups of order~$16$),
and $\A_p \bigl( \Aut(L) \bigr)^P$ is discrete with five points (two subgroups of order~$16$, and three subgroups of order~$2$).
In any case, if a subposet~$Y \subseteq \A_p \bigl( \Aut(L) \bigr)$ containing~$\A_p(L)$ is stable under the action of~$P$,
then~$Y^P$ is a discrete poset of cardinality between~$2$ and~$5$.
Therefore, $\tilde{\chi}(Y^P)$ is between $1$ and $4$ mod $5$, and in particular is nonzero.
By Lemma~\ref{lemmaRobinson}, $\tilde{H}_*(Y) \neq 0$.
In particular, this holds if~$Y = \A$, since $P \leq L$ and $L$ acts on $\A$;
and also holds if~$Y = \A_p(T)$ for some almost-simple group~$T$ with~$F^*(T) = L$.
This fixes the small gap in~\cite{AK90}, for showing nonzero homology of~$\A_p(T)$; and also finishes our proof for~$\A$.
\end{proof}

\begin{remark}
\label{remarkStrongHomologyCalA}
The above theorem can be strengthened (details will appear elsewhere)
to show that~$\Imageposet{G}{L}$ in fact satisfies the Lefschetz-character version of Quillen's conjecture.
That is, we have the stronger conclusion that~$\tilde{\chi}(\Imageposet{G}{L}^g) \neq 0$ for some~$g \in L$.
\donerk
\end{remark}

\begin{remark}
Lemma~\ref{lemmaPOuterPreimage} and Theorem~\ref{thm:HomologyCalA} provide some properties for the image poset~$\Imageposet{G}{L}$,
for~$L \leq G$ quasisimple with~$p'$-center, which suggest that it behaves much like an~$\A_p$-poset.
Moreover, in view of Lemma~\ref{lemmaPOuterPreimage},
the image poset~$\Imageposet{G}{L}$ can be easily described
in terms of the~$p$-outer poset~$\Outposet_G(L) = \A_p \bigl( N_G(L) \bigr) - \N_G \bigl( L C_G(L) \bigr)$.
On the other hand, the potentially-larger poset~$\A_p \bigl( \Aut_G(L) \bigr)$
cannot necessarily be described in terms of the elements of~$\A_p \bigl( N_G(L) \bigr)$:
since in general the map $\pi : \A_p \bigl( N_G(L) \bigr) \to \A_p \bigl( \Aut_G(L) \bigr) \cup \{ 1 \}$ is not surjective.
That is, in some situations, taking quotient by~$C_G(L)$ might be generating new~$p$-outers in~$L$,
which were not contained in~$N_G(L)$ before---as the example below shows.

The above remarks provide motivation for using the image poset~$\Imageposet{G}{L}$:
indeed we have given above a description of its elements---in terms of the~$p$-outers that are already contained in~$G$.
Moreover, Theorem~\ref{theoremNormalCaseReduction} shows that in fact we can reduce our~(H-QC) analysis
to studying this poset, and the behavior of~$\A_p(L) \to \Imageposet{G}{L}$ in homology.
Even more, in some situations it can be shown that there is a topological section,
and hence an inclusion of~$\Imageposet{G}{L}$ into $\A_p \bigl( N_G(L) \bigr)$---though we won't require such a section in our arguments.
\donerk
\end{remark}

\begin{example}
Let~$L$ be a simple group of Lie type over the field of~$q^{p^2}$ elements.
Let~$A \leq \Aut(L)$ be a cyclic group of field automorphisms of order~$p^2$ of the field $q^{p^2}$, and let~$a$ generate~$A$.
We use~$a$ to define different actions on two distinct components isomorphic to~$L$;
take two copies~$L_1$ and~$L_2$ of~$L$, and let~$B := \langle b \rangle$ be defined via the morphisms:
  \[\phi_1 : b \mapsto a^p \in \Aut(L_1) , \]
  \[\phi_2 : b \mapsto a \in \Aut(L_2) . \]
Consider the semidirect product~$G := (L_1 \times L_2) \rtimes_{\phi_1 \times \phi_2} B$.
We have that~$C_G(L_1) = L_2 \langle b^p \rangle$.
Thus cosets which have order~$p$ in the quotient~$B/L_1$ have elements inducing only inner automorphisms of~$L_1$;
so that elements of~$B$ which induce a nontrivial outer automorphism on~$L_1$ 
must lie in cosets which have order~$p^2$ in the quotient,
and so those elements have order~$p^2$ (or more) rather than~$p$, as elements of~$G$.
It follows then that~$\Outposet_G(L_1) = \emptyset$; and hence~$\Imageposet{G}{L_1} = \A_p(L_1)$.
On the other hand, we have~$\Aut_G(L_1) = G/C_G(L_1) \groupiso L_1 (B / \langle b^p \rangle) \groupiso L_1 \langle a^p\rangle$;
so we see that ~$\Imageposet{G}{L_1} = \A_p(L_1) \subsetneq \A_p \bigl( \Aut_G(L_1) \bigr)$.

Thus the~$p$-outers of~$L_1$ in~$G$ do not suffice to exhibit~$\A_p \bigl( \Aut_G(L_1) \bigr)$.
Moreover, $\Imageposet{G}{L_1}=\A_p(L_1)$ and $\A_p(\Aut_G(L_1))=\A_p(L_1\langle a^p\rangle)$ are not homotopy equivalent in general.
\donerk
\end{example}

We close this section with a useful proposition that gives a sufficient condition---in terms of 
the centralizers of whatever~$p$-outers of~$L$ do arise in~$G$, in its hypothesis (2)---to show
that the map~$\A_p(L) \to \Imageposet{G}{L}$ is nonzero in homology.
Thus it establishes~(H-QC) for~$G$ via Corollary~\ref{cor:genTheorem2Segev}, with~$\Imageposet{G}{L}$ in the role of~``$\A$'':

\begin{proposition}
\label{prop:TrivialInclusionCentralizer}
Suppose that~$G$ satisfies~(H1).
Let~$L$ be a component of~$G$ satisfying:
\begin{enumerate}
\item $\Outposet_G(L)$ is either empty, or it consists only of cyclic~$p$-outers. \\
      Furthermore, there exists~$k \geq 0$ such that:
\item the induced map~$\tilde{H}_k(\ \A_p \bigl( C_L(E) \bigr)\ ) \to \tilde{H}_k \bigl( \A_p(L) \bigr)$
      is the zero map for all~$E \in \Outposet_G(L)$; and
\item $\tilde{H}_k \bigl( \A_p(L) \bigr) \neq 0$.
\end{enumerate}
Then~$G$ satisfies~(H-QC).
\end{proposition}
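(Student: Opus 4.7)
The plan is to apply Corollary~\ref{cor:genTheorem2Segev} with~$\A := \Imageposet{G}{L}$: since~$G$ satisfies~(H1), it suffices to show that the natural map~$\A_p(L) \to \Imageposet{G}{L}$ is nonzero on~$\tilde{H}_k$. If~$\Outposet_G(L) = \emptyset$, then Lemma~\ref{lemmaPOuterPreimage} gives~$\Imageposet{G}{L} = \A_p(L)$, so the map is the identity and~(3) finishes the proof. So I may assume~$\Outposet_G(L)$ is nonempty; let~$E_1,\dots,E_n \in \Outposet_G(L)$ enumerate its elements, and write~$\bar{A}_j := \pi(E_j) \in \Outposet_{\A}(L)$. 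Setting~$\N := \N_{\A}(L)$, I factor the map as~$\A_p(L) \hookrightarrow \N \hookrightarrow \A$, where the first inclusion is a homotopy equivalence via the standard retraction~$\bar{E} \mapsto \bar{E} \cap \bar{L}$ (compare Remark~\ref{remarkPropertiesOfCalN}); thus it suffices to show~$\N \hookrightarrow \A$ is nonzero on~$\tilde{H}_k$.

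For each~$j$ set~$U_j := \A_{\geq \bar{A}_j}$; this poset has minimum~$\bar{A}_j$, so it is contractible. Under the cyclic-only hypothesis (Definition~\ref{defn:admitonlycyclic}), every~$\bar{E} \in \A$ either has nontrivial inner part (and then lies in~$\N$) or is purely outer of order~$p$ (and then equals some~$\bar{A}_j$); thus~$\A = \N \cup \bigcup_j U_j$. The same hypothesis gives~$\A_{>\bar{A}_j} = \N_{>\bar{A}_j}$; moreover, any~$\bar{E} > \bar{A}_j$ splits canonically as~$\bar{A}_j \times (\bar{E} \cap \bar{L})$, with~$\bar{E} \cap \bar{L}$ a nontrivial elementary abelian~$p$-subgroup of~$C_{\bar{L}}(\bar{A}_j)$; and since~$Z(L)$ is a~$p'$-group one checks~$\A_p\bigl(C_{\bar{L}}(\bar{A}_j)\bigr) \cong \A_p\bigl(C_L(E_j)\bigr)$. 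So the correspondence~$\bar{E} \mapsto \bar{E} \cap \bar{L}$ yields a poset isomorphism~$\A_{>\bar{A}_j} \cong \A_p\bigl(C_L(E_j)\bigr)$ under which the composition~$\A_{>\bar{A}_j} \hookrightarrow \N \simeq \A_p(L)$ becomes exactly the natural inclusion~$\A_p\bigl(C_L(E_j)\bigr) \hookrightarrow \A_p(L)$ appearing in hypothesis~(2).

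Now I build the filtration~$\A_0 := \N$, $\A_{j+1} := \A_j \cup U_{j+1}$, terminating at~$\A_n = \A$. Because~$\bar{A}_{j+1}$ is distinct from~$\bar{A}_1,\dots,\bar{A}_j$---and cyclic-only forbids~$\bar{A}_{j+1} > \bar{A}_i$---one has~$\bar{A}_{j+1} \notin \A_j$, hence~$\A_j \cap U_{j+1} = \A_{>\bar{A}_{j+1}} \subseteq \N \subseteq \A_j$. The degree-$k$ part of the Mayer--Vietoris sequence reads
\[
\tilde{H}_k\bigl(\A_{>\bar{A}_{j+1}}\bigr) \xrightarrow{\alpha_j} \tilde{H}_k(\A_j) \oplus \tilde{H}_k(U_{j+1}) \to \tilde{H}_k(\A_{j+1}),
\]
with~$\tilde{H}_k(U_{j+1}) = 0$ by contractibility. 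The~$\A_j$-component of~$\alpha_j$ factors through~$\tilde{H}_k(\N) \cong \tilde{H}_k\bigl(\A_p(L)\bigr)$, and the previous paragraph identifies the first arrow of that factorization with~$\tilde{H}_k\bigl(\A_p(C_L(E_{j+1}))\bigr) \to \tilde{H}_k\bigl(\A_p(L)\bigr)$---which vanishes by~(2). Hence~$\tilde{H}_k(\A_j) \to \tilde{H}_k(\A_{j+1})$ is injective at each step, so the composite~$\tilde{H}_k\bigl(\A_p(L)\bigr) \cong \tilde{H}_k(\A_0) \to \tilde{H}_k(\A)$ is injective; by~(3) it is nonzero, and Corollary~\ref{cor:genTheorem2Segev} then delivers~(H-QC).

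The main obstacle I anticipate lies in the structural analysis of the second paragraph: namely verifying carefully that cyclic-only really does force every~$\bar{E} > \bar{A}_j$ to split as~$\bar{A}_j \times (\bar{E} \cap \bar{L})$ with nontrivial inner part, and that two distinct purely outer~$\bar{A}_i, \bar{A}_j$ cannot generate an elementary abelian~$p$-subgroup of~$\A$ beyond those inner elements already in~$\N$. Once this rigidity is in place, the Mayer--Vietoris step propagates the vanishing of~(2) into injectivity at each stage of the filtration in a routine fashion.
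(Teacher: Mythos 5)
Your proposal is correct and follows essentially the same route as the paper: apply Corollary~\ref{cor:genTheorem2Segev} with $\A = \Imageposet{G}{L}$, reduce to the inclusion $\N_{\A}(L) \hookrightarrow \A$, then build up $\A$ from $\N$ by adjoining one contractible star $\A_{\geq \bar{A}_j}$ at a time and apply Mayer--Vietoris at each step, using the cyclic-only property to identify the overlap $\A_{>\bar{A}_j}$ with $\A_p\bigl(C_L(E_j)\bigr)$ and hypothesis~(2) to kill the connecting map in degree~$k$. The only cosmetic slip is that you enumerate $\Outposet_G(L)$ rather than $\Outposet_{\A}(L)$ (so distinct $E_j$'s may have equal images $\bar{A}_j$), but that does not affect the argument; and the ``obstacle'' you flag at the end---that cyclic-only forces $\bar{E} = \bar{A}_j \times (\bar{E}\cap\bar{L})$ for any $\bar{E} > \bar{A}_j$---is exactly the observation recorded in Definition~\ref{defn:admitonlycyclic}, and is a straightforward consequence of elementary abelian groups admitting complements.
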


\begin{proof}
As usual for~(H-QC) we may assume that~$O_p(G) = 1$.
By Theorem~\ref{generalReduction}, we can also assume that~$O_{p'}(G) = 1$.
Set~$\A := \Imageposet{G}{L}$ and~$\N := \N_{\A}(L)$;
We will apply Corollary~\ref{cor:genTheorem2Segev},
by establishing its hypothesis that~$\A_p(L) \to \A$ is not the zero map in homology.

As in Lemma~\ref{lemmaPOuterPreimage} we have:
  $$ \A = \bigcup_{E \in \hat{\Outposet}_G(L)}\ \A_p(L \overline{E}) \subseteq \A_p \bigl( \Aut_G(L) \bigr) . $$
Recall also that $\Outposet_\A(L) = \{ \overline{E} \tq E \in \Outposet_G(L) \}$.
Applying hypothesis~(1) to such~$E$, we get the analogue for the resulting~$\overline{E}$:
namely we see that~$\Outposet_{\A}(L)$ is either empty, or it consists only of cyclic~$p$-outers.
Now since the remainder of our argument takes place entirely in~$\A$,

\centerline{
  {\em below we simplify notation by writing just~$E$ (rather than~$\overline{E}$) for members of~$\A$\/}. 
            }

\noindent
Recall from Definition~\ref{defn:admitonlycyclic}
that since~$\A \subseteq \Aut(L)$, we have~~$\A - \N = \Outposet_{\A}(L)$.

\bigskip

Assume first that~$\Outposet_{\A}(L)$ is empty.
Then~$\A = \N = \A_p(L)$, so that~$\A_p(L) \to \A$ is the identity map; 
and the homology of~$\A_p(L)$ is nonzero using hypothesis~(3);
so by Corollary~\ref{cor:genTheorem2Segev} we get our conclusion of~(H-QC).
(Recall this special case $\A_p(L) = \A$ gives the final statement in Theorem~\ref{theorem2}.)

\bigskip

So we now assume instead that~$\Outposet_{\A}(L)$ is nonempty; and we consider some member~$E$.
Recall (as in Remark~\ref{remarkPropertiesOfCalN})
that we have the homotopy equivalence~$\N_{> E} \simeq \A_p \bigl( C_L(E) \bigr)$,
via the usual retraction~$r : A \mapsto A \cap L$, with homotopy inverse~$A \mapsto AE$;
with a similar equivalence more generally for~$\N \simeq \A_p(L)$. 
We will informally use~$i$ to denote these homotopy inverses.

Let~$s$ denote the number of choices for~$E$; that is, $\Outposet_{\A}(L) = \{ E_1 ,\ldots , E_s \}$. 
Since these give the members of~$\A - \N$, 
we can proceed by building up from~$\N$ to~$\A$---via ``adding one~$E_i$ at a time''. 
Actually it will be more convenient, for use in our Mayer-Vietoris sequence below,
to in fact adjoin each time the larger set~$\A_{\geq E_i}$: 
though in fact only the bottom member~$E_i$ is really being {\em newly\/} added at that point---since 
in view of our cyclic-only hypothesis~(1), from Definition~\ref{defn:admitonlycyclic}
we have~$\A_{> E_i} \subseteq \N$.
Namely we set~$X_0 := \N$, and define the general term inductively via:

\centerline{
    $X_{i+1} := X_i \cup \A_{ \geq E_{i+1} }$ ;
            }

\noindent
in particular, we get~$X_s = \A$.  
Furthermore using Definition~\ref{defn:admitonlycyclic} as just indicated, we get:

\centerline{
    $\A_{ \geq E_{i+1} } \cap X_i = ( \{ E_{i+1}\} \cup \A_{> E_{i+1} } ) \cap X_i = \N_{> E_{i+1} } \cap X_i
                                                                                   = \N_{> E_{i+1} } $ .
            }

\noindent
Now consider the Mayer-Vietoris exact sequence corresponding to the above decomposition of~$X_{i+1}$. 
Since the added-in posets $\A_{\geq E_i}$ are contractible, they give zero-terms in reduced homology;
hence the sequence in degree~$k$ takes the form:
  \[ \xymatrix{
        \ldots \ar[r] & \tilde{H}_{k+1}(X_{i+1}) \ar[r] & \tilde{H}_k( \N_{ > E_{i+1} } ) \ar[r] & \tilde{H}_k(X_i) \ar[r]
                                            	        & \tilde{H}_{k}(X_{i+1}) \ar[r]         & \ldots \\
                      & & \tilde{H}_k(\ \A_p(C_L(E_{i+1}) )\ ) \ar[u]^{\groupiso}_{i_*} \ar[r]_{\phantom{0}\ \; \; =0}
					 & \tilde{H}_k \bigl( \A_p(L) \bigr) \ar[u]
               }
  \]
where the map in the lower line is zero in view of hypothesis~(2).
That determines by composition a zero map into~$\tilde{H}_k(X_i)$---which by commutativity
gives zero for the other composite map into~$\tilde{H}_k(X_i)$.
In view of the isomorphism~$i_*$ induced by the homotopy inverse~$i$ mentioned earlier,
we conclude that~$\tilde{H}_k( \N_{> E_{i+1} } ) \to \tilde{H}_k(X_i)$ is also zero.
Exactness in the sequence then gives a monomorphism:
  \[ \tilde{H}_k(X_i) \hookrightarrow \tilde{H}_{k}(X_{i+1}) . \]
Composing these maps over all~$i$ then gives a monomorphism:
  \[ \tilde{H}_k \bigl( \A_p(L) \bigr) \overset{\groupiso}{\to} \tilde{H}_k \bigl( \N_\A(L) \bigr)
							   = \tilde{H}_k(\N) = \tilde{H}_k(X_0)
                                                           \hookrightarrow \tilde{H}_k(X_s) = \tilde{H}_k(\A) ,
  \]
where the isomorphism is induced by the more general homotopy inverse~$i$ mentioned above.
Note that this map is induced by the inclusion~$\A_p(L) \to \A$,
and it is nonzero if~$\tilde{H}_k \bigl( \A_p(L) \bigr) \neq 0$---which holds here by hypothesis~(3).
By Corollary~\ref{cor:genTheorem2Segev}, we conclude that~$\tilde{H}_* \bigl( \A_p(G) \bigr) \neq 0$---as required for~(H-QC).
\end{proof}

\begin{remark}
\label{rk:suffget(2)forakin(1)}
Hypothesis~(1) of Proposition~\ref{prop:TrivialInclusionCentralizer} may hold reasonably often:
for example, when the~$p$-Sylows of~$\Out(L)$ are cyclic, as we observed in Definition~\ref{defn:admitonlycyclic}.
And notice that hypothesis~(3) {\em always\/} holds---for at least some value of~$k$, in view of Theorem~\ref{thm:HomologyCalA}.
So it would then suffice to verify hypothesis~(2)
just for any of the values of~$k$ that already do satisfy hypothesis~(3).
\donerk
\end{remark}

\section{Elimination of components of sporadic type~\texorpdfstring{$\HS$}{HS} (for the prime 2)}

In this section, we prove part~(2) of Corollary~\ref{corollaryComponents}:
That is, we show that under~(H1), if~$G$ has a component isomorphic to~$\HS$ (the Higman-Sims sporadic group),
then~$G$ satisfies (H-QC).
Recall that~$\Out(\HS)$ is of order~$2$:
thus for odd~$p$, we have~$\A_p(\HS) = \A_p \bigl( \Aut(\HS) \bigr)$,
so that we can immediately apply Corollary~\ref{cor:genTheorem2Segev} to get~(H-QC);
hence in this section, we assume instead that~$p = 2$. 

Here we will still proceed via Corollary~\ref{cor:genTheorem2Segev}:
we will prove that~$\A_2(\HS) \to \A_2 \bigl( \Aut(\HS) \bigr)$ is not the zero map in homology.
To that end, we will inspect the second homology groups of these posets---via examining their Euler characteristics,
and using the structure of the centralizers of the~$2$-outers.
We performed some of the relevant computations in~GAP~\cite{GAP} with the package~\cite{Posets}.

Thus the main result of this section is the following theorem:

\begin{theorem}
\label{theoremHS}
Take~$p := 2$.
Let~$L := \HS$ denote the Higman-Sims sporadic group, and also take~$A := \Aut(\HS)$.
Then we have~$\tilde{\chi} \bigl( \A_2(L) \bigr) = 1767424$ and~$\tilde{\chi} \bigl( \A_2(A) \bigr) = 1204224$;
and the following hold:
\begin{enumerate}
\item $m_2(L) = 4$ and~$m_2(A) = 5$.
\item $\A_2(L) \to \A_2(A)$ is a~$2$-equivalence.
\item $\tilde{H}_n \bigl( \A_2(A) \bigr) = 0$ for~$n\geq 4$.
\item $\tilde{H}_3 \bigl( \A_2(L) \bigr) \subseteq \tilde{H}_3 \bigl( \A_2(A) \bigr)$.
\end{enumerate}
In particular, if~$G$ satisfies~(H1) and~$L$ is a component of~$G$, then~$G$ satisfies~(H-QC) for~$p = 2$.
\end{theorem}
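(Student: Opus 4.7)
The plan is to apply Corollary~\ref{cor:genTheorem2Segev} to~$G$ with~$\A := \A_2(\Aut(L)) = \A_2(A)$, which under~(H1) reduces~(H-QC) for~$G$ to showing that the inclusion~$\iota : \A_2(L) \hookrightarrow \A_2(A)$ is not the zero map in rational homology. I would deduce this nonvanishing from parts~(1)--(4) and the Euler-characteristic values~$1767424$ and~$1204224$ by a short contradiction argument.

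First I note that the reduced homology of~$\A_2(L)$ is supported in degrees~$0,1,2,3$ by the dimension bound~$m_2(L)-1=3$ from~(1), and that of~$\A_2(A)$ is supported in the same range by the dimension bound~$m_2(A)-1=4$ together with the top-vanishing~(3). Suppose for contradiction that~$\iota_*$ is the zero map in every degree. By the 2-equivalence~(2), $\iota_*$ is an isomorphism on~$\tilde H_0$ and~$H_1$ and a surjection on~$H_2$, so simultaneous zero-ness forces
\[
\tilde H_0(\A_2(L)) = H_1(\A_2(L)) = 0, \qquad H_2(\A_2(A)) = 0.
\]
By~(4), $\iota_*$ is injective on~$H_3$, so~$H_3(\A_2(L)) = 0$ as well. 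The reduced Euler characteristics then collapse to
\[
\tilde\chi(\A_2(L)) \,=\, \dim_{\QQ} H_2(\A_2(L)) \qquad \text{and} \qquad \tilde\chi(\A_2(A)) \,=\, -\dim_{\QQ} H_3(\A_2(A)).
\]
The first is consistent with the computed value~$1767424$, but the second forces~$-\dim_{\QQ} H_3(\A_2(A)) = 1204224 > 0$, which is absurd. Hence~$\iota_*$ is nonzero in some degree, and Corollary~\ref{cor:genTheorem2Segev} delivers~(H-QC) for~$G$.

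The real work, of course, sits in parts~(1)--(4) and in the Euler-characteristic computation themselves, and the main obstacles are items~(2) and~(4). For the 2-equivalence~(2) I would invoke Quillen's fiber lemma (Proposition~\ref{variantQuillenFiber}) applied to~$\iota$: for~$y \in \A_2(A)$ with~$y \leq L$ the fiber~$\iota^{-1}(\A_2(A)_{\leq y})$ has~$y$ as a maximum so the associated join is contractible, and the genuine issue concentrates on 2-outers~$y = \langle a\rangle$ of order~$2$ with~$a \in A-L$, where the fiber is empty and one must verify that~$\A_2(A)_{>\langle a\rangle} \simeq \A_2\bigl(C_A(a)/\langle a\rangle\bigr)$ is 1-connected. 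This reduces to a finite case-analysis based on the conjugacy classes of outer involutions in~$A-L$ and the structure of their centralizers, which is accessible from the known subgroup lattices of~$\HS$ and~$\Aut(\HS)$. For~(4), I would filter~$\A_2(A)$ by adjoining the 2-outer classes one at a time (in the spirit of Proposition~\ref{prop:TrivialInclusionCentralizer} and the Mayer--Vietoris analysis of Section~\ref{sectionDiscussionSegev}) and propagate injectivity up to~$H_3$ using the vanishing~(3) and the centralizer-connectivity estimates coming out of the proof of~(2). The numerical Euler characteristics themselves, finally, admit direct GAP computation via the \texttt{Posets} package~\cite{GAP,Posets}.
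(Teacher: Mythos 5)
Your proposal follows essentially the same route as the paper: the same reduction via Corollary~\ref{cor:genTheorem2Segev} to nonvanishing of~$\iota_*$, the same use of Quillen's fiber lemma on the posets~$\A_2(A)_{>\langle a\rangle}\simeq\A_2(C_L(a))$ together with the structure of the $2C$- and $2D$-centralizers, the same Mayer--Vietoris filtration for parts~(3) and~(4), and the same GAP computation of the Euler characteristics. The only genuine difference is in the final deduction: you argue by contradiction and invoke~(4) to kill~$H_3(\A_2(L))$, but this step is superfluous, since once~$\tilde H_0(\A_2(A))=H_1(\A_2(A))=H_2(\A_2(A))=0$ and part~(3) give~$\tilde\chi(\A_2(A))=-\dim H_3(\A_2(A))\le 0$, the positivity~$\tilde\chi(\A_2(A))=1204224>0$ already contradicts; the paper instead argues directly that both posets have nonzero~$H_2$ (from~$\tilde\chi>0$ and vanishing in other even degrees) and then cites the epimorphism on~$H_2$ from~(2).
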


For the ``In particular" part of the above theorem:
Note that~$\Out(HS)$ is cyclic of order~$2$, so we have the cyclic-only situation---which
would give us hypothesis~(1) of Proposition~\ref{prop:TrivialInclusionCentralizer};
however, we do not get the zero-maps for centralizers which would be needed for hypothesis~(2) there.
Instead, we will show that we can still exploit a Mayer-Vietoris sequence
very similar to the one used in proving that Proposition---but now based just on the subset of un-cooperative centralizers; 
in order to then show that~$\A_2(\HS) \to \A_2 \bigl( \Aut(\HS) \bigr)$ is nonzero in homology,
and so again complete the proof of~(H-QC) as there, via Corollary~\ref{cor:genTheorem2Segev}.

Recall from Definition~\ref{definitionCalN} that if~$H \leq G$, then~$\N_G(H) = \{ B \in \A_p(G) \tq B \cap H \neq 1 \}$.
By Remark~\ref{remarkPropertiesOfCalN},
this poset is homotopy equivalent to~$\A_p(H)$ via the retraction~$r : \N_G(H) \to \A_p(H)$ defined by~$B \mapsto B \cap H$,
with inverse given by the inclusion~$\A_p(H) \hookrightarrow \N_G(H)$.
Moreover, we saw there that
if we have~$E \in \A_p(G) - \N_G(H)$, then~$\N_G(H)_{>E}$ is homotopy equivalent to~$\A_p \bigl( C_H(E) \bigr)$
via the same retraction~$r(B) = B \cap H$, but now with the inverse~$i : \A_p \bigl( C_H(E) \bigr) \to \N_G(H)$ given by~$i(B)= BE$.

Also recall the formula:
  \[ \tilde{\chi} \bigl( \A_p(G) \bigr) = \sum_{E \in \A_p(G) \cup \{ 1 \}}\ (-1)^{m_p(E)-1}p^{m_p(E)(m_p(E)-1)/2}\ ; \]
see for example~\cite{JM}.

\begin{proof}[Proof of Theorem~\ref{theoremHS}]
The values of the Euler characteristic in the statement follow from the above formula via direct computation---e.g.~in~GAP.

\bigskip

In the following argument for conclusions~(1) and~(2), we refer to Table~5.3m of~\cite{GLS98}
for the assertions on the structure of the subgroups of~$\Aut(\HS)$.
Since~$L$ has index~$2$ in~$A$, $\Outposet_A(L)$ consists only of cyclic~$2$-outers.
Indeed, if~$E\in \Outposet_A(L)$, then~$E$ is of type~$2C$ or~$2D$, in view of Table~5.3m.
We recall below the structure of the centralizers of these involutions:

The centralizer~$C_L(2C)$ is a non-split extension: given by a normal elementary abelian~$2$-group of~$2$-rank~$4$,
under action by~$O_4^-(2)$.
In particular, $O_2 \bigl( C_L(2C) \bigl) > 1$ and so~$\A_2 \bigl( C_L(2C) \bigr)$ is contractible.
So the inclusion of this centralizer in~$L$ is certainly zero in homology.
Conclusion~(1) follows from Table~5.6.1 of~\cite{GLS98} and the fact that~$m_2 \bigl( C_L(2C) 2C \bigr) = 5$.
(Indeed using the centralizer in the following paragraph, we also get~$m_2 \bigl( C_L(2D) 2D \bigr) = 5$.)

On the other hand, $C_L(2D) \groupiso \SS_8$, and~$\A_2(\SS_8)$ is homotopy equivalent to a wedge of~$512$ spheres of dimension~$2$:
This holds since~$\A_2(\SS_8)$ is simply connected,
while the Bouc poset~$\B_2(\SS_8)$ of non-trivial radical~$2$-subgroups has dimension~$2$,
and is homotopy equivalent to~$\A_2(\SS_8)$---where~$\tilde{\chi} \bigl( \A_2(\SS_8) \bigr) = 512$.
These assertions can be directly computed, or else checked with GAP and the package \cite{Posets}.
In particular, it is not clear that the inclusion of this centralizer in~$L$
should induce the zero map in homology---notably in dimension~$2$.
So the~$2D$-centralizers do not allow us to complete hypothesis~(2) for Proposition~\ref{prop:TrivialInclusionCentralizer}.

Write~$\A := \A_2(A)$, and~$\N := \N_{\A}(L)$.
Since we are working in~$A = \Aut(L)$, using Definition ~\ref{defn:admitonlycyclic} we have~$\A - \N = I_{\A}(L)$;
indeed we mentioned earlier that we have the cyclic-only there, 
so that~$I_{\A}(L)$ consists of groups of order~$p$, with no inclusion relations among them. 

Moreover, if~$E \in I_{\A}(L)$ and~$E$ is of type~$2C$,
then~$\N_{>E} \overset{r}{\simeq} \A_2 \bigl( C_L(2C) \bigr)$ (using the earlier retraction~$r$) is contractible.
Correspondingly we set~$J{_A}(L) := \{ E \in \A \tq E \text{ is of type } 2D \}$;
and~$\N_J := \N \cup \{ \A_{\geq E} : E \in J_{\A}(L) \} $.
Then we see that~$\N_J \hookrightarrow \A$ is a homotopy equivalence,
in view of Proposition~\ref{variantQuillenFiber}---since beyond~$\N_J$,
we are adding only~$\A_{\geq E}$ for~$E$ of type~$2C$---and this poset is~$E * \A_{> E}$, 
where~$\A_{> E}$ is contractible.
Further, the inclusion~$\N \hookrightarrow \N_J$ is a~$2$-equivalence---also via Proposition~\ref{variantQuillenFiber}:
for if~$E \in J_{\A}(L)$, then again using Definition~\ref{defn:admitonlycyclic}, and the earlier retraction~$r$:
  \[ \A_{> E} = \N_{>E} \overset{r}{\simeq} \A_2 \bigl( C_L(E) \bigr) \cong \A_2 \bigl( C_L(2D) \bigr) , \]
which we saw is a wedge of~$2$-spheres, and so is~$1$-connected.
Since~$\A_2(L) \hookrightarrow \N$ is a homotopy equivalence (cf.~Remark~\ref{remarkPropertiesOfCalN}),
we conclude that the composition~$\A_2(L) \subseteq \N \subseteq \N_J \subseteq \A = \A_2(A)$ of our three inclusions,
giving~$\A_2(L) \hookrightarrow \A_2(A)$, is a~$2$-equivalence---and hence we have established conclusion~(2).

\bigskip

For conclusions~(3) and~(4):
We saw that~$\N_J \simeq \A = \A_2(A)$;
and we obtained~$\N_J$ from~$\N$ by a process of adding the~$\A_{\geq E}$ for~$E$ of type~$2D$,
glued through the~$2$-spheres in the link~$\Lk_{\A}(E)$.
Therefore, the process does not change the homology groups of degree~$\geq 4$;
so we conclude that~$\tilde{H}_n \bigl( \A_2(A) \bigr) = \tilde{H}_n \bigl( \A_2(L) \bigr) = 0$ for all~$n \geq 4$.
We now examine that process in more detail---paralleling
the proof of Proposition~\ref{prop:TrivialInclusionCentralizer}:
Assume we have~$s$ members of type~$2D$; order them as~$J_{\A}(L) = \{ E_1 , \ldots , E_s \}$.
Again set~$X_0 := \N$, and:
\begin{equation}
\label{eqDecompositionXi+1}
   X_{i+1} := X_i \cup \A_{\geq E_{i+1}} ;
\end{equation}
in particular, we now get~$X_s = \N_J$.
Again we have:
  \[ \A_{\geq E_{i+1}} \cap X_i = \N_{>E_{i+1}}
                                \overset{r}{\simeq} \A_2 \bigl( C_L(E_{i+1}) \bigr ) \cong \A_2 \bigl( C_L(2D) \bigr) , \]
which we saw is a wedge of~$2$-spheres.
We apply the Mayer-Vietoris sequence to the decomposition of~$X_{i+1}$
given in the right side of equation~(\ref{eqDecompositionXi+1}).
Below we describe the relevant terms of this long exact sequence. 
Note since each $\N_{>E}$ is a wedge of~$2$-spheres that we get values of~$0$
in the corresponding places for its homology in dimensions~$\neq 2$:

  \[  0 \to \tilde{H}_n(X_i) \to \tilde{H}_n(X_{i+1}) \to 0, \qquad n \geq 4 \]

  \[ \xymatrix{
        0 \ar[r] & \tilde{H}_3(X_i) \ar[r] & \tilde{H}_3(X_{i+1}) \ar[r]
 	         & \tilde{H}_2( \N_{>E_{i+1}} ) \ar[r] & \tilde{H}_2(X_i) \ar[r] & \tilde{H}_2(X_{i+1})\ar[r] & 0 \\
        & &      & \tilde{H}_2 \bigl( C_L(E_{i+1}) \bigr) \ar[u]_{\groupiso}^{i_*} \ar[r] & \tilde{H}_2(\A_2(L)) \ar[u]
               }
   \]

  \[ 0 \to \tilde{H}_1(X_i) \to \tilde{H}_1(X_{i+1}) \to 0 \]

\noindent
Now for degrees~$n \geq 4$:
Exactness in the top line gives an isomorphism in homology for each pair~$(i,i+1)$;
and we compose these over~$i$ to get a homology isomorphism between~$X_0 = \N$ and~$X_s = \N_J$ in those degrees.
We compose these with our earlier equivalences, namely~$\A_2(L) \to \N$ along with~$\N_J \to \A = \A_2(A)$, 
so that we extend to a homology isomorphism between~$\A_2(L)$ and~$\A_2(A)$---establishing conclusion~(3).
Next for degree~$3$:
Exactness at the left of the middle sequence gives monomorphisms~$\tilde{H}_3(X_i) \to \tilde{H}_3(X_{i+1})$,
which we compose over~$i$ to get a monomorphism~$\tilde{H}_3(\N) \to \tilde{H}_3(\N_J)$;
and then composing with the same two earlier homotopy equivalences
gives our desired monomorphism~$\tilde{H}_3 \bigl( \A_2(L) \bigr) \subseteq \tilde{H}_3 \bigl( \A_2(A) \bigr)$,
establishing conclusion~(4).

\bigskip

Finally, we show that with the above computation, we can prove the ``In particular" part of the theorem;
we will apply Corollary~\ref{cor:genTheorem2Segev} with~$\A_2(A)$ in the role of~``$\A$'':
Namely we will show that the inclusion~$\A_2(L) \subseteq \A_2(A)$ is nonzero in homology.
Note that for the even degrees~$2m$ other than~$2$ itself, we get $\tilde{H}_{2m} \bigl( \A_2(L) \bigr) = 0 = \tilde{H}_{2m}(\A)$:
namely for~$2m = 0$ since both posets are connected, and for~$2m \geq 4$ by conclusion~(3) above.
Since homology of odd degree apperas in the Euler characteristic with a negative sign,
while both~$\tilde{\chi}(\A_2(L))$ and $\tilde{\chi}(\A)$ are positive,
we conclude that both the homology groups~$\tilde{H}_2 \bigl( \A_2(L) \bigr)$ and~$\tilde{H}_2(\A)$ are nonzero.
So the epimorphism in degree~$2$ provided by the~$2$-equivalence in conclusion~(2) cannot be the zero map in homology. 
Then the proof finishes with Corollary~\ref{cor:genTheorem2Segev} as promised.
\end{proof}

\section{Elimination of certain alternating components (for the prime 2)}

In this section, we prove part~(3) of Corollary~\ref{corollaryComponents}. 
That is, we will apply the results of earlier sections---to establish~(H-QC)
when~$G$ has an alternating component of type~$\AA_6$ or~$\AA_8$, for~$p = 2$.

After that,
we will discuss how to generalize these results to arbitrary alternating groups $\AA_n$ arising as components of the group $G$.
We focus on the case~$p = 2$, since for odd $p$, $\Out(\AA_n)$ is a~$p'$-group
and hence we can establish~(H-QC) directly via Theorem~\ref{theorem2}.
Finally, we will discuss the case of~(H-QC) for a group~$G$ having exactly two components,
both isomorphic to~$\AA_5$ and interchanged in~$G$:
We show that~(H-QC) holds for this group, first by using our methods;
and then also investigating it via the viewpoint of Theorems~1--3 in~\cite{Segev}.
This comparison shows that the hypotheses of our theorems may be more widely applicable, and easier to check.

\bigskip

We begin below with a direct application of Proposition~\ref{prop:TrivialInclusionCentralizer} to some alternating components.
We refer to~\cite{GLS98} for the assertions on the centralizers in alternating groups.

\begin{corollary}
\label{coroA6}
Assume~$p = 2$.
Suppose that~$O_{2'}(G) = 1$, or that~$G$ satisfies~(H1).
If~$G$ has a component~$L$ isomorphic to~$\AA_6$, then~$G$ satisfies~(H-QC).
\end{corollary}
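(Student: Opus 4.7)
The plan is to apply Proposition~\ref{prop:TrivialInclusionCentralizer} to the component $L \cong \AA_6$, taking $k = 1$. First I would reduce to $O_2(G) = 1 = O_{2'}(G)$: the condition on $O_{2'}(G)$ is immediate when it is assumed outright, while under~(H1) it follows from Theorem~\ref{generalReduction} (otherwise (H-QC) already holds). Then I would use the standard identification $\Aut(\AA_6) \cong P\Gamma L_2(9)$, where $\Out(\AA_6) \cong C_2 \times C_2$, with the four cosets of $\AA_6$ in $\Aut(\AA_6)$ corresponding to the overgroups $\AA_6$, $\SS_6$, $M_{10}$, $\PGL_2(9)$.

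For hypothesis~(1) of Proposition~\ref{prop:TrivialInclusionCentralizer}, namely that $\Outposet_G(L)$ contains only cyclic elements: any $2$-outer $E$ of $L$ in $G$ injects into $N_G(L)/C_G(L) \leq \Aut(L)$ as an elementary abelian $2$-subgroup meeting $\AA_6$ trivially, and hence into $\Out(\AA_6) \cong C_2 \times C_2$. A rank-$2$ such $E$ would require three pairwise commuting involutions, one in each of the three non-trivial cosets of $\AA_6$ in $\Aut(\AA_6)$; but the $M_{10}$-coset contains only elements of orders $4$ and $8$, ruling this out. Hence every $2$-outer is cyclic. Hypothesis~(3), namely $\tilde{H}_1(\A_2(\AA_6)) \neq 0$, follows because $\AA_6$ has $2$-rank $2$, so $\A_2(\AA_6)$ is a connected $1$-dimensional order complex whose reduced Euler characteristic is easily computed to be nonzero.

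For hypothesis~(2) I would treat the two families of cyclic $2$-outers separately. If $E = \gen{t}$ with $t$ in the $\SS_6$-coset (a transposition or triple-transposition acting by conjugation), a direct centralizer computation gives $C_{\AA_6}(t) \cong \SS_4$; since $\SS_4$ contains a normal $C_2 \times C_2$, we have $O_2(\SS_4) \neq 1$, so $\A_2(\SS_4)$ is contractible by Quillen's criterion and the inclusion-induced map on $\tilde{H}_*$ is zero. If $t$ lies in the $\PGL_2(9)$-coset, then $t$ acts fixed-point-freely on $\mathbb{P}^1(\FF_9)$ and $C_{\PGL_2(9)}(t)$ is the non-split torus normalizer of order $20$, intersecting $\PSL_2(9) = \AA_6$ in $D_{10}$; this group has $2$-rank $1$ with five pairwise non-commuting involutions, so $\A_2(D_{10})$ is a discrete $5$-point poset with vanishing $\tilde{H}_1$. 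In both cases the map on $\tilde{H}_1$ is zero, and Proposition~\ref{prop:TrivialInclusionCentralizer} then yields~(H-QC). The main obstacle is hypothesis~(1): the coset-by-coset analysis of $\Aut(\AA_6)$, in particular verifying that the $M_{10}$-coset has no involutions—this is exactly what forces all $2$-outers of an $\AA_6$-component to be cyclic.
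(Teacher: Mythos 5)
Your proof is correct and follows essentially the same path as the paper: reduce to $O_2(G)=1=O_{2'}(G)$, then verify the three hypotheses of Proposition~\ref{prop:TrivialInclusionCentralizer} with $k=1$, using the fact that the $M_{10}$-coset of $\Aut(\AA_6)$ contains no involutions to get the cyclic-only condition, and the centralizers $\SS_4$ and $D_{10}$ (contractible, resp.\ $0$-dimensional) to kill $\tilde H_1$. The only cosmetic differences are that the paper identifies $\A_2(\AA_6)$ with the $Sp_4(2)$-building (a wedge of sixteen $1$-spheres) rather than invoking an Euler-characteristic computation, and phrases the $M_{10}$ obstruction in terms of the class $4C$; these are equivalent observations.
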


\begin{proof}
In view of Theorem~\ref{generalReduction}, we can assume that~$O_2(G) = 1 = O_{2'}(G)$.
We check the hypotheses of Proposition~\ref{prop:TrivialInclusionCentralizer}
for the component~$L$ of~$G$ isomorphic~to $\AA_6$.

Although the quotient group~$\Out(\AA_6)$ is elementary of rank~$2$, 
the automorphisms in one of the three nontrivial cosets have preimages in~$\Aut(\AA_6)$ 
which have order~$4$ (the class denoted~$4C$).
So if~$E \in \A_2 \bigl( \Aut(\AA_6) \bigr)$ satisfies~$E \cap \AA_6 = 1$, then~$|E| = 2$;
and hence~$\Outposet_{\Aut(\AA_6)}(\AA_6)$ contains only cyclic~$2$-outers---giving
hypothesis~(1) of Proposition~\ref{prop:TrivialInclusionCentralizer}:
Moreover, for the other outer automorphism groups~$E$ that do have elements of order~$2$ (the classes denoted~$2D$ and~$2B,2C$),
we see that~$C_{\AA_6}(E)$ is isomorphic to either~$D_{10}$ or~$\SS_4$.
In the former case, note~$\A_2(D_{10})$ is discrete with~$5$ points;
and in the latter case~$O_2(\SS_4) > 1$, so~$\A_2(\SS_4)$ is contractible.
To check hypotheses~(2) and~(3) of the Proposition \ref{prop:TrivialInclusionCentralizer}, it remains to find a value~$k \geq 0$,
such that~$\tilde{H}_k \bigl( \A_2(\AA_6) \bigr) \neq 0$,
while~$\A_2 \bigl( C_{\AA_6}(E) \bigr) \hookrightarrow \A_2(\AA_6)$
induces the zero map in the~$k$-th homology group, for all~$E \in I_{\Aut(\AA_6)}(\AA_6)$.
We take~$k := 1$:
For~$\A_2(\AA_6)$ is homotopy equivalent with the Tits building for~$Sp_4(2)$---a wedge of $16$~$1$-spheres,
so that~$\tilde{H}_1 \bigl( \A_2(\AA_6) \bigr) \neq 0$;
while the~$\A_2$-posets of the centralizers are of dimension~$0$ or contractible, 
and hence have zero homology in dimension~$1$---so that the induced maps in $\tilde{H}_1$ are indeed zero, as desired.

Thus we may apply Proposition~\ref{prop:TrivialInclusionCentralizer}, to conclude that~$\tilde{H}_* \bigl( \A_2(G) \bigr) \neq 0$.
\end{proof}

\begin{corollary}
\label{coroA8}
Assume~$p = 2$.
Suppose that $O_{2'}(G) = 1$, or that~$G$ satisfies~(H1).
If~$G$ has a component~$L$ isomorphic to~$\AA_8$, then~$G$ satisfies~(H-QC).
\end{corollary}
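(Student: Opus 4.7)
\textbf{The plan is to apply Proposition~\ref{prop:TrivialInclusionCentralizer} to $L \cong \AA_8$ with $k = 2$}, in close analogy with the proof of Corollary~\ref{coroA6}. First I would invoke Theorem~\ref{generalReduction} to reduce to the case $O_2(G) = 1 = O_{2'}(G)$; by Lemma~\ref{lemmaOpandp} this forces $L$ to be the simple group $\AA_8$ itself. Since $\Out(\AA_8)$ is cyclic of order~$2$, every element of $\Aut(\AA_8) = \SS_8$ that is purely outer on $\AA_8$ has order exactly~$2$, so $\Outposet_G(L)$ contains only cyclic $2$-outers, giving hypothesis~(1). For hypothesis~(3), I would use the exceptional isomorphism $\AA_8 \cong L_4(2)$: by Quillen's theorem for Chevalley groups in defining characteristic, $\A_2(\AA_8)$ is homotopy equivalent to the rank-$3$ Tits building of $L_4(2)$, which is a wedge of $2$-spheres, so $\tilde{H}_2(\A_2(\AA_8)) \neq 0$.

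For hypothesis~(2), I would first classify the odd involutions of $\SS_8$ up to conjugation. They fall into exactly two classes, of cycle types $1^6\,2^1$ (representative $t_1 = (1,2)$) and $1^2\,2^3$ (representative $t_3 = (1,2)(3,4)(5,6)$); these furnish representatives for the two conjugacy classes of cyclic $2$-outers of $\AA_8$ in $\Aut(\AA_8)$. A routine sign-computation in $\SS_8$ then identifies the centralizers in $\AA_8$: starting from $C_{\SS_8}(t_1) \cong C_2 \times \SS_6$, the index-$2$ even part projects isomorphically onto the $\SS_6$ factor, so $C_{\AA_8}(t_1) \cong \SS_6$; and starting from $C_{\SS_8}(t_3) \cong (C_2 \wr \SS_3) \times C_2$, the index-$2$ even part projects isomorphically onto $C_2 \wr \SS_3$, so $C_{\AA_8}(t_3) \cong C_2 \wr \SS_3$ of order~$48$.

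To finish hypothesis~(2) at $k = 2$, I would verify that $\tilde{H}_2(\A_2(C_{\AA_8}(t_i))) = 0$ for $i \in \{1,3\}$, so that the induced maps into $\tilde{H}_2(\A_2(\AA_8))$ are automatically zero. For $C_{\AA_8}(t_1) \cong \SS_6 \cong \mathrm{Sp}_4(2)$, Quillen's building theorem gives $\A_2(\SS_6)$ homotopy equivalent to the rank-$2$ Tits building of $\mathrm{Sp}_4(2)$, which is a graph (wedge of $1$-spheres), and hence $\tilde{H}_2$ vanishes. For $C_{\AA_8}(t_3) \cong C_2 \wr \SS_3$, the base subgroup $C_2^3$ is normal and nontrivial, so $O_2(C_2 \wr \SS_3) \neq 1$ and $\A_2(C_2 \wr \SS_3)$ is contractible; again $\tilde{H}_2 = 0$. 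With all three hypotheses of Proposition~\ref{prop:TrivialInclusionCentralizer} verified at $k = 2$, it yields $\tilde{H}_*(\A_2(G)) \neq 0$, establishing~(H-QC).

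The main obstacle, modest by the standards of this paper, is the identification $C_{\AA_8}(t_3) \cong C_2 \wr \SS_3$---from which the decisive property $O_2 \neq 1$ drops out immediately. Without that structural observation one would be forced to analyze $\tilde{H}_2(\A_2(C_2 \wr \SS_3))$ directly, which would require extra care; all other ingredients, namely the Chevalley isomorphisms $\AA_8 \cong L_4(2)$ and $\SS_6 \cong \mathrm{Sp}_4(2)$, Quillen's building theorem, and the centralizer-of-involution sign-computations in $\SS_8$, are entirely standard.
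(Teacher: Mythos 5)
Your proof takes essentially the same route as the paper's: apply Proposition~\ref{prop:TrivialInclusionCentralizer} at~$k = 2$, using~$\AA_8 \cong L_4(2)$ so that~$\A_2(\AA_8)$ is the building (a wedge of~$2$-spheres), and checking that the centralizer posets~$\A_2\bigl(C_{\AA_8}(E)\bigr)$ contribute nothing in degree~$2$.

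However, you are more careful than the published argument at one point, and your extra care matters. The paper's proof of this corollary asserts simply that ``for~$E \in \Aut(\AA_8)$ giving a~$2$-outer of~$\AA_8$, we have~$C_{\AA_8}(E) \cong \SS_6 \cong Sp_4(2)$,'' and verifies hypothesis~(2) only from that. But as you correctly observe, $\SS_8 = \Aut(\AA_8)$ has two conjugacy classes of odd involutions (cycle types~$2^1$ and~$2^3$), hence two classes of cyclic~$2$-outers of~$\AA_8$, with centralizers in~$\AA_8$ isomorphic to~$\SS_6$ and~$C_2 \wr \SS_3$ respectively; your sign computations identifying these are correct. The~$2^3$-class is silently omitted in the paper. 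Fortunately the omission is harmless: exactly as you argue, $O_2(C_2 \wr \SS_3) \geq C_2^3 > 1$, so~$\A_2\bigl(C_2 \wr \SS_3\bigr)$ is contractible and certainly has vanishing~$\tilde{H}_2$, so hypothesis~(2) holds for this class too. Your version thus completes a small gap in the paper's proof while otherwise following the same line; everything else in your write-up (the reduction via Theorem~\ref{generalReduction}, the cyclic-only check from~$|\Out(\AA_8)| = 2$, the building identifications) matches the paper.
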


\begin{proof}
We proceed in a similar way to the previous proof---namely
we check the hypotheses of Proposition~\ref{prop:TrivialInclusionCentralizer}:
Again we may assume that~$O_2(G) = 1 = O_{2'}(G)$.
This time~$\Out(\AA_8)$ has order~$2$, and so in particular has cyclic~$2$-Sylow subgroups;
so we are in the cyclic-only situation, giving hypothesis~(1).
For~$E \in \Aut(\AA_8)$ giving a~$2$-outer of~$\AA_8$, we have~$C_{\AA_8}(E) \groupiso \SS_6 \groupiso Sp_4(2)$.
Now~$\A_2 \bigl( Sp_4(2) \bigr)$ is in fact homotopy equivalent with~$\A_2(\AA_6)$, namely a wedge of~$16$~$1$-spheres, with homology concentrated in degree~$1$.
To verify hypotheses~(2) and~(3), we will choose the value~$k := 2$:
For~$\A_2(\AA_8)$ is homotopy equivalent to the building for $L_4(2)$, a wedge of~$64$~$2$-spheres, 
so that~$\tilde{H}_2 \bigl( \A_2(\AA_8) \bigr) \neq 0$.
On the other hand, we saw that~$\A_2 \bigl( C_{\AA_8}(E) \bigr)$ has homology concentrated in dimension~$1$,
and hence has zero homology in dimension~$2$,
so that~$\A_2 \bigl( C_{\AA_8}(E) \bigr) \hookrightarrow \A_2(\AA_8)$ induces the zero map in homology, as desired.

Thus we may apply Proposition~\ref{prop:TrivialInclusionCentralizer}, to conclude that~$\tilde{H}_* \bigl( \A_2(G) \bigr) \neq 0$.
\end{proof}

\noindent
We now discuss possible ways of generalizing the above results:

The proofs given above fail for~$\AA_n$ with odd~$n$.
For example, take~$n = 5$:
Then~$\A_2(\AA_5)$ has~$5$ connected components, each contractible; 
so that reduced homology is concentrated in~$\tilde{H}_0 \bigl( \A_2(\AA_5) \bigr)$, a space of dimension~$4$.
By contrast, $\A_2(\SS_5)$ is a connected wedge of~$1$-spheres---so that reduced homology is concentrated in degree~$1$. 
Therefore the inclusion~$\A_2(\AA_5) \hookrightarrow \A_2(\SS_5)$ induces the zero map in homology.
One of the reasons for this behavior in homology is
that the centralizers of outer~$2$-subgroups~$E \leq \SS_5$
have centralizer in~$\AA_5$ equal to~$\SS_3$, which is discrete (so of dimension~$0$) with~$3$ points.
That is, the centralizers have the same maximum nonzero homological dimension as~$\AA_5$.
A similar (but more complex) situation arises with~$\AA_7$ and~$\SS_7$.
The difficulty for odd~$n$ is that
we have a ``leap" of the largest dimension of a nonzero homology group from~$\AA_n$ to~$\SS_n$ (at least for small~$n$).
It would be interesting to study these behaviors for~$n \geq 9$ (both even and odd).

In that direction, we propose the problem below.
If~$r \geq 0$ is a real number, denote by~$[r]$ the largest integer~$n$ with~$n\leq r$ (the floor function). 

\vspace{0.3cm}

\textbf{Problem.}
Let~$m_a := [n/2]-2$ for~$n \geq 5$, and~$m_s := [(n+1)/2]-2$ for~$n \neq 2,4$.
Show that~$m_a$ (resp. $m_s$)~is the largest integer
such that~$\A_2(\AA_n)$~(resp. $\A_2(\SS_n)$) has nonzero homology in degree~$m_a$ (resp.~$m_s$).

\vspace{0.3cm}

Suppose that the statements in the above Problem  have been established.
Since~$C_{\AA_n}(E)\groupiso \SS_{n-2}$ is the centralizer of a particular outer involution of~$\AA_n$,
we see that~$\A_2(C_{\AA_n}(E))$ and~$\A_2(\AA_n)$ share the same largest dimension for a nonzero homology group
if and only if~$m_s(n-2) = m_a(n)$: 
   \[ [((n-2)+1)/2] - 2 = [n/2]-2 , \]
that is, if and only if
   \[ [(n-1)/2] = [n/2] . \]
This equality holds if and only if~$n$ is odd.
Therefore, modulo the above Problem, for even~$n$ we can proceed as in Corollary \ref{coroA8},
and deduce that~(H-QC) holds for~$G$, if it satisfies~(H1) and has a component of type~$\AA_{n}$.

\bigskip

We conclude this section with an application of Theorem~\ref{theoremNormalCaseReduction},
to establish~(H-QC) for certain groups with~$\AA_5$-components.
We will establish Property~E(2), 
and use that to directly verify the hypothesis of Theorem~\ref{theoremNormalCaseReduction}---rather than
using the inductive procedure of Proposition~\ref{propositionPropertiesEM}. 
It would be interesting to investigate if this kind of proof can show~(H-QC) in groups with~$\AA_n$ components
for larger odd~$n$.

\begin{example}
Take~$p := 2$.
Let~$G := (\AA_5\times\AA_5) \rtimes (E \times R)$,
where~$E \groupiso C_2$ acts diagonally by outer involutions on both copies of~$\AA_5$,
and~$R \groupiso C_2$ interchanges the components.

Thus our~$G$-orbit has length~$t = 2$.
Note that the kernel on the components is~$H = (\AA_5\times\AA_5) \rtimes E$.
We will apply Theorem~\ref{theoremNormalCaseReduction} to show~(H-QC) for~$G$ for~$p = 2$; 
that is, we will show that~$\psi_H$ is not the zero map in homology.
We will accomplish this by first establishing Property~E(2) for the~$\varphi$-maps corresponding to the components.

\bigskip

Let~$L_1$ and~$L_2$ be the copies of~$\AA_5$ which are components of~$G$, and  set~$N := L_1 L_2 = F^*(G)$.
Note that~$G$ interchanges~$L_1$ and~$L_2$, and that~$O_2(G) = 1 = O_{2'}(G)$.
With the notation of Definition~\ref{defn:CalAi}, we have:
  \[ C_0(H) = C_H(L_1 L_2) = 1 , \quad \quad C_1(H) = C_H(L_2) = L_1 , \quad \quad C_2(H) = H .\]
As promised above, we will first establish Property~E(2) for the pairs~$(L_i, C_i(H))$;
in fact we will show that~$\varphi_i$ induces an epimorphism in {\em all\/} homology groups,
for the relevant values~$1 \leq i \leq t = 2$.

\bigskip

\noindent
For~$i = 1$:
Using~$C_0(H) = 1$ and~$C_1(H) = L_1$ above, we see that~$\varphi_1$ maps~$\A_2(L_1)$ to:

\centerline{
   $\A_2(1) * \A_1 = \emptyset * \A_{L_1,L_1} = \A_2(L_1)$.
            }

\noindent
Thus~$\varphi_1$ is the identity map, and in particular induces an epimorphism in all homology groups.

\bigskip

\noindent
For~$i = 2$, the proof will be much lengthier:

This time using~$C_1(H) = L_1$ and~$C_2(H) = H$,
we see that~$\varphi_2$ maps~$\A_2(H)$ to~$\A_2(L_1) * \A_2$, where~$\A_2 = \A_{H,L_2} \cong \A_2(\SS_5)$.
In view of~Lemma~\ref{lemmaPOuterPreimage}, since we have~$C_H(L_2) = L_1$,
we can regard~$\A_2$ as the set~$\{ L_1 A \tq A \in \A_2(H),\ C_A(L_2) = 1 \}$; 
where~$L_1 A$ is a coset of~$L_1$ in the subset~$\A_2$ of the quotient~$H/C_H(L_2) = H/L_1$. 
We want~$\varphi_2$ to induce an epimorphism in homology.

In fact, we will prove that~$\varphi_2$ is even a~$2$-equivalence:
namely an epimorphism in degree~$2$, but a homology isomorphism in smaller degrees.
For this, we will apply the variant of Quillen's fiber lemma which we gave as Proposition~\ref{variantQuillenFiber}.
Set:

\centerline{
    $Y := \A_2(L_1) * \A_2$, 
            }

\noindent
and note that the join~$Y$ is~$2$-dimensional.
In view of this proposition, it is enough to show, for all~$y \in Y$,
that~$W_y := \varphi_2^{-1}( Y_{\leq y} ) * Y_{>y}$ is~$1$-connected.

Assume first the case where~$y \in \A_2(L_1)$.
Here we see~$\varphi_2^{-1}( Y_{\leq y} ) = \A_2(L_1)_{\leq y}$ is contractible;
so that~$W_y$ is also contractible, and in particular is~$1$-connected.

So we may assume instead the other case, where~$y \in \A_2$.
We saw above that we may regard~$y$ as~$L_1 F_0$, for some~$F_0 \in \A_2(H)$ such that~$C_{F_0}(L_2) = F_0 \cap L_1 = 1$
(since $C_H(L_2) = L_1$).
Therefore:
  \[ \varphi_2^{-1}( Y_{\leq y} ) = \A_2(L_1)\ \cup\ \{\ F \in \A_2(H) - \A_2(L_1) \tq L_1F \leq L_1 F_0\ \}
                                  = \A_2(L_1 F_0) .
    \]
Assume first the case where~$F_{01} := F_0 \cap L_1 C_H(L_1) = F_0 \cap L_1 L_2 > 1$. 
Then~$L_1 F_0$ contains the projections~$F_i$ of~$F_{01}$ on the~$L_i$---with~$F_2 > 1$ since~$F_0 \cap L_1 = 1$. 
We see that~$F_2 \leq Z(L_1 F_0)$, so that~$1 < F_2 \leq O_2(L_1 F_0)$;
thus~$\A_2(L_0 F_1)$ is contractible, and hence so is~$W_y$. 

Hence we may assume the remaining case where~$F_0 \cap L_1 C_H(L_1) = 1$---so
that~$F_0$ acts faithfully on~$L_1$ by outer automorphisms.
Then~$F_0$ has order~$2$, and~$\A_2(L_1 F_0) = \A_2(\SS_5)$ is~$0$-connected (i.e.~connected).%
\footnote{
  Notice this is our first use of the specific~$\AA_5$-structure in our hypothesis.
          }
On the other hand, $Y_{>y} = (\A_2)_{>y}$.
Now since $H/C_H(L_2) = H/L_1 \cong L_2 E \cong \SS_5$, 
we can regard~$\A_2 = \A_{H,L_2}$ in the alternative form of~$\A_2(L_2 E) = \A_2(\SS_5)$;
and in this representation, our element~$y \in \A_2$
is an elementary abelian~$\tilde{F}_0$ of order~$2$, inducing outer automorphisms on~$L_2$.
Hence:
  $$ \A_2(\SS_5)_{>y} = \N_{\SS_5}(L_2)_{>\tilde{F}_0} \simeq \A_2 \bigl( C_{L_2}(\tilde{F}_0) \bigr) = \A_2(\SS_3) , $$
which is~$(-1)$-connected (i.e.~nonempty).
Therefore, we have that~$W_y \simeq \A_2(\SS_5) * \A_2(\SS_3)$ is in fact~$1$-connected,
by the classical join property mentioned after~(\ref{equationJoinHomology})---with~$0,-1$ in the roles of~``$n,m$''.
(Moreover, $W_y$ is homotopy equivalent
to a bouquet of:
  \[ \dim H_1 \bigl( \A_2(\SS_5) \bigr)\ \cdot\ \dim \tilde{H}_0 \bigl( \A_2(\SS_3) \bigr) = 16 \cdot 2 = 32 \]
spheres of dimension~$2$.)

\bigskip
   
We have now shown, for all~$y \in Y = \A_2(L_1) * \A_2$,
that~$W_y$ is~$1$-connected (and indeed often contractible). 
By Proposition~\ref{variantQuillenFiber}, we see that~$\varphi_2$ is a~$2$-equivalence;
and in particular, it is an epimorphism in all homology groups of degree at most~$2$.
Since~$Y$ has dimension~$2$, its homology groups vanish in degree~$>2$;
so in fact~$\varphi_2$ is an epimorphism in {\em all\/} the homology groups.
This fact for~$\varphi_2$ in particular completes the proof of Property~E(2) for all (i.e.~both)~$\varphi_i$. 

Now to complete the hypothesis of Theorem~\ref{theoremNormalCaseReduction},
we want~$\tilde{H}_2(\ \A_2 \bigl( C_H(N) \bigr) * \A_1 * \A_2\ ) \neq 0$.
In this case, $C_H(N) = 1$ since there are no other components outside our orbit;
while~$\A_1 = \A_2(L_1)$ and~$\A_2 = \A_2(\SS_5)$.
Hence using~(\ref{equationJoinHomology}):

\centerline{
$  \tilde{H}_2(\ \A_2 \bigl( C_H(N) \bigr) * \A_1* \A_2\ )
 = \tilde{H}_2 \bigl(\ \A_2(\AA_5)  * \A_2(\SS_5)\ \bigl) 
 = \tilde{H}_0 \bigl( \A_2(\AA_5) \bigr) \otimes \tilde{H}_1 \bigl( \A_2(\SS_5) \bigr)
 \neq 0$ ;
            }

\noindent
for~$\neq 0$ follows by the almost-simple case in Theorem~\ref{theoremAK},
while the more detailed expression follows by direct computation---since $\A_2(\AA_5)$ is a wedge of~$4$~$0$-spheres,
and~$\A_2(\SS_5)$ is a wedge of~$16$~$1$-spheres.
Hence applying the Theorem, (H-QC) also holds for~$G$.
(Along the way, the computation has shown that~(H-QC) holds for~$H$ as well.)

\bigskip

\noindent
Finally, we approach the proof using the viewpoint of Segev's theorems: 

First note that Segev's Theorems~2 and~3 (which we stated as Theorem~\ref{theoremSegevOriginal}) cannot be applied here:
We have that~$H$ is the kernel on components, and~$H > F^*(H) = \AA_5\times\AA_5$---so that we can't use Theorem~3.
Moreover, if~$L$ is a component of~$H$, then~$\Aut_H(L) = \SS_5$;
so~$\A_2(L) \to \A_2 \bigl( \Aut_H(L) \bigr)$ is the zero map in homology:
For~$\A_2(L)$ has~$5$ connected components, each contractible, 
and hence has reduced homology given by~$4$-dimensional~$\tilde{H}_0$;
while~$\A_2 \bigl( \Aut_H(L) \bigr) = \A_2(\SS_5)$ is connected, and so has zero in~$\tilde{H}_0$. 
Hence we don't get the nonzero map required for the hypothesis of Theorem~2.

Nevertheless, it is possible to get~(H-QC) here, by applying Theorem~1 of~\cite{Segev}:
Note first that~$\D_2(H) = \A_2(H) - \bigl( \A_2(L_1) \cup \A_2(L_2) \bigr)$.
And we can conclude, using computations in GAP, that $\D_2(H) \to \A_2(H)$ is not surjective in homology:
since~$H_2 \bigl( \A_2(H) \bigr)$ has dimension~$384$, while~$H_2 \bigl( \D_2(H) \bigr)$ has dimension only~$36$.
Hence~$\tilde{H}_* \bigl (\A_2(G) \bigr) \neq 0$ by Theorem~1 of~\cite{Segev}.
\donerk
\end{example}

\end{document}